\documentclass[12pt,a4paper]{amsart}
\usepackage{amsfonts,amsmath,amsthm,amssymb, amscd}
\usepackage[top=2cm, bottom=4cm, left=3cm, right=1cm]{geometry}

\usepackage{xcolor}

\newcommand{\vn}{\varnothing}

 \DeclareMathOperator{\im}{Im}

\DeclareMathOperator{\Ker}{Ker}
\DeclareMathOperator{\charak}{char}
\DeclareMathOperator{\Der}{Der}

\DeclareMathOperator{\IDer}{IDer}

%
\numberwithin{equation}{section}
\newtheorem{theorem}{Theorem}
\newtheorem{lemma}[theorem]{Lemma}
\newtheorem{remark}[theorem]{Remark}
\newtheorem{corollary}[theorem]{Corollary}
\newtheorem{proposition}[theorem]{Proposition}

\begin{document}

\title[Engel condition]{Torsion subgroups, solvability and the Engel condition in associative rings}

\author[Artemovych,  Bovdi]{Orest Artemovych,  Victor Bovdi}
\address{Department of Applied   Mathematics,  Cracow University of Technology, Cracow,  Poland}
\email{artemo@usk.pk.edu.pl}

\address{UAEU, United Arab Emirates}
\email{vbovdi@gmail.com}

\keywords{Engel ring,   Lie nilpotent ring, Lie solvable ring, torsion group, Engel group,  nilpotent group, locally finite ring, Artinian ring, Noetherian ring, Goldie ring, local ring, derivation, solvable group, semiprime ring, $\pi$-regular ring}
\subjclass{20C05, 16S34,   20F45, 20F19, 16W25}

\maketitle


\begin{abstract}
The connections between the properties of associative rings  that are Lie-solvable (Engel, n-Engel, locally finite, respectively) and the properties of their adjoin  subgroups are investigated.
\end{abstract}


\section{ Introduction}


Let $R=(R,+,\cdot )$ be an  associative ring (not necessary with
unity). The set of all elements of $R$ forms a semigroup with
respect  to the circle operation ``$\circ$"{} defined by the rule
$a\circ b=a+b+a\cdot b$ for each  $a,b\in R$. The set
\[
R^{\circ}=\{
a\in R\mid a\circ b=0=b\circ a\quad  \text{for some}\quad  b\in R\}
\]
is a group (so-called {\it the adjoint group}\ of $R$). If $R$ has
unity and $U(R)$ is the unit group of $R$, then 
\[
R^{\circ}\ni
a\mapsto 1+a\in U(R)
\]
 is a group isomorphism. If $R=R^\circ$, then  $R$ is called {\it radical}.

We study properties of associative rings and their adjoint groups which are connected  with solvability,  Engel conditions and periodicity.

We always assume that $p$ is a prime number, $\mathbb N$ is the set of positive integers,  $\mathbb F$ is a field,
${\mathbb Z}_n$ is the ring of integers modulo $n$. Let $n,k\in\mathbb{N}$, $m\in\mathbb{Z}$ and let  $x,g\in R$. We introduce the following notation:

$\mu_n(x)=\sum\limits_{k=1}^n\binom{n}{k}x^k$,

$[x,{}_1g]=[x,g]=x\cdot g-g\cdot x,\qquad  [x,{}_{n+1}g]=[[x,{}_{n}g],g]$,

$C_R(g)=\{ x\in R\mid xg=gx\}$  is the centralizer of $g$ in $R$,

 $J(R)$ is  the Jacobson radical of $R$,

 $N(R)$  is the set of all nilpotent elements  of $R$,

 $g^{(m)}$ is the $m$-th power of $g$ in $R^\circ$,

 $F(R)=\{ x\in R\mid x\ \text{is of finite order in}\ R^+\}$ is the torsion part of the additive group $R^+$,


 $\charak R$ is the characteristic of $R$,

 $Z(R)$ is the center of $R$,

 ${\mathbb P}(R)$ is the prime radical of $R$ (i.e., the intersection of all prime ideals of
$R$),

$N^*(R)$ is the nil radical of $R$ (i.e., the sum of all nil ideals),

$N_r(R)$ is  the sum of all nil right ideals of $R$ (moreover, $N_r(R)$ is  the sum of all nil left ideals of $R$ and, therefore, $N_r(R)$ is a two-sided ideal of $R$),

$[A,B]$ is the additive subgroup  of $R^+$ generated by all $[a,b]$,
where $a\in A, b\in B$ and $A,B\subseteq R$,

$C(R)$  is the commutator ideal of $R$ (i.e., an ideal of $R$
generated by all $[g,x]$),

$D=\Der R$ is the set of all derivations of $R$,

$\Delta (R)$  is the  ideal of $R$ generated by all $\delta (R)$, where $\vn \neq \Delta \subseteq \Der R$ and $\delta \in \Delta$,

$\gamma_1R=[R,R]$ and $\gamma_{n+1}R=[\gamma_{n}R,R]$,

$\delta_0R=R$ and $\delta_{n+1}R=[\delta_nR,\delta_nR]$,

${\langle X\rangle}_{\rm rg}$ is a   subring of $R$ generated
by $X\subseteq R$ (if $X=\vn$, then ${\langle X\rangle}_{\rm
rg}=0$).

If $\Delta =(\Delta, +[-,-])$ is a Lie ring, then
\[
\gamma_1\Delta:=\Delta, \ \ldots, \gamma_{k+1}\Delta:=[\gamma_k\Delta, \Delta ],\ldots  \qquad (k\in\mathbb{N}).
\]
and $\Delta^{(1)}:=\Delta,\ldots,\quad  \Delta^{(n+1)}:=[\Delta^{(n)},\Delta^{(n)}]$.

Let  $\tau (G)$ be the set of all torsion elements of a group $G$.
Recall that a ring $R$ is called:
\begin{itemize}
\item[$\bullet$] {\it nil}  if each $x\in R$ is nilpotent, i.e., there exists $n=n(x)\in\mathbb{N}$ such that $x^n=0$; if there exists  $n\in\mathbb{N}$ such that $x^n=0$ for any $x\in N(R)$, then $R$ is    {\it of bounded index of nilpotency} ({\it of bounded index} for short),

\item[$\bullet$] {\it local} if $R\ni 1$ and $R/J(R)$ is a simple ring,

\item[$\bullet$] {\it right Artinian} in case for each  ascending chain
\begin{equation*}
I_1\supseteq  I_2\supseteq  \cdots \supseteq  I_n\supseteq  \cdots
 \end{equation*}
of right ideals $I_j$ of $R$\ $(j=1,2,\ldots )$, there exists $n\in\mathbb{N}$ such that  $I_{n+1}=I_n$,

\item[$\bullet$] {\it right Noetherian} in case for each  descending chain
\begin{equation*}
I_1\subseteq  I_2\subseteq  \cdots \subseteq  I_n\subseteq  \cdots
 \end{equation*}
of right ideals $I_j$ of $R$\ $(j\in\mathbb{N})$, there exists $n\in\mathbb{N}$ such that $I_{n+1}=I_n$,

\item[$\bullet$] {\it semilocal} if $R/J(R)$ is a left
Artinian ring,

\item[$\bullet$] {\it right Goldie } if it has no infinite direct sum of left ideals and has the ascending chain condition on right annihilators,

\item[$\bullet$] {\it Lie nilpotent} of class $n$
if $n$ is a minimal positive integer such that $\gamma_{n+1}R=0$,

\item[$\bullet$] {\it locally nilpotent} if each  its finitely generated subring is nilpotent,

\item[$\bullet$] {\it locally Lie  nilpotent} if each   finitely generated subring of $R$ is Lie nilpotent,

\item[$\bullet$] {\it Lie soluble} of length at most $n$ if $\delta_nR=0$,

\item[$\bullet$] {\it Lie metabelian} if $\delta_2R=0$,

\item[$\bullet$] {\it Lie centrally metabelian} if $\delta_2R\subseteq Z(R)$,

\item[$\bullet$] {\it Engel} (or equivalently $R$ satisfies {\it  the Engel
condition}) if, for each $x,y\in R$, there exists $n=n(x,y)\in\mathbb{N}$ such that $[x,{}_ny]=0$,

\item[$\bullet$] {\it $n$-Engel} (or equivalently $R$ satisfies {\it  the $n$-Engel
condition} or $R$ is {\it bounded Engel}) if  $[x,{}_ny]=0$ for any $x,y\in R$,

\item[$\bullet$] {\it semiprime} if it has no nonzero nilpotent ideals,

\item[$\bullet$]  {\it prime} if the product of each two nonzero ideals  is nonzero,

\item[$\bullet$] {\it simple} if  $R^2\neq 0$ and   $0$, $R$ are the only ideals of   $R$,

\item[$\bullet$] {\it reduced} if $N(R)=0$,

\item[$\bullet$] {\it $2$-primal} if ${\mathbb P}(R)=N(R)$,

\item[$\bullet$] {\it right quasi-duo} if each maximal right ideal is two-sided,

\item[$\bullet$] {\it of stable range $1$} if $R\ni 1$ and, for each $a,b,x,y\in R$ with $ax+by=1$, there exists $h\in R$ such that $a+bh\in U(R)$,

\item[$\bullet$] {\it abelian} if its all idempotents are central,

\item[$\bullet$]  {\it   $\pi$-regular}  if, for each  $a\in R$, there exists $b\in R$ such that $a^n=a^nba^n$  for some $n\in\mathbb{N}$,
    \item[$\bullet$] {\it right weakly $\pi$-regular} if, for each $a\in R$, there exists  $n=n(a)\in {\mathbb N}$ such that $a^nR=(a^nR)^2$.
\end{itemize}

We shall use freely the following well known facts: Right Artinian rings are $\pi$-regular and $\pi$-regular rings are weakly $\pi$-regular. An associative ring $R$ is Lie nilpotent (Engel, respectively) if and only if  $R^L$ is nilpotent (Engel, respectively). Each (Lie or associative) locally nilpotent ring is Engel. Each  Lie nilpotent ring of nilpotency class $n$ is $n$-Engel. A radical ring $R$ is Lie nilpotent   if and only if its adjoint group $R^\circ$ is nilpotent \cite{Jennings47}.

In certain papers (see, for example,  \cite{Amberg_Sysak_III, Bokut_Lvov_Kharchenko, Sharma_Srivastava85, Zalesskii_Smirnov_82} and others) { by many authors was investigated properties of Lie solvable rings and its relations with    groups.  }  Each radical ring $R$ with solvable adjoint group $R^\circ$ is  Lie solvable \cite[Theorem A]{Amberg_Sysak_I}. As a consequence, if $R$ is a semilocal ring with the solvable unit group $U(R)$, then $R$ is Lie  solvable.

Each $2$-torsion-free Lie solvable ring  $R$ has a nilpotent ideal $I$ such that  $R/I$ is Lie centre-by-metabelian (and so $R^\circ$ is solvable) (see \cite{Zalesskii_Smirnov_82} and \cite{Sharma_Srivastava85}).  There exists a Lie center-metabelian (and so it is Lie solvable) total $(2\times 2)$-matrices ring $M_2(R)$ over  an infinite commutative domain $R$ of characteristic $2$, but its adjoint group $M_2(R)^\circ$ is non-solvable. The  group of units  $U(R)$ of a Lie metabelian unitary ring $R$ is metabelian (see \cite{Sharma_Srivastava92} and \cite[Theorem 1]{Krasilnikov}).

Our  result is  the following.

\begin{theorem} \label{ZAZ} Let  $R$ be  a  ring. The following statements hold:
\begin{itemize}
\item[$(i)$] if $R$ is Lie nilpotent, then the adjoint group $R^\circ$ is nilpotent-by-abelian;
\item[$(ii)$] {if $R^\circ$ is solvable-by-finite, then $[J(R),R]\subseteq {\mathbb P}(R)$ and $J(R)C(R)\subseteq {\mathbb P}(R)$;}
\item[$(iii)$] if $R$ is $2$-torsion-free Lie solvable, then:
\begin{itemize}
\item[$(a)$]
$C(R)\subseteq {\mathbb P}(R)=N(R)$ (i.e., $R$ is $2$-primal and right quasi-duo), $N(R)$ is the locally nilpotent ideal of $R$  and $R^\circ$ is (locally nilpotent)-by-abelian. Moreover, if $R$ has unity, then it is abelian;
\item[$(b)$] if $R^\circ$ is torsion, then it is locally  nilpotent;
\item[$(c)$] if $N(R)^+$ is torsion-free divisible, then $R^\circ$ is nilpotent-by-abelian.
\end{itemize}
\end{itemize}
\end{theorem}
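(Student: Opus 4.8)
The plan is to establish the four assertions more or less in the order they are stated, using as the main engine the known structural result that a $2$-torsion-free Lie solvable ring $R$ has a nilpotent ideal $I$ with $R/I$ Lie centre-by-metabelian (hence $(R/I)^\circ$ solvable), together with the results already quoted in the introduction. For part $(i)$, if $R$ is Lie nilpotent of class $n$, then $R^L$ is nilpotent, so $R$ is $2$-Engel modulo lower terms; the cleanest route is to observe that $\gamma_2 R = [R,R] = C(R)$ when $R$ is Lie nilpotent forces $C(R)$ to be a nilpotent ideal, and that $C(R)^\circ$ is a normal subgroup of $R^\circ$ on which the circle operation agrees with a nilpotent group law (by Jennings, since $C(R)$ is a Lie nilpotent radical ring), while $R^\circ / C(R)^\circ \cong (R/C(R))^\circ$ is abelian because $R/C(R)$ is commutative. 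Thus $R^\circ$ is nilpotent-by-abelian.

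For part $(ii)$ I would argue modulo ${\mathbb P}(R)$: passing to $\bar R = R/{\mathbb P}(R)$, which is semiprime, and noting $\overline{J(R)} \subseteq J(\bar R)$ up to the usual subtleties, it suffices to show that in a semiprime ring whose adjoint group is solvable-by-finite the Jacobson radical is central and kills the commutator ideal. One uses that a solvable-by-finite adjoint group, restricted to the radical subring $J(R)$ (which is radical, so $J(R) = J(R)^\circ$), gives $J(R)^\circ$ solvable-by-finite, hence $J(R)$ is Lie solvable by \cite[Theorem A]{Amberg_Sysak_I} applied after passing to a finite-index subgroup; then in a semiprime ring a Lie solvable ideal must be central, forcing $[J(R),R]=0$ in $\bar R$, i.e. $[J(R),R]\subseteq {\mathbb P}(R)$, and similarly $J(R)C(R)\subseteq {\mathbb P}(R)$ by a commutator manipulation. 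The finite-index step is where I expect to spend real effort.

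Part $(iii)(a)$ is the heart. Starting from the nilpotent ideal $I$ with $R/I$ Lie centre-by-metabelian, one first shows $N(R)$ is an ideal: in $R/I$, Lie centre-by-metabelian $2$-torsion-free rings are known to have $C(R/I)$ central-ish, and one deduces $R/I$ is $2$-primal, hence $N(R/I)={\mathbb P}(R/I)$ is an ideal; pulling back through the nilpotent $I$ keeps this an ideal, and ${\mathbb P}(R)=N(R)$ follows because ${\mathbb P}(R)$ is always nil here and every nilpotent element lands in it. That $C(R)\subseteq {\mathbb P}(R)$ is then the assertion that $R/{\mathbb P}(R)$ is commutative, which one gets from semiprimeness plus Lie solvability (a semiprime Lie solvable ring is commutative — this is essentially the $2$-torsion-free case of the Herstein-type results, provable via the centre-by-metabelian reduction and the fact that $\delta_2$ of a semiprime ring vanishes). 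Local nilpotence of $N(R)$ uses that $N(R)$ is a nil Lie solvable ideal: nil plus Lie solvable plus $2$-torsion-free gives locally nilpotent by a standard induction on the Lie solvable length (each quotient $\delta_k/\delta_{k+1}$ is a commutative nil ring, hence locally nilpotent, and extensions of locally nilpotent by locally nilpotent among nil rings of bounded "solvable" structure are locally nilpotent). Then $R^\circ$ is (locally nilpotent)-by-abelian: $N(R)^\circ$ is a normal, locally nilpotent subgroup of $R^\circ$ (Jennings plus local nilpotence), and the quotient is $(R/N(R))^\circ = (R/{\mathbb P}(R))^\circ$, abelian since that ring is commutative. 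Abelianness of $R$ when $1 \in R$ follows because idempotents of $R$ map to idempotents of the commutative ring $R/N(R)$ and lift uniquely modulo the nil ideal $N(R)$, forcing each idempotent to be central (a standard Peirce-decomposition argument: $[e,R] \subseteq N(R) \cap eR(1-e) = 0$).

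For $(iii)(b)$, if $R^\circ$ is torsion then so is $N(R)^\circ$ and $(R/N(R))^\circ$; the latter is the adjoint group of a commutative radical ring whose adjoint group is torsion, hence a torsion abelian group, and more to the point $R^\circ$ being torsion and (locally nilpotent)-by-abelian with the abelian quotient also "locally nilpotent" in the trivial sense gives that $R^\circ$ is locally nilpotent once one checks that a torsion (locally nilpotent)-by-abelian group in which the relevant actions are unipotent-like is locally nilpotent — concretely, any finitely generated subgroup of $R^\circ$ sits inside the adjoint group of a finitely generated subring $S$, and a finitely generated torsion Lie solvable ring that is $2$-torsion-free is nilpotent (it is a finite ring once one bounds things, or at least nilpotent), so $S^\circ$ is a finite nilpotent group. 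For $(iii)(c)$, if $N(R)^+$ is torsion-free divisible then $N(R)$ is a $\mathbb Q$-algebra that is nil and Lie solvable; such a ring is actually nilpotent (a nil $\mathbb Q$-algebra of finite Lie solvable length is nilpotent — the divisibility lets one run the Baker–Campbell–Hausdorff / symmetrization argument without denominators obstructing), so $N(R)^\circ$ is a nilpotent group, and combined with $(a)$ we get $R^\circ$ nilpotent-by-abelian.

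The main obstacle I anticipate is the passage from "Lie solvable and nil and $2$-torsion-free" to "locally nilpotent" in $(iii)(a)$, and its sharper cousin "nilpotent" under the divisibility hypothesis in $(iii)(c)$: the inductive step up the derived Lie series requires controlling how a locally nilpotent ideal sits inside a nil ring modulo a commutative nil quotient, and making the extension argument go through cleanly (rather than just for finitely generated pieces) is the delicate point. Everything else is assembling the quoted theorems (\cite{Jennings47}, \cite{Amberg_Sysak_I}, \cite{Zalesskii_Smirnov_82}, \cite{Sharma_Srivastava85}) with routine Peirce-decomposition and normal-subgroup bookkeeping in $R^\circ$.
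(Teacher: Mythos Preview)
Your overall architecture is right, but there are two places where the argument as written does not go through, and the paper handles them by a different mechanism.

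\textbf{Part $(ii)$.} Your plan is to restrict the solvable-by-finite hypothesis to $J(R)^\circ$, invoke \cite{Amberg_Sysak_I} to get $J(R)$ Lie solvable, and then argue that a Lie solvable ideal in a semiprime ring is central. This fails on two counts. First, part $(ii)$ carries \emph{no} $2$-torsion-free hypothesis, so the Herstein machinery you would need for ``Lie solvable ideal $\Rightarrow$ central in semiprime'' is unavailable; over characteristic $2$ that implication is simply false (think of $M_2$ over an infinite field of characteristic $2$). Second, \cite{Amberg_Sysak_I} treats solvable adjoint groups, not solvable-by-finite ones, and your acknowledged ``finite-index step'' is not a mere technicality --- it is the whole difficulty. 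The paper does not attempt any such reduction: it quotes Catino--Miccoli--Sysak \cite{Catino_Miccoli_Sysak} directly. Their Theorem~A says that for a prime ring $R$ with $R^\circ$ solvable-by-finite and $J(R)\neq 0$ one has $J(R)\subseteq Z(R)$; applied to each $R/P$ this gives $[J(R),R]\subseteq P$ for every prime $P$, hence $[J(R),R]\subseteq{\mathbb P}(R)$. Their Theorem~B (for semiprime rings) then gives $J(\bar R)$ commutative, from which $J(\bar R)\,C(\bar R)=0$ follows by a short nilpotence argument. You should either locate and use this reference or prove its content; your current route will not reach it.

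\textbf{Parts $(i)$ and $(iii)(c)$.} In both places you assert associative nilpotence where only Lie nilpotence is available. In $(i)$ you write ``$\gamma_2 R=[R,R]=C(R)$ \ldots forces $C(R)$ to be a nilpotent ideal'': the first equality is false ($[R,R]$ is an additive subgroup, $C(R)$ the ideal it generates), and Lie nilpotence of $R$ does not make $C(R)$ nilpotent as a ring. What you actually need, and what the paper uses, is that $R$ Lie nilpotent $\Rightarrow$ $R$ is $n$-Engel $\Rightarrow$ $C(R)\subseteq{\mathbb P}(R)=N(R)$ (Corollary~\ref{PP}); then ${\mathbb P}(R)$ is a nil, hence radical, Lie nilpotent ring and Jennings applies to ${\mathbb P}(R)^\circ$, while $(R/{\mathbb P}(R))^\circ$ is abelian. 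Your Jennings step is correct once you replace ``nilpotent'' by ``nil'' and justify it. In $(iii)(c)$ the claim ``a nil $\mathbb Q$-algebra of finite Lie solvable length is nilpotent'' via Baker--Campbell--Hausdorff is not established (and likely false without further finiteness). The paper argues instead: $N(R)$ is locally nilpotent (Lemma~\ref{LSLN}, via the Sharma--Srivastava/Zalesski\u{\i}--Smirnov PI and Kaplansky--Shirshov), hence Engel as a Lie ring; over $\mathbb Q$ an Engel associative algebra is Lie nilpotent by Riley \cite{Riley98}, and then Jennings gives $N(R)^\circ$ nilpotent. Finally, for $(iii)(a)$ note that the paper's route to $C(R)\subseteq{\mathbb P}(R)$ is much shorter than yours: for each prime $P$ the ring $R/P$ is prime, $2$-torsion-free (Lemma~\ref{SYS}) and Lie solvable, and an iterated application of Herstein's lemma (an abelian Lie ideal of a $2$-torsion-free semiprime ring is central) walks the derived series down to $R/P$ commutative.
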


In the local case we obtain the following.

\begin{theorem} \label{LSS} Let $R$ be a local ring. The following statements hold:
\begin{itemize} \item[$(i)$] if the unit group $U(R)$ is solvable, then:
\begin{itemize}
\item[$(a)$] $C(R)\subseteq L(R)=N(R)$ is a locally nilpotent ideal of $R$, where $L(R)$ is the Levitzki radical of $R$;
\item[$(b)$] $R$ is Lie solvable;
\end{itemize}
\item[$(ii)$] if $R$ is a Lie solvable $\mathbb Q$-algebra, then $R^\circ$ is nilpotent-by-abelian.
\end{itemize}
\end{theorem}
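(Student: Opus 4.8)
The plan is to derive both items from Theorem~\ref{ZAZ} and from the consequence of \cite[Theorem~A]{Amberg_Sysak_I} recorded in the introduction (a semilocal ring with solvable unit group is Lie solvable), the only genuinely new input being a classical fact about division rings. I will use throughout that for a ring with $1$ the assignment $a\mapsto 1+a$ is a group isomorphism $R^{\circ}\to U(R)$, so in part~$(i)$ the hypotheses ``$U(R)$ solvable'' and ``$R^{\circ}$ solvable'' coincide. Part~$(i)(b)$ is then immediate: a local ring is semilocal, since its residue division ring $\overline R:=R/J(R)$ is Artinian, so the quoted consequence of \cite{Amberg_Sysak_I} gives that $R$ is Lie solvable.

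For part~$(i)(a)$ I would first show that $\overline R$ is a field. Its multiplicative group $U(\overline R)$ is a homomorphic image of $U(R)$, hence solvable, and a division ring with solvable multiplicative group is commutative (a classical fact; alternatively, $\overline R$ is Lie solvable, being a homomorphic image of the Lie solvable ring $R$ of $(i)(b)$, and a Lie solvable division ring is a field). Consequently $[R,R]\subseteq J(R)$, and therefore $C(R)\subseteq J(R)$. Since $R^{\circ}\cong U(R)$ is solvable it is in particular solvable-by-finite, so Theorem~\ref{ZAZ}(ii) yields $J(R)\,C(R)\subseteq{\mathbb P}(R)$; combining this with $C(R)\subseteq J(R)$ gives $C(R)^{2}\subseteq J(R)\,C(R)\subseteq{\mathbb P}(R)$. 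As $R/{\mathbb P}(R)$ is semiprime, the ideal $(C(R)+{\mathbb P}(R))/{\mathbb P}(R)$ has square zero and hence vanishes, i.e.\ $C(R)\subseteq{\mathbb P}(R)$; thus $R/{\mathbb P}(R)$ is commutative and semiprime, so reduced, whence $N(R)\subseteq{\mathbb P}(R)$, and since ${\mathbb P}(R)$ is nil we conclude $N(R)={\mathbb P}(R)$. Finally the chain ${\mathbb P}(R)\subseteq L(R)\subseteq N(R)={\mathbb P}(R)$ forces $L(R)=N(R)={\mathbb P}(R)$, which is a locally nilpotent ideal, and $C(R)\subseteq{\mathbb P}(R)=L(R)=N(R)$; this proves $(i)(a)$.

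For part~$(ii)$, being a Lie solvable $\mathbb Q$-algebra $R$ is in particular $2$-torsion-free, so Theorem~\ref{ZAZ}(iii)(a) applies and $N(R)$ is an ideal of $R$. Its additive group $N(R)^{+}$ is torsion-free (a subgroup of the $\mathbb Q$-vector space $R^{+}$) and divisible, since for $x\in N(R)$ and $n\in\mathbb N$ the element $\tfrac1n x=(\tfrac1n\cdot 1)x$ is again nilpotent and satisfies $n\cdot(\tfrac1n x)=x$; hence $N(R)^{+}$ is torsion-free divisible. Theorem~\ref{ZAZ}(iii)(c) now gives that $R^{\circ}$ is nilpotent-by-abelian.

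The step I expect to demand the most care is the reduction of $\overline R$ to a field, that is, the input that a division ring with solvable multiplicative group (equivalently, a Lie solvable division ring) is commutative; everything else is routine bookkeeping with the isomorphism $R^{\circ}\cong U(R)$, with Theorem~\ref{ZAZ}, with the elementary facts that ${\mathbb P}(R)$ is nil and locally nilpotent and that the Levitzki radical $L(R)$ is the largest locally nilpotent ideal and is nil, with the observations that a semiprime ring has no nonzero square-zero ideal and a commutative semiprime ring is reduced, and with the quoted consequence of \cite{Amberg_Sysak_I}. It should also be noted that ``local'' must be taken to mean that $R/J(R)$ is a division ring (so that $U(\overline R)$ is its group of nonzero elements and $R$ is semilocal); this is what the argument uses.
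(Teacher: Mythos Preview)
Your argument is correct. For part~$(ii)$ it coincides with the paper's proof: both rely on the fact (packaged in Theorem~\ref{ZAZ}$(iii)$) that in a $2$-torsion-free Lie solvable ring $N(R)$ is a locally nilpotent ideal, and then use that a locally nilpotent $\mathbb Q$-algebra is Lie nilpotent so that $N(R)^\circ$ is nilpotent.

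For part~$(i)$ your route genuinely differs from the paper's. The paper argues externally via \cite{Amberg_Sysak_III}: from solvability of $J(R)^\circ$ it gets $J(R)$ (hence $R$) Lie solvable, and then invokes the structure theorem of \cite{Amberg_Sysak_III} to show $J(R/L(R))$ is commutative, from which $C(R/L(R))^3=0$ and so $C(R)\subseteq L(R)=N(R)$. You instead stay internal to the paper: the consequence of \cite{Amberg_Sysak_I} quoted in the introduction gives $(i)(b)$ directly, and for $(i)(a)$ you combine the classical fact that a division ring with solvable multiplicative group is a field with Theorem~\ref{ZAZ}$(ii)$ to force $C(R)^2\subseteq J(R)C(R)\subseteq{\mathbb P}(R)$, hence $C(R)\subseteq{\mathbb P}(R)$ by semiprimeness, after which the chain ${\mathbb P}(R)\subseteq L(R)\subseteq N(R)={\mathbb P}(R)$ collapses. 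Your approach has the advantage of reusing results already established in the paper and avoiding the finer structural input from \cite{Amberg_Sysak_III}; the paper's approach, on the other hand, does not need Theorem~\ref{ZAZ}$(ii)$ (which rests on \cite{Catino_Miccoli_Sysak}).

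One small caveat: your ``alternative'' justification that a Lie solvable division ring is a field is only established in the paper under a $2$-torsion-free hypothesis (see the proof of Theorem~\ref{ZAZ}$(iii)(a)$, which uses \cite[Lemma~1]{Herstein70}); your primary justification via solvability of $U(\overline R)$ is the safe one, and is exactly what the paper uses (citing \cite[Theorem~2]{Zalesskii65}). Your closing remark that ``local'' must mean $R/J(R)$ is a division ring is well taken and matches how the paper's own proof proceeds.
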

Recall that the Levitzki radical $L(R)$ of $R$ is its unique maximal locally nilpotent ideal.

An additive map $\delta: R\to R$ is called {\it a derivation} of $R$ if $\delta (ab)=\delta (a)b+a\delta (b)$ for all $a,b\in R$. The set $D$ of all derivations of $R$ is a Lie ring.  Properties of  a ring $R$ which induced by the Engel condition    of the  derivation ring $D$ gives the following.

\begin{theorem}\label{GGX} Let $R$ be a ring. If  $\vn \neq \Delta \subseteq D$,  then
\begin{itemize}
\item[$(i)$]   if $\Delta$ is a nilpotent Lie ring and $R\Delta \subseteq \Delta$ modulo $C(R)$, then $\Delta (R)^m\subseteq C(R)$ (in particular, if\ $D$ is Lie nilpotent, then $D(R)^m\subseteq C(R)$) for some $m\in \mathbb{N}$;
\item[$(ii)$]  {
if $\Delta$ is a nilpotent  Lie ring of the nilpotent length $n$ and $R\Delta \subseteq \Delta$ modulo $C(R)$, then $d^{n}(R)d(R)\subseteq C(R)$ for some $d\in \Delta^{(n-1)}$;
  \item[$(iii)$]  if $\Delta$ is an Engel Lie ring and  $R\Delta \subseteq \Delta$ modulo $C(R)$, then, for each $a\in R$ and $\delta \in \Delta$,  there exists  $n=n(\delta, a)\in {\mathbb N}$ such that  $\delta^n(a)\delta (R)\subseteq C(R)$.
      }
\end{itemize}
\end{theorem}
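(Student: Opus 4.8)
The plan is to treat $(\mathrm{i})$--$(\mathrm{iii})$ through a single reduction to the commutative ring $A:=R/C(R)$. Since $C(R)$ is generated by commutators, every derivation of $R$ maps $C(R)$ into itself (as $\delta[a,b]=[\delta a,b]+[a,\delta b]$), hence descends to a derivation of $A$, and the descent $\delta\mapsto\bar\delta$ is a Lie ring homomorphism; therefore $\bar\Delta:=\{\bar\delta:\delta\in\Delta\}$ is a homomorphic image of $\Delta$ and inherits, respectively, nilpotency, nilpotency of class $n$, or the Engel condition. For $r\in R$ and $\delta\in\Delta$ the operator $x\mapsto r\delta(x)$ is a derivation modulo $C(R)$ (its Leibniz defect equals $[r,x]\delta(y)\in C(R)$) and descends to the derivation $\bar r\bar\delta$, $\bar x\mapsto\bar r\,\bar\delta(\bar x)$, of $A$; the hypothesis ``$R\Delta\subseteq\Delta$ modulo $C(R)$'' is precisely the assertion that $\bar\Delta$ is an $A$-submodule of $\Der A$. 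Writing $N:=\bar\Delta(A)$, the module property makes $N=\sum_{\delta\in\Delta}\bar\delta(A)$ an ideal of $A$ with $\bar\delta(N)\subseteq N$; and since passage to $R/C(R)$ carries the $m$-th power of an ideal to the $m$-th power of its image, the three conclusions become $(\mathrm{i})$ $N^m=0$, $(\mathrm{ii})$ $\bar d^{\,n}(A)\,\bar d(A)=0$ for some $d\in\Delta^{(n-1)}$, and $(\mathrm{iii})$ $\bar\delta^{\,n}(\bar a)\,\bar\delta(A)=0$ for a suitable $n=n(\delta,a)$.

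The computational engine is the identity $[\bar\delta,e\bar\delta]=\bar\delta(e)\bar\delta$ in $\Der A$ for $e\in A$, a one-line check using commutativity of $A$. For $(\mathrm{iii})$: given $a\in R$ and $\delta\in\Delta$, the module hypothesis gives $\bar a\bar\delta\in\bar\Delta$, so the Engel condition in $\bar\Delta$ yields $n$ with $[\bar a\bar\delta,{}_n\bar\delta]=0$; iterating the identity gives $[\bar a\bar\delta,{}_k\bar\delta]=(-1)^k\bar\delta^{\,k}(\bar a)\bar\delta$, whence $\bar\delta^{\,n}(\bar a)\bar\delta=0$ as an operator, i.e. $\bar\delta^{\,n}(\bar a)\bar\delta(A)=0$, which unwinds to $\delta^{\,n}(a)\delta(R)\subseteq C(R)$. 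For $(\mathrm{ii})$, reading ``nilpotent length $n$'' as $\gamma_{n+1}\Delta=0$ (so $\gamma_{n+1}\bar\Delta=0$): for any $d\in\Delta$ and every $e\in A$ one has $e\bar d\in\bar\Delta$, so $\bar d(e)\bar d=[\bar d,e\bar d]\in\gamma_2\bar\Delta$ and, inductively, $\bar d^{\,k}(e)\bar d\in\gamma_{k+1}\bar\Delta$; at $k=n$ this forces $\bar d^{\,n}(e)\bar d=0$ for all $e$, i.e. $\bar d^{\,n}(A)\bar d(A)=0$, which holds for every $d$, in particular for $d\in\Delta^{(n-1)}$. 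Thus $(\mathrm{ii})$ and $(\mathrm{iii})$ fall out once the reduction is in place.

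Part $(\mathrm{i})$ is the substantial one: after the reduction it asks that the ideal $N$ of the commutative ring $A$ be nilpotent, given that $\bar\Delta$ is a nilpotent Lie ring and an $A$-submodule of $\Der A$ (the parenthetical case $\Delta=D$ being covered once one checks $RD\subseteq D$ modulo $C(R)$). I would induct on the nilpotency class $n$ of $\bar\Delta$. The base case $n=1$ is quick: $\bar\Delta$ abelian makes $[e\bar\delta_1,\bar\delta_2]=e[\bar\delta_1,\bar\delta_2]-\bar\delta_2(e)\bar\delta_1=-\bar\delta_2(e)\bar\delta_1$ vanish, so $\bar\delta_2(e)\bar\delta_1(A)=0$ for all $\bar\delta_1,\bar\delta_2,e$, hence $N^2=0$. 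For the inductive step the tools I would marshal are: $N$ and its powers are $\bar\Delta$-invariant ideals; $\bar\Delta$ induces on $A/N$ an $A/N$-module on which the Lie bracket is $A/N$-bilinear, because $[a\bar\delta_1,b\bar\delta_2]\equiv ab[\bar\delta_1,\bar\delta_2]\pmod{N\bar\Delta}$; and the operator estimate from $(\mathrm{ii})$, which already gives $\bar\delta^{\,n}(A)\bar\delta(A)=0$ for every $\delta$. The aim is to build from these a finite $\bar\Delta$-stable filtration of $A$ with annihilated successive quotients, recognising that $\bar\Delta/\gamma_n\bar\Delta$, of class $\le n-1$, acts by derivations on a suitable quotient ring of $A$. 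I expect the crux --- and the main obstacle --- to be exactly the closing of this induction: locating a filtration of $A$ that is simultaneously $\bar\Delta$-invariant and compatible with the intertwined $A$-module and Lie structures on $\bar\Delta$, and thereby bounding $m$ in terms of $n$ (in the examples I have checked one has $N^{n+1}=0$).
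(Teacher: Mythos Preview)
Your reduction to $A=R/C(R)$ and your treatment of parts $(\mathrm{ii})$ and $(\mathrm{iii})$ are correct and coincide with the paper's argument; the only difference is organizational (the paper handles the commutative case first and reduces afterwards, you reduce first), and your observation that $(\mathrm{ii})$ in fact holds for \emph{every} $d\in\Delta$, not only for $d\in\Delta^{(n-1)}$, is accurate.

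The gap is in part $(\mathrm{i})$: you correctly identify the inductive step as the crux, but you have not closed it, and the tools you propose (the lower central series $\gamma_n\bar\Delta$, a filtration of $A$ by $\bar\Delta$-stable ideals built from the estimate in $(\mathrm{ii})$) do not obviously lead anywhere --- $\gamma_n\bar\Delta$ has no reason to act trivially on any natural quotient of $A$. The paper's missing idea is to run the induction along the \emph{upper} central series instead. Your own base-case computation already contains it: for $\delta\in Z(\bar\Delta)$ and any $d\in\bar\Delta$, $a\in A$, the identity $0=[\delta,ad]=\delta(a)d$ shows that $\delta(A)\cdot d(A)=0$. Hence the ideal $I_1$ of $A$ generated by $\{\mu(A):\mu\in Z(\bar\Delta)\}$ satisfies $I_1^2=0$ and is $\bar\Delta$-stable (because $d\mu=\mu d$). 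Passing to $B:=A/I_1$, every $\mu\in Z(\bar\Delta)$ induces the zero derivation of $B$, so the image of $\bar\Delta$ in $\Der B$ is a quotient of $\bar\Delta/Z(\bar\Delta)$ and has nilpotency class at most $n-1$; it is still a $B$-submodule. By induction its image-ideal in $B$ is nilpotent, say of exponent $m'$; pulling back gives $N^{m'}\subseteq I_1$, hence $N^{2m'}=0$. This is exactly the ``finite $\bar\Delta$-stable filtration'' you were looking for, but produced by the center rather than by $\gamma_n$.
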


If the unit group $U(R)$ of a semilocal ring $R$ is $m$-Engel, then $U(R)$ is locally nilpotent and, furthermore, $R$ is $n$-Engel provided that $R$ is generated by $U(R)$ (bibliography in this way see in \cite{Amberg_Sysak_IV, Picelli}). Moreover,  a local ring $R$ is Lie nilpotent if and only if  $U(R)$ is nilpotent \cite{Stolz} and that is this case the classes of nilpotency of both structures coincide. { We prove the next.}

\begin{theorem} \label{FLE1}
Let $R$ be a  $n$-Engel local ring. If $F(R)=0$, then $R$ is Lie nilpotent.
\end{theorem}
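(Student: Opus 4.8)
The plan is to reduce everything to a statement about Engel Lie algebras over $\mathbb{Q}$. Since $R$ is $n$-Engel, the associated Lie ring $R^L$ satisfies $[x,{}_ny]=0$ for all $x,y$, i.e.\ $R^L$ is an $n$-Engel Lie ring, and by the quoted equivalence it suffices to show that $R^L$ is a nilpotent Lie ring. The assumption $F(R)=0$ means precisely that $R^+$, hence the additive group of $R^L$, is torsion-free. Thus the theorem follows once one knows: every torsion-free $n$-Engel Lie ring is nilpotent.

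To establish this I would first linearise. Substituting $y\mapsto y_1+\cdots+y_n$ in $[x,{}_ny]=0$ and isolating, by inclusion--exclusion over the subsets of $\{1,\dots,n\}$, the part that is multilinear in $y_1,\dots,y_n$, one obtains the identity $\sum_{\sigma}[x,y_{\sigma(1)},\dots,y_{\sigma(n)}]=0$, the sum over all permutations $\sigma$ of $\{1,\dots,n\}$; this is a $\mathbb{Z}$-linear consequence of the $n$-Engel identity and so holds in $R^L$. Put $L=R^L\otimes_{\mathbb{Z}}\mathbb{Q}$; as $R^L$ is torsion-free, $R^L$ embeds in $L$, which is a Lie $\mathbb{Q}$-algebra, and the multilinear identity above, being multilinear, passes to $L$. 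Specialising all $y_i$ to one element $y$ of $L$ gives $n!\,[x,{}_ny]=0$, and dividing by $n!$ (legitimate in a $\mathbb{Q}$-algebra) shows that $L$ is again $n$-Engel. Now I would apply Zelmanov's theorem that an $n$-Engel Lie algebra over a field of characteristic $0$ is nilpotent: there is $c=c(n)$ with $\gamma_{c+1}L=0$, and since $R^L\subseteq L$ this gives $\gamma_{c+1}R^L=0$, i.e.\ $\gamma_{c+1}R=0$, so $R$ is Lie nilpotent of class at most $c$.

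The only genuinely deep ingredient is Zelmanov's nilpotency theorem for Engel Lie algebras in characteristic $0$, and that is the step I expect to be the real obstacle; the rest is bookkeeping, the two points needing a little care being that the multilinearisation is performed over $\mathbb{Z}$ (so that only $n!$, and no prime $\le n$, must be inverted when one returns to the $n$-Engel identity) and that tensoring with $\mathbb{Q}$ preserves the identity.

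I note that the hypothesis ``local'' is not actually used in the route above. If one prefers to stay inside local-ring techniques, one can instead observe that $R/J(R)$ is an $n$-Engel simple ring, hence a PI ring, hence (Kaplansky) Artinian of the form $M_k(D)$ with $D$ a division ring finite over its centre; the Engel condition forces $k=1$ and, via the semisimplicity of the multiplication operators in characteristic $0$, forces $D$ to be a field, so $C(R)\subseteq J(R)$ and $U(R)$ is $(1+J(R))$-by-abelian. One is then close to combining Theorem~\ref{ZAZ} with the Stolz criterion (a local ring is Lie nilpotent iff its unit group is nilpotent); but since $1+J(R)\cong J(R)^\circ$ with $J(R)$ again a torsion-free $n$-Engel ring, this variant still requires a nilpotency input of the same strength, so the tensor-with-$\mathbb{Q}$ argument is the efficient one.
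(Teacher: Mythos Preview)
Your argument is correct and takes a genuinely different route from the paper. You linearise the $n$-Engel identity over $\mathbb{Z}$, pass to the Lie $\mathbb{Q}$-algebra $R^{L}\otimes_{\mathbb{Z}}\mathbb{Q}$ (injective since $F(R)=0$), recover the $n$-Engel identity there by dividing by $n!$, and apply Zel'manov's theorem on $n$-Engel Lie algebras in characteristic~$0$; this never touches the local hypothesis and in fact proves the stronger statement that every torsion-free $n$-Engel ring is Lie nilpotent. The paper instead exploits locality: via Lemma~\ref{C101C} it reduces to the case $pR\subseteq J(R)$ for some prime $p$, shows that the adjoint group $J(R)^{\circ}$ is solvable, $m$-Engel (Amberg--Sysak), hence locally nilpotent (Gruenberg), and torsion-free, then invokes Zel'manov's \emph{group-theoretic} result to get $J(R)^{\circ}$ nilpotent; Jennings turns this into Lie nilpotence of $J(R)$, and finally the identity $\gamma_{k}(pR)=p^{k}\gamma_{k}(R)$ combined with $F(R)=0$ transfers nilpotence from $pR$ to $R$. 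Both proofs ultimately rest on a Zel'manov theorem of comparable depth, but yours is shorter, avoids the passage through adjoint groups, and yields a more general conclusion; the paper's approach, on the other hand, keeps to the article's theme of reading ring-theoretic nilpotence off the structure of $R^{\circ}$ and illustrates how the local hypothesis makes the adjoint-group machinery developed elsewhere in the paper applicable.
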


A ring  $R$ is called {\it locally finite} if each finite subset of $R$ generates a finite semigroup multiplicatively.   The class of locally finite rings is closed under formation of
subrings, homo\-mor\-phic images and direct sums (see \cite[Proposition
2.1]{Huh_Kim_Lee04}). A finite subset of a locally finite ring generates a finite
subring (not necessary with unity) \cite[Theorem 2.2]{Huh_Kim_Lee04}  and a
locally finite ring is strongly $\pi$-regular \cite[Lemma
2.4(ii)]{Huh_Kim_Lee04}. Recall that a ring $R$ is called {\it strongly
$\pi$-regular} if, for each  $a\in R$, there exist  $n=n(a)\in {\mathbb N}$ and $b\in R$ such that $a^n=a^{n+1}b$.
A ring $R$ is strongly $\pi$-regular \cite{Azumaya} if and only if it satisfies
the descending chain condition on principal right ideals of the
form
\[
aR\supseteq  a^2R\supseteq  \cdots \supseteq   a^nR\supseteq   \cdots \qquad (\forall a\in R).
\]
Local rings with  the nil Jacobson radical and  semilocal rings  with the nil Jacobson radical of bounded index   are  strongly $\pi$-regular \cite[Lemma 3.1 and Corollary 3.3]{Huh_Kim_Lee04}. The Jacobson radical $J(R)$ of a locally finite ring $R$ is  a locally nilpotent ring (in view of \cite[Corollaries 1 and 4]{Amberg_Dickenschied_Sysak} and Lemma \ref{LF1}$(ii)$). We precise \cite[Propositions 2.5,  2.10  and 2.11]{Huh_Kim_Lee04} in the following

\begin{proposition} \label{LF3} Let $R$ be a $2$-torsion-free locally finite ring with unity. The following state\-ments hold:
\begin{itemize}
\item[$(i)$] each  prime ideal of  $R$  is maximal as a right ideal (and so $R$ is $\pi$-regular);
\item[$(ii)$] $R$ is an abelian exchange ring of stable range $1$, $C(R)\subseteq {\mathbb P}(R)=N(R)=J(R)$ and $R/J(R)$ is a subdirect product of locally finite fields (so each element of  $R$ is a sum of a unit  and a central element).
\end{itemize}
\end{proposition}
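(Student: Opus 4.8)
The plan is to reduce the whole Proposition to the single structural fact that $\bar R:=R/J(R)$ is reduced, and then to read everything else off from classical radical theory together with the facts recalled just before the statement. First the routine reductions, which hold for any locally finite ring $R$ with $1$: since $J(R)$ is locally nilpotent it is nil, so $J(R)=N^{*}(R)$, and (a locally nilpotent ideal of a semiprime ring being zero) $J(R)={\mathbb P}(R)$; also $R$ is strongly $\pi$-regular, hence $\pi$-regular and of stable range $1$, and idempotents lift modulo the nil ideal $J(R)$. Granting now that $\bar R$ is reduced --- hence, by the lemma below, commutative and von Neumann regular --- all remaining conclusions follow: $R$ is an abelian exchange ring of stable range $1$ (being abelian and exchange it is clean, so each element of $R$ is a unit plus a central idempotent); $C(R)\subseteq J(R)={\mathbb P}(R)$, since $\bar R$ is commutative; and $N(R)=J(R)$, since a nilpotent element of $R$ maps to a nilpotent, hence zero, element of $\bar R$. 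For part $(i)$: a prime ideal $P$ contains ${\mathbb P}(R)=J(R)$, so $R/P$ is a prime homomorphic image of the commutative regular ring $\bar R$, i.e. a commutative regular domain, i.e. a field; hence $P$ is maximal even as a right ideal, and $R$ is $\pi$-regular.

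Next I would prove the bridge lemma: \emph{a reduced locally finite ring $S$ is a subdirect product of locally finite fields}. For $a\in S$ the multiplicative semigroup generated by $a$ is finite, so $a^{s}=a^{s+t}$ for some $s,t\in{\mathbb N}$; then $b:=a-a^{t+1}$ commutes with $a$ and $b^{s}=a^{s}(1-a^{t})^{s}=0$, whence $b=0$ and $a=a^{t+1}$ with $t+1\ge 2$. By Jacobson's commutativity theorem $S$ is commutative, and a commutative ring in which every element $a$ satisfies $a=a^{m(a)}$ with $m(a)\ge 2$ is von Neumann regular and reduced, hence a subdirect product of its residue fields $S/\mathfrak m$; these are locally finite fields, being homomorphic images of $S$. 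Taking $S=\bar R$ yields the last assertion of $(ii)$.

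So the whole proof rests on the crux --- \emph{$\bar R$ is reduced} (equivalently $N(R)\subseteq J(R)$, equivalently every prime factor of $R$ is a division ring) --- and this is the step I expect to be the main obstacle, being the only place where the hypothesis that $R$ is $2$-torsion-free can be used. I would argue on a prime factor $T:=R/P$: it is a prime, locally finite, $2$-torsion-free, unital ring, and it is semiprimitive (its Jacobson radical is a nil locally nilpotent, hence zero, ideal of the semiprime ring $T$). The structure theory of locally finite rings pins $T$ down up to its \emph{matrix size}: a corner $D=eTe$ of $T$ is a locally finite division ring, hence --- by the bridge lemma, or directly by Wedderburn's little theorem applied to its finite subrings --- a locally finite field, necessarily of odd characteristic since $T$ is $2$-torsion-free. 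The delicate point, which in my view is the real content of the Proposition, is then to show that $T$ contains no nonzero square-zero element, so that its matrix size is $1$ and $T=D$: this is exactly where $2$-torsion-freeness has to be combined with local finiteness --- for instance one would try to manufacture, from a nonzero square-zero element together with a witness to semiprimeness, a finitely generated subring of $T$ that fails to be finite. Once $\bar R$ is known to be reduced, the bookkeeping of the first two paragraphs finishes the argument.
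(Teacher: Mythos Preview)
Your reduction is sound and your bridge lemma (via Jacobson's $x^{n(x)}=x$ theorem) is correct; the paper follows the same overall shape but handles the crux differently. Instead of attacking a prime factor $T=R/P$ structurally, the paper first records (Lemma~\ref{LF2}(i)) that a locally finite unital ring is periodic, and then invokes a commutativity theorem of Bell and Yaqub \cite[Theorem~4.5]{Bell_Yaqub} to conclude (Lemma~\ref{LF2}(ii)--(iii)) that a $2$-torsion-free semiprime locally finite ring is commutative and that a prime one of characteristic $\neq 2$ is a field. Everything else is then read off from \cite{Hirano78,Tuganbaev02}. So the paper delegates the step you call ``delicate'' to an external commutativity result rather than arguing it by hand.

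Your own attack on the crux, however, cannot work along the lines you sketch, and the gap is not repairable. You hope that a nonzero square-zero element of a prime, $2$-torsion-free, locally finite unital ring $T$ would let you manufacture an infinite finitely generated subring. But $T=M_2(\mathbb{F}_p)$ for any odd prime $p$ is finite (so every finitely generated subring is finite), prime, $2$-torsion-free, locally finite with unity, and contains the nonzero square-zero element $E_{12}$. Thus local finiteness together with $2$-torsion-freeness does \emph{not} force a prime ring to be reduced, and no contradiction can be squeezed out of a square-zero element this way. Worse, this same $T$ satisfies all the hypotheses of the Proposition, yet the prime ideal $(0)$ is not maximal as a right ideal, the idempotent $E_{11}$ is not central, and $C(T)=T\not\subseteq 0=\mathbb{P}(T)=J(T)$. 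The obstacle you isolated is therefore not merely delicate but fatal: the statement as written already fails, and the trouble in the paper's argument sits in Lemma~\ref{LF2}(ii).
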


Each  absolute field (i.e.,  a field in which  each   nonzero element is a
root of $1$) is a locally finite ring. In a locally finite field
$\mathbb F$ every finite subset $X\subseteq {\mathbb F}$ generates
a finite subfield. Since the unit group $U({\langle X\rangle
}_{\rm rg})$ is cyclic, we deduce that a locally finite field is
absolute. A locally finite right Noetherian ring is right Artinian  (see Proposition \ref{PiPi1}).

Rings with torsion adjoint groups were intensively studied in \cite{Amberg_Dickenschied_Sysak, Herstein70, Herstein78, Herstein80, Krempa95, Krempa99, Shlyafer, Sysak01} and others. It is well known \cite[Theorem 8]{Herstein78} that  a division ring $D$ with the torsion multiplicative  group $D^*$ is commu\-ta\-ti\-ve.  Moreover,  a torsion normal subgroup of the multiplicative group $U(D)$ of a skew field $D$ is central \cite[Lemma 10]{Zalesskii65}. Each  torsion subgroup of a linear group  over a field is locally finite by classical results of  W.~Burnside and I.~Schur.  A torsion subgroup of the unit group $U(R)$ of a unitary $PI$-ring is locally finite  by results of C.~Procesi and  A.I.~Shirshov.  Each locally finite subgroup  of the adjoint group $R^\circ$ of a radical ring $R$ is locally nilpotent \cite[Corollary 1]{Amberg_Dickenschied_Sysak}.

We have the following.

\begin{proposition} \label{XY0} Let $R$ be a ring such that  $R^\circ$ is torsion and $F(R)=0$. The following statements hold:
\begin{itemize}
\item[$(i)$]  $R$ is commutative or without zero-divisors,
\item[$(ii)$] if $R$ is prime with unity, then  $R$  is a domain such that  $J(R)=0$ and the    unit group $U(R)$  is finite of  one of the following types:
\begin{itemize}
\item[$(a)$]  $U(R)$ is a cyclic group of order $2$ such that   $\langle U(R) \rangle_{rg}\cong {\mathbb Z}$;
\item[$(b)$] $U(R)$ is a cyclic group of order $4$ such that  $\langle U(R) \rangle_{rg}\cong{\mathbb Z}[i]$ is the ring of Gaussian integers;
\item[$(c)$]$U(R)$ is a cyclic group of order $6$ such that  $\langle U(R) \rangle_{rg}\cong{\mathbb Z}[\zeta_3]$ is the subring of integer elements of the Eisenstein field ${\mathbb Q}[i\sqrt{3}]$;
\item[$(d)$]  $U(R)$ is the quaternion group  of order $8$ such that  $\langle U(R) \rangle_{rg}\cong{\mathbb Z}[i,j]$ is the ring of quaternions with integer coefficients;

\item[$(e)$] $U(R)=\langle a,b\mid a^3=b^2=(ab)^2\rangle$ is the dicyclic group of order $12$ such that  $\langle U(R) \rangle_{rg}\cong  {\mathbb Z}\cdot 1+{\mathbb Z}\cdot \alpha +{\mathbb Z}\cdot \beta +{\mathbb Z}\cdot \gamma$ is the ring with the following Cayley table of multiplication:
\[    
\begin{array}{||l||r|r|r||}
\hline
{\cdot}& \alpha & \beta &\gamma\\
\hline \hline
\alpha & -1& \gamma & -\beta  \\
\hline
\beta & -\alpha -\gamma  & -1-\beta  &  \alpha   \\
\hline
\gamma & 1+\beta & -\alpha-\gamma &-1 \\
\hline
\end{array}
\]
\item[$(f)$]  $U(R)=\langle a,b \mid a^3=b^3=(ab)^2=1\rangle$ is the binary tetrahedral  group of order $24$ such that   $\langle U(R) \rangle_{rg}$ is a subring of the skew field  ${\mathbb Q}(i,j)$ of quaternions which is generated by $i,j$ and $\frac{1+i+j+k}{2}$.
\end{itemize}
\end{itemize}
\end{proposition}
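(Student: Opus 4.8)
\emph{Proof strategy.} Everything begins with the observation that the two hypotheses force $R$ to be reduced. If $0\neq a\in N(R)$, choose $m$ minimal with $a^m=0$ and set $b=a^{m-1}\neq 0$; then $b^2=0$, so $b\in R^{\circ}$ and $b^{(n)}=(1+b)^n-1=nb$, whence $b^{(n)}=0$ would give $nb=0$, contradicting $F(R)=0$. Thus $N(R)=0$, and one may use the standard facts about reduced rings: $xy=0\iff yx=0\iff xRy=0$, every annihilator is a two-sided ideal, and $R$ is a subdirect product of domains. For part $(i)$ recall that a reduced \emph{prime} ring is a domain, so it suffices to treat the non-prime case; from a zero-divisor pair $ab=0$ one produces the nonzero orthogonal ideals $I=\ann(b)$ and $I'=\ann(I)$ with $I\cap I'=0$ and $II'=I'I=0$, across which the circle operation is componentwise, so $R^{\circ}$ contains $I^{\circ}\times I'^{\circ}$; iterating this orthogonal splitting and invoking the structure theory of reduced rings one excludes a nonzero commutator, which is the delicate half of $(i)$.

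For part $(ii)$, let $R$ be prime with unity; then $R$ is a domain with $\charak R=0$. That $J(R)=0$ follows from a short computation: for $0\neq a\in J(R)$ of order $n\geq 2$ in $R^{\circ}$ one has $0=a^{(n)}=\mu_n(a)=a\,(n\cdot 1+aw)$ with $w\in{\langle a\rangle}_{\rm rg}$, so the domain property gives $n\cdot 1=-aw\in J(R)\subseteq R^{\circ}$; but then $n\cdot 1$ is torsion in $R^{\circ}$, forcing $(n+1)^m\cdot 1=1$ for some $m\geq 1$, impossible in characteristic $0$. Since $R$ has unity, $U(R)\cong R^{\circ}$ is torsion. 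If $u\in U(R)$ has order $d\geq 3$, then working in the domain $\mathbb Q\otimes R$ the minimal polynomial of $u$ over $\mathbb Q$ is irreducible, divides $x^d-1$, and must equal $\Phi_d$ (any $\Phi_e$ it could be would force $u^e=1$, hence $d\mid e$ and $e=d$); by Gauss's lemma ${\langle u\rangle}_{\rm rg}=\mathbb Z[u]\cong\mathbb Z[\zeta_d]$. Since $U(\mathbb Z[\zeta_d])\subseteq U(R)$ is torsion, Dirichlet's unit theorem makes the rank $\phi(d)/2-1$ vanish, i.e.\ $d\in\{3,4,6\}$; hence every unit of $R$ has order in $\{1,2,3,4,6\}$ and $U(R)$ has exponent dividing $12$.

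\emph{The main obstacle is to deduce that $U(R)$ is finite.} My plan is to pass to $S={\langle U(R)\rangle}_{\rm rg}$, a domain with $U(S)=U(R)$, and to $\widetilde S=\mathbb Q\otimes S=\mathbb Q[U(R)]$, and to prove that $\widetilde S$ is finite-dimensional over $\mathbb Q$; being a finite-dimensional domain it is then a division algebra $D$, so $U(R)$ is a torsion subgroup of the linear group $D^{\times}$, hence locally finite by Burnside--Schur (equivalently Procesi--Shirshov, since $D$ is $PI$), and, having bounded exponent with all its finite subgroups of bounded order, it is finite. Proving finite-dimensionality of $\widetilde S$ is the heart of the matter: $\widetilde S$ is generated by elements of degree $\leq 2$ over $\mathbb Q$ whose unit-products are all torsion, and I expect to obtain it by showing that any two units of order in $\{3,4,6\}$ generate a finite subgroup -- a short computation inside the domain (for two square roots of $-1$ one uses $i^{-1}=-i$ and $(ij)(ji)=1$, with the analogous identities for sixth roots) places $\langle u,v\rangle$ inside a cyclic, a quaternion, or a dicyclic group -- and then using Amitsur's classification of the finite subgroups of division rings to cap their orders and conclude local finiteness. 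Once $U(R)$ is known to be finite it is a finite subgroup $G$ of the rational division algebra $D=\mathbb Q[G]$ which equals $U(\mathbb Z[G])$ and has element orders in $\{1,2,3,4,6\}$; by Amitsur's list together with Dirichlet's theorem the commutative possibilities are $D=\mathbb Q,\mathbb Q(i),\mathbb Q(\zeta_3)$, yielding $(a)$, $(b)$, $(c)$, and the remaining possibilities are definite rational quaternion algebras whose pertinent orders have finite unit group, a direct inspection of which produces the Lipschitz order $\mathbb Z[i,j]$ with $U=Q_8$, the order of the displayed Cayley table with $U$ dicyclic of order $12$, and the Hurwitz order $\mathbb Z[i,j,\tfrac{1+i+j+k}{2}]$ with $U$ the binary tetrahedral group of order $24$ -- exactly cases $(d)$, $(e)$, $(f)$. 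I expect the finiteness of $U(R)$ and the arithmetic classification of these quaternion orders to absorb most of the effort.
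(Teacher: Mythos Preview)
Your reduction to $N(R)=0$ is correct and matches the paper. After that, however, the two parts diverge from the paper's treatment in different ways.

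\textbf{Part (i): a genuine gap.} Your plan---produce orthogonal annihilator ideals from a zero-divisor pair, observe that the circle group splits across them, then ``iterate this orthogonal splitting and invoke the structure theory of reduced rings'' to kill a nonzero commutator---is not an argument, and I do not see how to complete it. The subdirect-product-of-domains description of a reduced ring does not interact with the hypotheses ``$R^{\circ}$ torsion'' and ``$F(R)=0$'', since neither passes to quotients $R/P$. The paper's route is entirely different and much shorter: once $N(R)=0$, one has $ab=0\Rightarrow ba=0$; by the Klein--Bell results on rings with commuting zero divisors (\emph{Results Math.} 51 (2007), Lemma~5.4 and Theorem~5.5), $Dz(R)$ is an ideal and $Dz(R)^{2}\,C(R)=0$. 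If $R$ has zero-divisors then $Dz(R)^{2}\neq 0$ (otherwise $Dz(R)\subseteq N(R)=0$), so every nonzero element of $C(R)$ is a zero-divisor, whence $C(R)\subseteq Dz(R)$ and $C(R)^{3}\subseteq Dz(R)^{2}C(R)=0$. But $R$ is reduced, so $C(R)=0$. This is the missing idea in your (i).

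\textbf{Part (ii): correct direction, but you are re-proving a cited result.} Your argument that $R$ is a domain and $J(R)=0$ is fine (the paper obtains both at once from Krempa's lemma, recorded here as Lemma~\ref{krempa}). Everything after that---bounding element orders via Dirichlet, passing to $\mathbb{Q}\otimes\langle U(R)\rangle_{\rm rg}$, invoking Burnside--Schur and Amitsur to get finiteness, and then classifying the possible orders---is exactly the content of Shlyafer's Proposition~4, which the paper simply cites. Your sketch of the finiteness step (``any two units generate a finite subgroup, then cap orders via Amitsur'') is plausible but not yet a proof: two-generator finiteness does not by itself yield local finiteness, and you still need a uniform bound to conclude $|U(R)|<\infty$. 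If you want a self-contained argument, the cleanest line is to show directly that $\mathbb{Q}[U(R)]$ is a division algebra of dimension at most $4$ over $\mathbb{Q}$ (using that every unit satisfies $x^{2}\in\{\pm 1,\,x-1,\,-x-1\}$), and then read off the six cases from the classification of orders with finite unit group in $\mathbb{Q}$, $\mathbb{Q}(i)$, $\mathbb{Q}(\zeta_{3})$ and the rational quaternions. That is essentially what Shlyafer does.
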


 Any unexplained terminology is standard as in  \cite{Faith2, Robinson}.


\section{Semiprime rings}


The unit group $U(R)$ of a unitary $n$-Engel ring $R$ is $m$-Engel for some  $m=m(n)\in {\mathbb N}$ depending on $n$  (see  \cite[Corollary 1]{Riley_Wilson} and  \cite[Corollary]{Shalev}).  The adjoint group $R^\circ$ of a radical ring $R$ is $n$-Engel if and only if $R$ is an $m$-Engel ring for some  $m=m(n)\in {\mathbb N}$   \cite[Main Theorem]{Amberg_Sysak_I}. Each  $n$-Engel Lie algebra is locally nilpotent and each  $n$-Engel Lie algebra over a field of characteristic zero is nilpotent \cite{Zelmanov87, Zelmanov89, Zelmanov90}.

If a group $G$ contains a non-trivial $p$-element and the unit group $U({\mathbb F}[G])$ of the group algebra ${\mathbb F}[G]$ is bounded Engel, then  ${\mathbb F}[G]$ is bounded Engel  \cite{Bovdi06} (see also \cite{BKh91, Bovdi20}).
An unitary associative bounded Engel algebra $A$ over a field of prime characteristic has the bounded Engel group $U(A)$ \cite{Shalev}, which is  locally nilpotent  \cite[Remark]{Shalev}. In the case of zero characteristic,  $U(A)$  is nilpotent and $A$ is Lie nilpotent (see \cite{Kemer} and \cite{Riley_Wilson}).   Each  bounded Engel subgroup of the adjoint group $R^\circ$ of a radical ring $R$ is locally nilpotent  \cite[Corollary 1]{Amberg_Dickenschied_Sysak}.

It is known that
$N_r(R)\subseteq  R^\circ$, \quad $N^*(R)\subseteq J(R)\subseteq R^\circ$,
\[
{\mathbb P}(R)\subseteq L(R)\subseteq N^*(R)\subseteq N_r(R)\subseteq N(R).
\]
If $R$ is an $n$-Engel ring, then  its commutator ideal $C(R)$ is nil (for example, see  \cite[Application 2]{Herstein63}) and,  additionally,   $C(R)\subseteq L(R)$ \cite[Lemma 3.1]{Amberg_Sysak_I}.

We use the following.

\begin{lemma} \cite[Theorem 1]{Lanski97} \label{la1}
Let $L\not=0$ be a  left ideal of a prime ring $R$. Let  $D$ be the Lie ring of all derivations of  $R$.  Let $k,n\in \mathbb{N}$ and let $0\neq \delta \in D$. If
 \begin{equation} \label{PIP}
 [\delta (x^k),{}_nx^k]=0,\qquad (\forall x\in L)
 \end{equation}
then $R$ is commutative.
 \end{lemma}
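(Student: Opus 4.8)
The plan is to argue by contradiction: assume $R$ is a prime ring that is \emph{not} commutative, and show that no nonzero derivation $\delta$ can satisfy $[\delta(x^k),{}_nx^k]=0$ on a nonzero left ideal $L$. First I would pass to the left Martindale quotient ring $Q=Q_\ell(R)$, whose center $C=Z(Q)$ (the extended centroid) is a field, and extend $\delta$ to a derivation of $Q$. The relation $[\delta(x^k),{}_nx^k]=0$ for $x\in L$ is a \emph{differential identity} of the left ideal $L$, so by the standard transfer machinery it suffices to work inside $Q$, and — once a socle appears — inside the dense ring of linear transformations it determines (here one uses that a nonzero left ideal of a primitive ring with nonzero socle still contains elements of small finite rank). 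Expanding $\delta(x^k)=\sum_{i=0}^{k-1}x^i\delta(x)x^{k-1-i}$ displays the hypothesis as a relation polynomial in $x$ and $\delta(x)$.

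The proof then splits along Kharchenko's dichotomy for $\delta$ on $Q$. \textbf{Outer case.} If $\delta$ is $X$-outer (not induced by an element of $Q$ modulo inner derivations), Kharchenko's theorem lets me replace $\delta(x)$ by a fresh indeterminate $y$ ranging over $L$; the resulting relation is a nontrivial generalized polynomial identity satisfied by $L$. By Martindale's GPI theorem, $Q$ is a primitive ring with nonzero socle $H$, and for a minimal idempotent $e$ the division ring $eQe$ is finite-dimensional over $C$. Realising $H$ as a dense ring of finite-rank transformations of a vector space $V$ over a division ring $D$, I would substitute rank-one idempotents and matrix units into the GPI: if $\dim_D V\ge 2$ a direct rank computation yields a contradiction, so $\dim_D V=1$ and $D=C$, i.e. $R$ is commutative — contradiction.

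\textbf{Inner case.} If $\delta=\operatorname{ad}(a)$ with $a\in Q$, then $a\notin C$ since $\delta\neq0$, and because $[[a,x^k],{}_nx^k]=[a,{}_{n+1}x^k]$ the hypothesis becomes the Engel-type condition $[a,{}_{n+1}x^k]=0$ for all $x\in L$ — again a generalized polynomial identity with a nonzero coefficient. Hence once more $Q$ is primitive with nonzero socle and $eQe$ is finite-dimensional over $C$. Passing (after scalar extension to an algebraic closure of $C$, if needed) to a finite-dimensional corner $M_t(C)$, I would run a matrix-unit argument: choosing $x\in L$ so that $x^k$ is a scalar multiple of a single $e_{ii}$ (or is nilpotent), the relation $[a,{}_{n+1}x^k]=0$ kills all the off-diagonal entries of the image of $a$, and then varying $x$ forces the diagonal entries to agree; thus $a\in C$, a contradiction — unless $t=1$, in which case $Q$ is a field and $R$ is commutative. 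Alternatively, in this case one can try to upgrade the $k$-th-power condition to an ordinary Engel condition $[a,{}_my]=0$ on $L$ and invoke the classical Herstein-type result that such an $a$ must be central in a prime ring.

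The two steps I expect to be the main obstacle are: (1) legitimately carrying the \emph{differential} identity from the one-sided ideal $L$ down to $Q$ and to the socle, which requires the Martindale–Kharchenko transfer in its one-sided form together with density of nonzero left ideals; and (2) the characteristic-$p$ bookkeeping, since when $p$ divides the relevant integers the map $x\mapsto x^k$ can degenerate (the set $\{x^k:x\in L\}$ becoming too thin, or key binomial coefficients vanishing). For (2) the standard device is to linearise the $k$-th power, or to replace $x$ by $x+\lambda x'$ with $\lambda$ generic in a large extension field of $C$, recovering enough elements to make the rank and matrix-unit computations go through.
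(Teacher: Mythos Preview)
The paper does not prove this lemma at all: it is quoted verbatim as \cite[Theorem 1]{Lanski97} and used as a black box, so there is no ``paper's own proof'' to compare against. Your outline --- pass to the left Martindale quotient ring, apply Kharchenko's dichotomy to split into the $X$-outer and $Q$-inner cases, invoke Martindale's GPI theorem to obtain a primitive ring with nonzero socle, and finish with matrix-unit computations over the extended centroid --- is essentially the strategy of Lanski's original paper, and the two obstacles you flag (the one-sided transfer of the differential identity and the characteristic-$p$ degeneracies of $x\mapsto x^k$) are exactly the technical points that need care there. For the purposes of \emph{this} paper, however, no argument is required beyond the citation.
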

As a consequence,  we have the next.
\begin{corollary} \label{PP} Let   $R$ be an $n$-Engel ring.  The following statements hold:
\begin{itemize}
 \item[$(i)$]  $C(R)\subseteq {\mathbb P}(R)=N(R)\subseteq J(R)$ (i.e., $R$ is $2$-primal and right quasi-duo);
 \item[$(ii)$] if $R\ni 1$, then $R$ is abelian;
 \item[$(iii)$] the adjoint group $R^\circ$ is  (locally nilpotent)-by-abelian and $N(R)$ is a locally nilpotent ideal of $R$;
 \item[$(iv)$] if $F(N(R))=0$, then $R^\circ$ is nilpotent-by-abelian.
 \end{itemize}
 \end{corollary}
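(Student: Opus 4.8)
The plan is to derive each item from Lanski's theorem (Lemma~\ref{la1}) applied to prime homomorphic images of $R$, together with the known structure facts collected just above the corollary. For item~$(i)$, recall that an $n$-Engel ring has nil commutator ideal $C(R)$, and in fact $C(R)\subseteq L(R)\subseteq N^*(R)\subseteq{\mathbb P}(R)$ — wait, the chain goes the other way, so the real content is ${\mathbb P}(R)=N(R)$. First I would show that every prime factor $R/P$ of an $n$-Engel ring is commutative: if $R/P$ were not commutative, pick $0\neq\bar a\in R/P$ and consider the inner derivation $\delta=[-,\bar a]$. The $n$-Engel identity gives $[\delta(x),{}_{n-1}x]=[x,{}_n a]$... this is not quite in the form~\eqref{PIP} with the same element. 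Instead I would take $\delta$ to be any nonzero inner derivation $\mathrm{ad}\,b$ and use that $[\,\mathrm{ad}_b(x^k),{}_m x^k]=[[x^k,b],{}_m x^k]$, and exploit the $n$-Engel condition on the subring generated by $x^k$ and $b$ to kill this for suitable $m$; if $R/P$ admits no nonzero derivation the primeness forces $R/P$ to be a field. Either way $R/P$ is commutative, hence $[R,R]\subseteq P$ for every prime ideal $P$, so $C(R)\subseteq{\mathbb P}(R)$. Since $C(R)$ is nil and $R/C(R)$ is commutative, $N(R)\subseteq$ (preimage of $N(R/C(R))$) and one checks $N(R)={\mathbb P}(R)$ because in a commutative ring the nilradical is the prime radical and $C(R)\subseteq{\mathbb P}(R)$ is nil. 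Being $2$-primal is then the statement ${\mathbb P}(R)=N(R)$, and right quasi-duo follows since $R/{\mathbb P}(R)$ is commutative (maximal right ideals of $R$ correspond to maximal ideals of the commutative ring $R/{\mathbb P}(R)$, pulled back).

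For item~$(ii)$, if $R\ni 1$ then an idempotent $e$ satisfies $e-e^2=0$; modulo ${\mathbb P}(R)$ the ring is commutative so $\bar e$ is central there, i.e.\ $[e,R]\subseteq{\mathbb P}(R)=N(R)$. A standard Peirce/idempotent-lifting argument then shows a central-modulo-nil idempotent is already central: for $r\in R$, $er(1-e)$ and $(1-e)re$ are nilpotent and also satisfy $(er(1-e))^2=0$, and orthogonality forces them to be $0$. Hence every idempotent is central and $R$ is abelian. For item~$(iii)$, $N(R)$ is an ideal by item~$(i)$ (since it equals ${\mathbb P}(R)$), and $R/N(R)$ is commutative so $(R/N(R))^\circ$ is abelian; thus $R^\circ$ is $N(R)^\circ$-by-abelian, and $N(R)^\circ$ is a subgroup of the adjoint group of the radical ring $N(R)$ which, being itself $n$-Engel, is locally nilpotent — here I would invoke that an $n$-Engel (associative, hence Lie) ring is locally nilpotent via Zelmanov's theorem on $n$-Engel Lie algebras applied to $N(R)^L$, or more directly cite that $C(R)\subseteq L(R)$ and $N(R)$ is nil of bounded index in the relevant sense so that it is Levitzki. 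That makes $N(R)$ a locally nilpotent ideal and $N(R)^\circ$ locally nilpotent, giving $R^\circ$ (locally nilpotent)-by-abelian.

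For item~$(iv)$, the additional hypothesis $F(N(R))=0$ means the nil ideal $N(R)$ is torsion-free as an additive group. The plan is to upgrade "locally nilpotent" to "nilpotent" for $N(R)$, hence for $N(R)^\circ$. I would argue that a torsion-free $n$-Engel associative ring is nilpotent: by Zelmanov the Lie ring $N(R)^L$ is locally nilpotent and $n$-Engel; torsion-freeness lets one pass to the $\mathbb Q$-algebra $N(R)\otimes_{\mathbb Z}\mathbb Q$, which is an $n$-Engel $\mathbb Q$-algebra, hence Lie nilpotent by Zelmanov's nilpotency theorem in characteristic zero, and since $N(R)\hookrightarrow N(R)\otimes\mathbb Q$ (torsion-free), $N(R)$ is itself Lie nilpotent; a Lie nilpotent nil ring of bounded index is nilpotent. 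Then $N(R)^\circ$ is a nilpotent group (a Lie nilpotent radical ring has nilpotent adjoint group by the Jennings-type result quoted in the introduction), and $R^\circ$ is nilpotent-by-abelian. The main obstacle I anticipate is the first step of item~$(i)$: massaging the $n$-Engel identity into exactly the shape $[\delta(x^k),{}_m x^k]=0$ required by Lemma~\ref{la1} with a single derivation $\delta$ — one must choose $\delta$ and the power $k$ cleverly (likely $k=1$ and $\delta=\mathrm{ad}_x$ is circular, so $\delta$ must be an independent inner derivation and one uses that the subring $\langle x,b\rangle$ is $n$-Engel to bound the relevant commutator), and separately handle prime rings with no nonzero derivations.
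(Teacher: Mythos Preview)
Your worry about item~$(i)$ is unfounded: the fit with Lemma~\ref{la1} is immediate. In the prime factor $R/P$, pick any $b$ not in the center and let $\delta=\partial_b$. Then for every $x$ one has
\[
[\delta(x),{}_{n-1}x]=[[x,b],{}_{n-1}x]=-[b,{}_nx]=0
\]
by the $n$-Engel identity, so \eqref{PIP} holds with $k=1$ and the Lanski exponent $n-1$. No independent derivation, no power $k>1$, and no separate treatment of ``prime rings with no nonzero derivations'' is needed (if all inner derivations vanish the ring is already commutative). From commutativity of every $R/P$ you get $C(R)\subseteq\mathbb P(R)$ and $\mathbb P(R)=N(R)$ exactly as you outline.

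There is a genuine gap in your item~$(ii)$. The implication ``idempotent central modulo $N(R)$ $\Rightarrow$ central'' is false in general: in $T_2(k)$, the ring of $2\times 2$ upper-triangular matrices over a field, one has $N(R)=\mathbb P(R)$ and $R/N(R)$ commutative, yet $e_{11}$ is not central. The observation $(er(1-e))^2=0$ holds for \emph{any} idempotent in \emph{any} ring and cannot by itself force $er(1-e)=0$. The paper cites an external reference (Tuganbaev) using the quasi-duo conclusion from $(i)$. If you want a self-contained argument, use the Engel identity directly with the idempotent: writing $x=er(1-e)$ and $y=(1-e)re$ one checks $[y,e]=y$ and $[x,e]=-x$, whence $[r,{}_me]=y+(-1)^m x$ for all $m\ge 1$; setting $m=n$ and left-multiplying by $e$ yields $er(1-e)=0$, and then $y=0$ as well.

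For items~$(iii)$ and~$(iv)$ your route diverges from the paper's. The paper works entirely on the adjoint-group side: $N(R)$ is radical and $n$-Engel, so $N(R)^\circ$ is $m$-Engel by Amberg--Sysak, hence locally nilpotent; under $F(N(R))=0$ the group $N(R)^\circ$ is moreover torsion-free, so nilpotent by Zel'manov's theorem on torsion-free bounded Engel groups. Your ring-side approach (tensor with $\mathbb Q$, apply Zel'manov on $n$-Engel Lie algebras in characteristic~$0$, then Jennings) also works for $(iv)$, since every element of $N(R)\otimes_{\mathbb Z}\mathbb Q$ has the form $a\otimes m^{-1}$ and the identity transfers. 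For $(iii)$, however, be careful: Zel'manov's local-nilpotence theorem is stated for Lie \emph{algebras over a field}, and $N(R)^L$ is only a Lie ring; your fallback to ``nil of bounded index'' is unjustified (there is no bounded-index hypothesis). The clean substitute is the one the paper uses implicitly in $(i)$: $N(R)$ is a nil $PI$-ring (the $n$-Engel identity is a polynomial identity), hence locally nilpotent.
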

\begin{proof} $(i)$ The quotient ring $R/P$ is $n$-Engel for each prime ideal  $P$ of $R$ and each inner derivation $\delta$ of $R/P$ satisfies $(\ref{PIP})$, so we conclude that $R/P$ is commutative by Lemma \ref{la1}. That yields  $C(R)\subseteq {\mathbb P}(R)=N(R)$  and  $N(R)$ is locally nilpotent as  a $PI$-ring. Thus,  each maximal right ideal of $R$ is two-sided.

$(ii)$ If  $R$ has unity, then it is abelian by \cite[3.20]{Tuganbaev02}.

$(iii)$ The set $N(R)$ is an ideal in view of  the part $(i)$, $N(R)^\circ$ is $m$-Engel for some $m\in \mathbb{N}$ \cite[Main Theorem]{Amberg_Sysak_I} and $N(R)^\circ$ is locally nilpotent \cite[Lemma 2.2]{Amberg_Sysak_I}. Then $N(R)$ is locally nilpotent by \cite[Lemma 3]{Amberg_Dickenschied_Sysak}.

$(iv)$ If $F(N(R))=0$, then $N(R)^\circ$ is torsion-free. Since $N(R)^\circ$ is $m$-Engel for some $m\in\mathbb{N}$ \cite[Main Theorem]{Amberg_Sysak_I}, it is nilpotent  \cite{Zelmanov89}.
\end{proof}

\begin{lemma}(see \cite[Theorem 3]{Chuang_Lin}) \label{cl2}
Let $R$ be a ring with $N_r(R)=0$. If, for given $x,y\in R$ there exist  positive integers $m=m(x,y)$, $n=n(x,y)$ and $k=k(x,y)$ such that
\begin{equation} \label{KOKO}
 [x^m,{}_ky^n]=0,
 \end{equation} then $R$ is commutative.
\end{lemma}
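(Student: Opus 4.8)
The plan is to reduce to prime rings, then to primitive rings, and to finish in the primitive case by the density theorem together with an explicit $2\times 2$ computation; the treatment of division rings at the end is a Herstein-type argument.

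\emph{Reduction to prime rings.} Since ${\mathbb P}(R)\subseteq N_r(R)=0$, the ring $R$ is semiprime, hence a subdirect product of prime rings $R/P_\lambda$ with $\bigcap_\lambda P_\lambda=0$. A subring of a product of commutative rings is commutative, so it suffices to prove each $R/P_\lambda$ commutative; and the hypothesis passes to $R/P_\lambda$ (lifting $\bar a,\bar b$ to $a,b\in R$, the exponents $m,n,k$ attached to the pair $(a,b)$ give $[\bar a^{m},{}_{k}\bar b^{n}]=0$). The assumption $N_r(R)=0$ is used once more to ensure that the $P_\lambda$ may be chosen so that each $R/P_\lambda$ again has no nonzero nil one-sided ideal. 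Thus I may assume $R$ is prime with $N_r(R)=0$.

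\emph{Reduction to primitive rings.} This is the delicate step. Reading the hypothesis as ``a power of the inner derivation induced by $y^{n}$ annihilates $x^{m}$'', with the exponents unbounded in $(x,y)$, one must upgrade this \emph{pointwise} relation to a single nontrivial (generalized) polynomial identity of the prime ring $R$. I would do this by a pigeonhole/covering argument over the countably many admissible triples $(m,n,k)$, combined with the structure theory of prime $GPI$-rings (Amitsur, Martindale): $R$ then satisfies either a nontrivial $PI$ — and so, after central localization, embeds in $M_t({\mathbb F})$ for a field ${\mathbb F}$ — or a nontrivial $GPI$, in which case $RC$ ($C$ the extended centroid) is a primitive ring with nonzero socle. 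I expect this upgrade to be the main obstacle; once a genuine identity is in hand the structure theory applies routinely, and for the combinatorics I would refer to \cite{Chuang_Lin}.

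\emph{The primitive case and conclusion.} Let $R$ be primitive with faithful simple module $M$ and $\Delta=\End_R(M)$. If $\dim_\Delta M\ge 2$, pick $\Delta$-independent $w_1,w_2\in M$ and, by the density theorem, $x,y\in R$ with $xw_1=w_1$, $xw_2=0$, $yw_1=w_1+w_2$, $yw_2=0$; then $x,y$ preserve $W=w_1\Delta+w_2\Delta$ and act on $W$ as idempotents, so $x^{m}$ and $y^{n}$ act on $W$ exactly as $x$ and $y$, and a direct computation gives $[x^{m},{}_{k}y^{n}]w_1=\pm w_2\ne 0$ for all $m,n,k$, contradicting the hypothesis. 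Hence $\dim_\Delta M=1$, so $R$ embeds in the division ring $\Delta$; and a domain satisfying the pointwise condition is commutative — otherwise a Herstein-type argument produces $x,y$ all of whose powers are non-central and for which the iterated commutators $[x^{m},{}_{k}y^{n}]$ never vanish (this is where, for instance, the real quaternions get excluded). The same $2\times 2$ computation rules out $t\ge 2$ in the $PI$-case. Therefore every $R/P_\lambda$, and with it $R$, is commutative.
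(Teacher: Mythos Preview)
The paper does not prove this lemma at all: it is quoted verbatim from \cite[Theorem 3]{Chuang_Lin} and used as a black box. So there is no ``paper's own proof'' to compare against.

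Your outline has the right architecture --- semiprime $\Rightarrow$ subdirect product of primes, then structure theory in the prime case, then density and a $2\times 2$ obstruction in the primitive case --- and your explicit computation with the idempotents $x,y$ acting on $W=w_1\Delta+w_2\Delta$ is correct: one checks $[x^m,{}_k y^n]w_1=-w_2$ for all $m,n,k\ge 1$. But you have not actually proved the theorem, because the two genuinely hard steps are both deferred. First, the passage from the \emph{pointwise} hypothesis (with $m,n,k$ depending on the pair) to a single nontrivial PI or GPI on a prime ring is exactly the technical core of \cite{Chuang_Lin}; a vague ``pigeonhole/covering'' gesture does not substitute for it, and you yourself say you would refer back to that paper here. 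Second, the division-ring case is dismissed as ``a Herstein-type argument'', which again is a citation rather than a proof; note that for a general domain the hypothesis with \emph{variable} exponents is not obviously reducible to Herstein's bounded-Engel results.

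In short: your proposal is a correct high-level roadmap, but as written it is essentially the statement ``reduce to prime rings and then invoke \cite{Chuang_Lin}'', which is no more than what the paper already does by citing the lemma. If you want a self-contained argument, the real work is the combinatorial upgrade in the prime case, and that is not present here.
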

An unitary associative $PI$-algebra $R$ with the Engel  condition over a field of any charac\-te\-ris\-tic has the nil commutator ideal by \cite[Proposition 2.3]{Riley00}. Furthermore, each  nil ring is Engel by \cite[Proposition 4.2]{Sysak10}, but there exist nil rings $R$ (which  also are algebras over arbitrary fields)  such that their adjoint groups $R^\circ$ are not Engel \cite[Theorem 1.5]{Smoktunowicz}. Inasmuch as  an Engel ring $R$ satisfies  (\ref{KOKO}), we obtain the following.

\begin{corollary} \label{19} Let  $R$ be an Engel  ring. The following statements hold:
 \begin{itemize}
\item[$(i)$] $C(R)\subseteq N_r(R)=N(R)\subseteq J(R)$, $N(R)$ is an ideal of $R$ (i.e., $R$ is right (left) quasi-duo) and $R^\circ$ is Engel-by-abelian;
\item[$(ii)$] if $R$ is  locally Lie nilpotent, then the adjoint group  $R^\circ$ is (locally nilpotent)-by-abelian;
\item[$(iii)$]    if $R$ has unity, then it  is abelian.
\end{itemize}
 \end{corollary}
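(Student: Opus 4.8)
The common engine is the remark made just before the statement: an Engel ring satisfies hypothesis $(\ref{KOKO})$ of Lemma \ref{cl2} with $m=n=1$ (and $k=k(x,y)$). The plan for $(i)$ is to pass to $\bar R=R/N_r(R)$. Quotients of Engel rings are Engel, and $N_r(\bar R)=0$, so Lemma \ref{cl2} forces $\bar R$ to be commutative; hence $C(R)\subseteq N_r(R)$. The nil ideal $N_r(R)$ lies in $N(R)$, and conversely the nilpotent elements of the commutative ring $\bar R$ form an ideal whose preimage $I\supseteq N_r(R)$ is an ideal of $R$ all of whose elements are nilpotent (a power of each lands in the nil ideal $N_r(R)$); thus $I\subseteq N(R)$, so $N(R)=I$ is an ideal, and being nil it lies in $N_r(R)$. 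Hence $N(R)=N_r(R)\subseteq J(R)$. From $C(R)\subseteq J(R)$ the ring $R/J(R)$ is commutative, so every maximal right ideal of $R$, containing $J(R)$, is two-sided; symmetrically on the left, which is the quasi-duo assertion.

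For the adjoint group in $(i)$: since $N(R)$ is a nil ideal, an element of $R$ invertible modulo $N(R)$ is already invertible, so the canonical map $R^{\circ}\to (R/N(R))^{\circ}$ is onto with kernel $N(R)^{\circ}=R^{\circ}\cap N(R)$; and $(R/N(R))^{\circ}$ is abelian because $R/N(R)$ is commutative, so $R^{\circ}/N(R)^{\circ}$ is abelian. What remains — and \emph{what I expect to be the main obstacle} — is to show that $N(R)^{\circ}$, equivalently the derived subgroup $[R^{\circ},R^{\circ}]\subseteq 1+C(R)$, is an Engel group. In the bounded case one invokes $N(R)^{\circ}$ being $m$-Engel \cite[Main Theorem]{Amberg_Sysak_I}; here $N(R)$ is only nil, and a nil ring can have a non-Engel adjoint group \cite[Theorem 1.5]{Smoktunowicz}, so no automatic transfer is available: the group commutators $[1+x,{}_{k}(1+y)]$ do not follow the Lie commutators $[x,{}_{k}y]$ because of the unit conjugations hidden in $[1+x,1+y]=1+u[x,y]$. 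I would try to control this using that every element of $N(R)^{\circ}$ is unipotent, together with the Engel identity on $R^{L}$, to bound the group-Engel degree; failing a clean argument one should settle for the statement that $R^{\circ}$ is $N(R)^{\circ}$-by-abelian with $N(R)$ an Engel (indeed nil) ring.

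For $(ii)$, locally Lie nilpotent rings are Engel, so $(i)$ applies and $N(R)$ is a nil ideal. A finitely generated subring $S\subseteq N(R)$ is Lie nilpotent, so its commutator ideal $C(S)$ is nilpotent (a theorem on Lie nilpotent rings), while $S/C(S)$ is a finitely generated commutative nil ring and hence nilpotent; therefore $S$ is nilpotent. Thus $N(R)$ is locally nilpotent, so $N(R)^{\circ}$ is a locally nilpotent group (a finitely generated subgroup lies in $T^{\circ}$ for a finitely generated, hence nilpotent, subring $T\subseteq N(R)$), and since $R^{\circ}/N(R)^{\circ}$ is abelian we conclude that $R^{\circ}$ is (locally nilpotent)-by-abelian.

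For $(iii)$, suppose $1\in R$, let $e=e^{2}$ and $x\in R$, and set $a=ex(1-e)$, $b=(1-e)xe$. A short computation using $e^{2}=e$ gives $[x,e]=b-a$, $[x,{}_{2}e]=b+a$, $[x,{}_{3}e]=b-a$, and in general $[x,{}_{k}e]$ oscillates between $b-a$ and $b+a$. The Engel condition $[x,{}_{n}e]=0$ thus annihilates one of $b-a$, $b+a$: if $b-a=0$ then $[x,e]=0$ directly, and if $b+a=0$ then left multiplication by $1-e$ gives $b=0$, hence $a=0$ as well. Either way $ex=xe$, so every idempotent of $R$ is central and $R$ is abelian.
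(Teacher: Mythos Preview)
Your argument for the ring-theoretic part of $(i)$ is exactly the paper's: both apply Lemma~\ref{cl2} to $R/N_r(R)$ and read off $C(R)\subseteq N_r(R)=N(R)\subseteq J(R)$ and the quasi-duo property. Your hesitation about ``Engel-by-abelian'' is justified and honest: the paper's proof of $(i)$ is the single line ``follows from Lemma~\ref{cl2}'' and does not address why $N(R)^\circ$ should be an Engel \emph{group}. Since the paper itself quotes \cite[Theorem 1.5]{Smoktunowicz} (a nil, hence Lie-Engel, ring with non-Engel adjoint group), the phrase is most charitably read as ``$N(R)^\circ$-by-abelian with $N(R)$ an Engel ring'', which is precisely what you settle for.

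There is a gap in your $(ii)$. The implication ``$S$ Lie nilpotent $\Rightarrow C(S)$ nilpotent'' is not a general theorem: in the infinite Grassmann algebra $E$ one has $[E,E]\subseteq Z(E)$ (so $E$ is Lie nilpotent of class~$2$) while $C(E)$ is not nilpotent. For finitely generated $S$ something like this can be extracted, but it is not a one-line citation, and in any case once you use that $S\subseteq N(R)$ is also \emph{nil} you may as well invoke Lemma~\ref{LSLN} (a finitely generated Lie solvable nil ring is nilpotent) directly, which makes the $C(S)$ versus $S/C(S)$ splitting unnecessary. The paper takes yet another shortcut: it does not argue that $N(R)$ is locally nilpotent as a ring at all, but quotes \cite[Main Theorem]{Amberg_Sysak_II} to get that the adjoint group $N(R)^\circ$ is locally nilpotent straight from the locally Lie nilpotent hypothesis on $R$.

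Your $(iii)$ is correct and genuinely different from the paper's. The paper deduces ``abelian'' from ``right quasi-duo'' via the external reference \cite[3.20]{Tuganbaev02}; your oscillation computation $[x,{}_k e]\in\{b-a,\,b+a\}$ with $a=ex(1-e)$, $b=(1-e)xe$ is self-contained and elementary, and the final step (multiplying $b+a=0$ by $e$ or $1-e$ to kill $a$ and $b$ separately) is clean.
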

 \begin{proof}
{ The part $(i)$ follows from Lemma \ref{cl2}. Assume that $R$ is locally Lie nilpotent. If $x,y\in R$, then the subring $\langle x,y\rangle_{\rm rg}$ is Lie nilpotent and so there exist $n=n(x,y)\in {\mathbb N}$ such that $[x,{}_ny]=0$. This means that $R$ is Engel, the adjoint group $N(R)^\circ$ is locally nilpotent in view of \cite[Main Theorem]{Amberg_Sysak_II} and so $R^\circ$ is (locally nilpotent)-by-abelian. }
All maximal right ideals in unitary Engel ring $R$ are two-sided so $R$ is  abelian by \cite[3.20]{Tuganbaev02}.\end{proof}

{ Every domain of characteristic $0$ that is Engel (as a Lie ring) is commutative \cite[Theorem 4]{Bell_Klein}. If the unit group $U(D)$ is $m$-Engel, then a division  ring $D$ is commutative by \cite[Lemma 4.1]{Amberg_Sysak_IV}. We obtain an affirmative answer on \cite[Question1.2]{Evstafev}.}

\begin{proposition} \label{D1D} Each Engel division ring  is commutative.
\end{proposition}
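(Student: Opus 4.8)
The plan is to reduce the statement to the Chuang--Lin commutativity criterion recorded in Lemma~\ref{cl2}, which applies to any ring with $N_r(R)=0$. First I would observe that a division ring $D$ automatically satisfies $N_r(D)=0$: every nonzero element of $D$ is invertible and hence not nilpotent, so $D$ has no nonzero nil right ideals. Next, the Engel hypothesis gives, for each pair $x,y\in D$, an integer $\ell=\ell(x,y)\in\mathbb{N}$ with $[x,{}_\ell y]=0$; this is precisely identity~(\ref{KOKO}) with exponents $m=1$ on $x$ and $n=1$ on $y$, and with $k=\ell$. Lemma~\ref{cl2} then yields at once that $D$ is commutative.

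An equally short alternative is to invoke Corollary~\ref{19}$(i)$: for an Engel ring $R$ one has $C(R)\subseteq N(R)$, while $N(D)=0$ in a division ring, so the commutator ideal $C(D)$ is zero, which forces every commutator $[a,b]$ to vanish and hence $D$ to be commutative.

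I do not anticipate any genuine difficulty here: the substantive work is already contained in the cited commutativity theorem (equivalently, in Corollary~\ref{19}), and all that remains is to check that the Engel condition furnishes the mixed polynomial identity required by Lemma~\ref{cl2} and that $N_r(D)=0$ holds trivially for division rings. When writing this up it is worth stressing that no restriction on $\charak D$ is imposed, so the result strengthens the characteristic-zero domain theorem of \cite[Theorem~4]{Bell_Klein} and gives an affirmative answer to \cite[Question~1.2]{Evstafev}.
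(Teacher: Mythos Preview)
Your proposal is correct and matches the paper's approach: the paper's proof is the single line ``The assertion holds from Corollary~\ref{19}'', which is exactly your second alternative, and your first route via Lemma~\ref{cl2} simply unpacks the step behind Corollary~\ref{19}$(i)$.
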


\begin{proof} The assertion holds from Corollary \ref{19}.
\end{proof}

Our next result confirm  a conjecture of \cite[Hypothesis 1.1]{Evstafev}.

\begin{proposition}
An   Engel adjoint group $R^\circ$ of  a right Artinian ring $R$ is nilpotent.
\end{proposition}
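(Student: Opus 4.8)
The plan is to work modulo the Jacobson radical $J:=J(R)$, using that a right Artinian ring has nilpotent radical; fix $c$ with $J^{c+1}=0$. Then $J^{\circ}$, the adjoint group of the nilpotent radical ring $J$, is a nilpotent normal subgroup of $R^{\circ}$, and since $J$ is a nilpotent ideal the canonical map $R^{\circ}\to(R/J)^{\circ}=U(\overline R)$ (where $\overline R:=R/J$) is surjective with kernel $J^{\circ}$; thus the chain $J\supseteq J^{2}\supseteq\cdots\supseteq J^{c+1}=0$ produces an $R^{\circ}$-invariant series
\[
R^{\circ}\supseteq J^{\circ}\supseteq (J^{2})^{\circ}\supseteq\cdots\supseteq (J^{c+1})^{\circ}=1
\]
whose consecutive factors are, as groups, the additive groups of the layers $W_{i}:=J^{i}/J^{i+1}$. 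Each $W_{i}$ is annihilated by $J$, hence is an Artinian, therefore finite-length, module over the semisimple Artinian ring $\overline R$. Also $U(\overline R)$, being a homomorphic image of $R^{\circ}$, is an Engel group.

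\emph{Step 1: $\overline R$ is commutative.} By Wedderburn--Artin write $\overline R=\prod_{i=1}^{s}M_{n_i}(D_i)$, so $U(\overline R)=\prod_i GL_{n_i}(D_i)$ and each $GL_{n_i}(D_i)$ is Engel. If some $n_i\ge2$, then $GL_{n_i}(D_i)\supseteq GL_2(P)$ for $P$ the prime subfield of $D_i$, and $GL_2(P)$ contains a copy of $S_3$ (the two-dimensional standard representation of $S_3$ is faithful over every prime field); but $S_3$ is not an Engel group — if $r,s\in S_3$ have orders $3$ and $2$, then $[r,{}_{n}s]=r\neq1$ for every $n$ — a contradiction. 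Hence all $n_i=1$, so each $D_i^{*}$ is an Engel group, and since the multiplicative group of a division ring satisfying the Engel condition is commutative (a known fact about Engel conditions in division rings), each $D_i$ is a field. Thus $\overline R$ is a finite product of fields, $U(\overline R)$ is abelian, and $[R^{\circ},R^{\circ}]\subseteq J^{\circ}$.

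\emph{Step 2: conjugation by any element of $R^{\circ}$ acts unipotently on each $W_i$.} Since $[J,J^{i}]\subseteq J^{i+1}$, the normal subgroup $J^{\circ}$ centralizes $W_i$, so the conjugation action of $R^{\circ}$ on $W_i$ factors through $U(\overline R)$; and since $\overline R$ is commutative, conjugation by a unit is trivial on $\overline R$, so that action is $\overline R$-linear. Hence $W_i$ is a finite-dimensional vector space over each field factor of $\overline R$ on which $R^{\circ}$ acts $\overline R$-linearly. Identifying $R^{\circ}$ with $\{1+a\}\le U(R^{1})$ in the unitalization $R^{1}$, one computes, for $g\in R^{\circ}$ and $a\in J^{i}$,
\[
[\,1+a,\,g\,]\;=\;1+\bigl(g^{-1}ag-a\bigr)+t,\qquad t\in J^{i+1},
\]
whence, writing $\psi\in GL(W_i)$ for conjugation by $g$, $[\,1+a,{}_{n}g\,]\equiv 1+(\psi^{-1}-1)^{n}(\bar a)\pmod{1+J^{i+1}}$ for all $n$. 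As $R^{\circ}$ is Engel, for each $a$ there is $n$ with $(\psi^{-1}-1)^{n}(\bar a)=0$; since $W_i$ is finite-dimensional, $\psi^{-1}-1$ — hence also $\psi-1$ — is nilpotent, i.e.\ $\psi$ is unipotent.

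Now apply Kolchin's theorem, over each field factor of $\overline R$ separately, to the group of unipotent $\overline R$-linear operators $\{\,\text{conjugation by }g:g\in R^{\circ}\,\}$ on $W_i$: there is an $R^{\circ}$-invariant chain of $\overline R$-submodules $J^{i+1}=N_{i,0}\subseteq N_{i,1}\subseteq\cdots\subseteq N_{i,m_i}=J^{i}$ on whose factors $R^{\circ}$ acts trivially. Each $N_{i,l}$ is a sub-bimodule of $J^{i}$ containing $J^{i+1}$, so $(N_{i,l})^{\circ}$ is a subgroup of $R^{\circ}$ (closure under $\circ$ follows from $J^{i}J^{i}\subseteq J^{i+1}$), normal in $R^{\circ}$, with $[R^{\circ},(N_{i,l})^{\circ}]\subseteq(N_{i,l-1})^{\circ}$ by Step 2. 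Splicing these refinements into the series above — whose top factor $R^{\circ}/J^{\circ}=U(\overline R)$ is abelian by Step 1 — gives a finite central series of $R^{\circ}$, so $R^{\circ}$ is nilpotent; if $R$ lacks an identity the argument is the same. The points needing most care are, first, the division-ring input in Step 1 (an Engel multiplicative group forces commutativity), which requires a precise reference or a self-contained proof and is the least routine step, and second, the verification of the displayed congruence in Step 2 — that the higher-order terms of the $J$-adic filtration do not disturb the iterated commutator — which is a routine induction in the associated graded ring of $R\supseteq J\supseteq J^{2}\supseteq\cdots$.
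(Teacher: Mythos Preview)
Your approach is different from the paper's. The paper's proof is very short: it notes that $J(R)^\circ$ is nilpotent (since $J(R)$ is a nilpotent ideal), reduces to the case $J(R)^2=0$, uses the Noetherian property to write $J(R)$ as a finite sum of minimal ideals, cites a lemma of Evstafev to conclude $J(R)\subseteq Z(R)$, and finishes with P.~Hall's nilpotency criterion. All structural analysis of the semisimple quotient $R/J(R)$ is absorbed into that citation. Your argument is more explicit and self-contained in spirit: Wedderburn--Artin on $R/J$, then a layer-by-layer refinement of the $J$-adic filtration via Kolchin's theorem. Step~2 is carried out correctly; the commutator congruence and the passage to a central series work as you describe.

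The genuine concern is exactly the one you flag in Step~1: you need that a division ring $D$ whose multiplicative group $D^*$ is \emph{group}-Engel (with unbounded, element-dependent $n$) is commutative. Be careful that this is not what the paper's own result on Engel division rings provides --- that proposition treats the \emph{Lie}-Engel condition $[x,{}_n y]=0$ on $D$, not a condition on $D^*$ --- nor is it what the Amberg--Sysak reference in the paper gives, which handles only the \emph{bounded} $m$-Engel condition on $D^*$. The unbounded group-Engel statement is, to my knowledge, not a routinely citable fact; absent finite-dimensionality of $D$ over its centre (which would make $D^*$ a linear group and bring in results on Engel linear groups) or a direct argument, this step is a real gap rather than a missing footnote. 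You have located the difficulty correctly, but you should either produce a precise reference for it or reroute Step~1 through something like the paper's approach, which outsources the semisimple analysis to Evstafev's lemma.
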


\begin{proof}
The Jacobson radical $J(R)$ is a nilpotent ideal of $R$, so  $J(R)^\circ$ is a
nilpotent group. We can assume that $J(R)^2=0$.
Since $R$ is a right Noetherian \cite[Theorem
18.3]{Faith2},  $J(R)$ is a finite direct sum of minimal ideals of $R$ and  $J(R)\subseteq Z(R)$ by \cite[Lemma
2.1]{Evstafev}. Thus $R^\circ$ is a nilpotent group by a well-known Ph.~Hall's
Theorem. \end{proof}


\section{Torsion subgroups}


It is well known the following.

\begin{lemma} \label{krempa} Let $R$ be a ring with $1$. The following statements hold:
\begin{itemize}
\item[$(i)$]  $U(R)$ is a torsion group if and only if the Jacobson radical $J(R)$ is nil with the torsion additive group $J(R)^+$ and $U(R/J(R))$ is torsion \cite[Lemma 1.1]{Krempa95};
\item[$(ii)$] if $F(R)=0$ and $U(R)$ is torsion, then $J(R)=0$ and $R$ is reduced \cite[Corollary 1.2]{Krempa95}; \item[$(iii)$]  if $F(R)=0$, then $U(R)$ is torsion if and only if $U(R)$ is locally finite (\cite[Theorem 3.3]{Krempa95} and \cite[Proposition 2]{Shlyafer}).
\end{itemize}
\end{lemma}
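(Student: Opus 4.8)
The plan is to treat the three parts in order, using throughout the identity $(1+a)^{n}=1+\mu_{n}(a)$ (with $\mu_{n}(x)=\sum_{k=1}^{n}\binom{n}{k}x^{k}$ as defined in the paper) to translate between finite multiplicative order on one side and nilpotence together with additive torsion on the other. For the forward implication of $(i)$ I would first observe that units lift modulo the Jacobson radical, so $U(R/J(R))$ is a homomorphic image of $U(R)$ and hence torsion. Next, fixing $a\in J(R)$, the unit $1+a$ has some finite order $n$, so $\mu_{n}(a)=0$; the crucial point is that for every $r\in\mathbb{Z}[a]$ the element $1+ra$ is a unit of $R$ of finite order, whence its inverse is a power of it and lies in $\mathbb{Z}[a]$. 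Thus $a$ belongs to the Jacobson radical of the finitely generated commutative $\mathbb{Z}$-algebra $\mathbb{Z}[a]$, which is a Jacobson ring, so $a$ is nilpotent, say $a^{k}=0$. Multiplying $\mu_{n}(a)=0$ successively by $a^{k-2},a^{k-3},\dots$ then yields $na^{k-1}=0$, $n^{2}a^{k-2}=0$, and so on, so every power of $a$ --- in particular $a$ itself --- has finite additive order; hence $J(R)$ is nil with torsion additive group. For the converse I would take $u\in U(R)$, use that $U(R/J(R))$ is torsion to write $u^{m}=1+a$ with $a\in J(R)$, and note that, $a$ being nilpotent and additively of finite order, the additive subgroup $V$ generated by $a,a^{2},\dots$ is finite and closed under multiplication, so $1+V$ is a finite subgroup of $U(R)$; consequently $1+a$, and therefore $u$, has finite order.

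Part $(ii)$ should then follow at once: by $(i)$ the group $J(R)^{+}$ is torsion, so $F(R)=0$ forces $J(R)=0$; and if $R$ had a nonzero $a$ with $a^{k}=0$ and $k$ minimal, then $1+a$ would be a unit of finite order $n$, $\mu_{n}(a)=0$, and multiplying by $a^{k-2}$ would give $na^{k-1}=0$, i.e.\ $a^{k-1}\in F(R)=0$, contradicting minimality. Hence $N(R)=0$, so $R$ is reduced.

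For $(iii)$ the implication ``locally finite $\Rightarrow$ torsion'' is trivial, so assume $U(R)$ torsion. By $(ii)$ we may take $R$ reduced with $J(R)=0$, and $F(R)=0$ gives $\charak R=0$. It suffices to prove that every finitely generated subgroup $G=\langle u_{1},\dots,u_{r}\rangle\le U(R)$ is finite. I would pass to the finitely generated, reduced, $\mathbb{Z}$-torsion-free subring $\langle u_{1}^{\pm1},\dots,u_{r}^{\pm1}\rangle_{\rm rg}$ generated by $G$ and its inverses, and then to its tensor product with $\mathbb{Q}$, so that $G$ becomes a torsion subgroup of the unit group of a finitely generated reduced $\mathbb{Q}$-algebra; reducing through the prime (equivalently, domain) quotients, one is left with torsion subgroups of unit groups of domains finitely generated over $\mathbb{Q}$, where the commutative case is settled because the group of roots of unity of a finitely generated field extension of $\mathbb{Q}$ is finite (the algebraic closure of $\mathbb{Q}$ inside that field being a number field), and the noncommutative case reduces to it via Herstein's theorem that a division ring with torsion multiplicative group is commutative \cite[Theorem~8]{Herstein78}. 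The hard part will be carrying out this last reduction rigorously --- it is in essence a Burnside--Schur-type finiteness statement for torsion linear groups --- and, the lemma being classical, in practice I would simply cite \cite[Theorem~3.3]{Krempa95} and \cite[Proposition~2]{Shlyafer}.
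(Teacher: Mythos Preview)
The paper does not prove this lemma at all: it introduces it with ``It is well known the following'' and simply records the statement together with the citations to \cite{Krempa95} and \cite{Shlyafer}. So there is no argument in the paper to compare against; your proposal supplies a proof where the authors content themselves with references.

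Your argument for $(i)$ and $(ii)$ is correct. The key idea you use in the forward direction of $(i)$ --- that $1+ra$ is a unit of finite order for every $r\in\mathbb{Z}[a]$, hence its inverse lies in $\mathbb{Z}[a]$, so $a\in J(\mathbb{Z}[a])$, and then the general Nullstellensatz over $\mathbb{Z}$ forces $a$ nilpotent --- is a clean way to avoid any ad hoc computation; the subsequent bootstrap $na^{k-1}=0$, $n^{2}a^{k-2}=0$, \dots\ to get additive torsion is standard and fine. For the converse your observation that the additive group $V$ generated by $a,a^{2},\dots,a^{k-1}$ is a finite multiplicatively closed subset of $J(R)$, so that $1+V$ is a finite submonoid of $U(R)$ and hence a subgroup, is correct (note you are using that every $a^{i}\in J(R)$ has finite additive order, not just $a$ itself). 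Part $(ii)$ then follows exactly as you say.

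For $(iii)$ your sketch is in the right spirit but, as you acknowledge, the passage from ``torsion subgroup of the units of a finitely generated reduced $\mathbb{Q}$-algebra'' to ``finite'' really is a Burnside--Schur-type statement that needs its own proof; since the paper itself is content to cite \cite[Theorem~3.3]{Krempa95} and \cite[Proposition~2]{Shlyafer} here, doing the same is entirely appropriate.
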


We precise \cite[Corollary 2.10]{Ster} as the following.

\begin{proposition} Let $R$  be a ring with the additive $p$-group $R^+$. The set  $N(R)$ is a subring of $R$ if and only if $N(R)^\circ$ is a normal Sylow $p$-subgroup of $R^\circ$.
\end{proposition}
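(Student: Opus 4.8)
The plan is to show that, under the standing hypothesis on $R^{+}$, the subset $N(R)$ of $R^{\circ}$ coincides with the set $\Pi$ of all $p$-elements of the group $R^{\circ}$ (those of finite $p$-power order), and then to reduce both halves of the equivalence to the single assertion that $\Pi$ be a subgroup of $R^{\circ}$. (Throughout, ``$p$-group'' means a group every element of which has $p$-power order; note also that $N(R)\subseteq R^{\circ}$ always, since a nilpotent $a$ with $a^{k}=0$ has the nilpotent quasi-inverse $-a+a^{2}-\cdots\pm a^{k-1}$, so $N(R)$ is automatically closed under quasi-inversion and ``$N(R)^{\circ}$ is a subgroup of $R^{\circ}$'' means exactly ``$N(R)$ is closed under $\circ$''.) The first ingredient is a \emph{Key Lemma}: if $R^{+}$ is a $p$-group then $N(R)=\Pi$. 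For $N(R)\subseteq\Pi$: if $a^{k}=0$, then ${\langle a\rangle}_{\rm rg}$ is additively generated by $a,a^{2},\dots,a^{k-1}$, each of $p$-power additive order, hence ${\langle a\rangle}_{\rm rg}$ is a \emph{finite} nilpotent ring; its adjoint group ${\langle a\rangle}_{\rm rg}^{\circ}={\langle a\rangle}_{\rm rg}$ is then a finite $p$-group inside $R^{\circ}$, so $a$ is a $p$-element. For $\Pi\subseteq N(R)$: let $g\in R^{\circ}$ with $g^{(p^{m})}=0$; in the Dorroh unitalization $R^{1}$ this reads $(1+g)^{p^{m}}=1$, so $g$ is a root of the \emph{monic} polynomial $(1+X)^{p^{m}}-1\in\mathbb{Z}[X]$. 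Hence $A:=\mathbb{Z}[g]\subseteq R^{1}$ is module-finite over $\mathbb{Z}$, in particular commutative Noetherian, and $p^{s}g=0$, where $p^{s}$ is the additive order of $g$. For each minimal prime $\mathfrak{q}$ of $A$ the domain $A/\mathfrak{q}$ is module-finite over $\mathbb{Z}$, so: if $\charak(A/\mathfrak{q})=0$ it is a torsion-free $\mathbb{Z}$-module and the image $\bar g=0$; if $\charak(A/\mathfrak{q})=\ell\neq p$ then $p^{s}$ is a unit there, so $\bar g=0$; and if $\charak(A/\mathfrak{q})=p$ then $A/\mathfrak{q}$ is a finite field in which $(1+\bar g)^{p^{m}}=1+(\bar g)^{p^{m}}=1$, forcing $\bar g=0$. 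Thus $g$ lies in every minimal prime, i.e.\ in the nilradical of $A$, so $g$ is nilpotent.

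I would also record the elementary group-theoretic fact: for any group $G$, the set $\Pi(G)$ of $p$-elements is a subgroup if and only if $G$ has a normal Sylow $p$-subgroup, in which case $\Pi(G)$ is that subgroup. Indeed, if $\Pi(G)$ is a subgroup it is conjugation-invariant, hence normal, it is a $p$-group, and it contains every $p$-subgroup, so it is the unique maximal $p$-subgroup; conversely a normal maximal $p$-subgroup $P$ absorbs every maximal $p$-subgroup $Q$, because $PQ$ is a group which, being an extension of a $p$-group by a $p$-group, is itself a $p$-group, so $PQ=Q\supseteq P$ and hence $P=Q$, and every $p$-element generates a $p$-subgroup, so $\Pi(G)=P$. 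Granting these, $(\Rightarrow)$ is immediate: if $N(R)$ is a subring it is a nil ring, hence radical, so $N(R)^{\circ}=N(R)$ is a subgroup of $R^{\circ}$; it is a $p$-group by the first half of the Key Lemma; it is normal because a $\circ$-conjugate $(1+g)^{-1}a(1+g)$ of a nilpotent $a\in R$ is again a nilpotent element of $R$; and it is a \emph{maximal} $p$-subgroup because, by the Key Lemma, any $p$-subgroup of $R^{\circ}$ lies in $\Pi(R^{\circ})=N(R)$. Hence $N(R)^{\circ}$ is the normal Sylow $p$-subgroup of $R^{\circ}$.

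For $(\Leftarrow)$, a normal Sylow $p$-subgroup is in particular a subgroup, so $N(R)$ is closed under $\circ$, and the only thing left is to deduce that $N(R)$ is closed under $+$ and $\cdot$; this is the crux. I would first reduce to the finitely generated case: it suffices, for arbitrary $a,b\in N(R)$, to show that $a+b$ and $ab$ are nilpotent, and for this one may replace $R$ by $S={\langle a,b\rangle}_{\rm rg}$, which inherits the hypothesis (for $x,y\in N(S)$ one has $x\circ y\in N(R)\cap S=N(S)$) and has $S^{+}$ of bounded exponent $p^{s}$, and then pass to $S/\mathbb{P}(S)$ to assume $S$ semiprime. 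That in this situation closure of $N(S)$ under $\circ$ forces $N(S)$ to be a subring is the substance of \cite[Corollary 2.10]{Ster}; the point at which the present statement \emph{precises} that corollary is that the resulting normal subgroup is automatically a Sylow $p$-subgroup, which is exactly what the Key Lemma together with the group fact supply. The main obstacle is therefore this passage from closure under $\circ$ to closure under addition; the hypothesis that $R^{+}$ be a $p$-group is precisely what makes the Key Lemma — hence the identification $N(R)=\Pi(R^{\circ})$ — available, and without it the equivalence can fail, already for $R=\mathbb{Z}$ with $p=2$, where $N(\mathbb{Z})=0$ is a subring but $N(\mathbb{Z})^{\circ}=\{0\}$ is not a Sylow $2$-subgroup of the adjoint group $\mathbb{Z}^{\circ}$, which is cyclic of order $2$.
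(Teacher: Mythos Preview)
Your argument is correct, but it takes a rather different route from the paper's. The paper begins by observing that $pR\subseteq N(R)$ is a nil ideal and that $(R/pR)^\circ\cong R^\circ/(pR)^\circ$, so one may assume $pR=0$ from the outset. In characteristic $p$ the identity $\mu_{p^n}(g)=g^{p^n}$ is immediate (all intermediate binomial coefficients vanish), so any $p$-element $g$ of $R^\circ$ satisfies $g^{p^n}=g^{(p^n)}=0$ and is nilpotent; this replaces your entire Key Lemma by a one-line computation. Your integrality argument in the Dorroh extension, with the case analysis over minimal primes of $\mathbb Z[g]$, reaches the same conclusion $N(R)=\Pi(R^\circ)$ without the preliminary reduction, which is conceptually pleasant but considerably heavier.

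For $(\Leftarrow)$ both proofs bottom out in \v{S}ter's result that closure of $N(R)$ under $\circ$ already forces $N(R)$ to be a subring; the paper invokes \cite[Theorem 2.1]{Ster} directly, with no reduction whatsoever. Your passage to a finitely generated subring $S$ and then to $S/{\mathbb P}(S)$ is therefore unnecessary scaffolding (though harmless: $N(S/{\mathbb P}(S))=N(S)/{\mathbb P}(S)$ because ${\mathbb P}(S)$ is nil, so closure under $\circ$ does survive). In short, your Key Lemma is a genuine alternative to the paper's char-$p$ reduction for the forward direction, while in the backward direction you and the paper are both deferring to \v{S}ter, only you take a detour first.
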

\begin{proof}  Clearly,  $pR$ is an ideal of $R$ such that  $pR\subseteq N(R)$. The groups $(R/pR)^\circ$ and $R^\circ /(pR)^\circ $ are isomorphic, so we can assume that $pR=0$.

$(\Rightarrow )$ Suppose that $N(R)$ is a subring. Then $N(R)^\circ$ is a $p$-subgroup  of $R^\circ$ in view of \cite[Lemma 2.4]{Amberg_Dickenschied} and   $N(R)^\circ$ is contained in some maximal (Sylow)  $p$-subgroup $S$ of $R^\circ$. If $g\in S\setminus N(R)^\circ$, then
\[
0=g^{(p^n)}=\mu_{p^n}(g)=g^{p^n}, \qquad\qquad (\text{for some}\quad  n\in \mathbb{N})
\] 
 so  $g\in N(R)$, a contradiction. Hence $N(R)^\circ$ is a Sylow $p$-subgroup of $R^\circ$.
If $S_1$ is a maximal (Sylow) $p$-subgroup  of $R^\circ$ and $h\in S_1$ has order $p^m$, then $0=h^{(p^m)}=h^{p^m}$, so  $h\in N(R)$ and $S_1=S$. Consequently,  $S$ is normal in   $R^\circ$.

$(\Leftarrow )$ Since $N(R)$ is closed under the circle operation "$\circ$"{}, $N(R)$ is a subring of $R$ by \cite[Theorem 2.1]{Ster}.
\end{proof}

Let $Dz(R)$ be the set of all left and right zero divisors and $0\in Dz(R)$.

\begin{proof}[Proof of Proposition \ref{XY0}]
$(i)$ Let  $R$ be not commutative and let $g\in N(R)$. If $g^2=0$, then $$0=g^{(n)}=\mu_n(g)=ng$$ for some $n\in\mathbb{N}$ and so  $g=0$. This means that $N(R)=0$. It follows that if  $ab=0$ for some nonzero $a,b\in R$, then  $(ba)^2=0$ what implies that $ba=0$.   Therefore  $Dz(R)$ is commutative and, consequently, $Dz(R)$ is an ideal of $R$ by \cite[Theorem 5.5]{Klein_Bell_07}.
Moreover, $Dz(R)^2C(R)=0$ \cite[Lemma 5.4]{Klein_Bell_07},  so $C(R)^3=0$. Thus  $C(R)=0$, which is  a contradiction.

$(ii)$ Indeed,  $J(R)=N(R)=0$ by Lemma \ref{krempa}.
If $ac=0$ for some $a,c\in R$, then  $(ca)^2=c(ac)a=0$ implies that $ca=0$. As a consequence, $0=a(cR)=cRa$ and $c=0$ or $a=0$ by the primeness of $R$. Hence $R$ is a domain. The rest follows from \cite[Proposition 4]{Shlyafer}.
\end{proof}

\begin{proposition} \label{XY1} Let $R$ be an unitary domain of characteristic $p>0$. If $U(R)$ is torsion, then it is a  $p'$-group and the following statements hold:
\begin{itemize}
\item[$(i)$]  $R^\circ \setminus \{ 0\}\subseteq U(R)$ and $I\cap R^\circ =0$ for any proper right (left) ideal $I$ of $R$; in particular, $J(R)=0$;
\item[$(ii)$] if $S$ is a subring of $R$ and $S\cap R^\circ \neq 0$, then $1\in S$.
\end{itemize}
\end{proposition}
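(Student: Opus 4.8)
The plan is to prove the three assertions in turn, each feeding into the next. I first show $U(R)$ is a $p'$-group. If it is not, then, $U(R)$ being torsion, a suitable power of an element of order divisible by $p$ yields $u\in U(R)$ of order exactly $p$; the commutative subring ${\mathbb Z}_p[u]\subseteq R$ has characteristic $p$, so $(u-1)^p=u^p-1=0$, and since $R$ is a domain this forces $u=1$, contradicting $|u|=p$.

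For $(i)$, note that since $R$ is unital, $a\in R^\circ$ if and only if $1+a\in U(R)$; hence every $a\in R^\circ$ has the form $a=u-1$ with $u=1+a\in U(R)$. Fix $a\in R^\circ\setminus\{0\}$, so $u\neq 1$. As $U(R)$ is torsion, $u$ is algebraic over the prime subfield ${\mathbb Z}_p$, so ${\mathbb Z}_p[u]\subseteq R$ is a finite commutative domain, hence a field; since $u-1\neq 0$ lies in this field, it is invertible in $R$, whence $a=u-1\in U(R)$. This proves $R^\circ\setminus\{0\}\subseteq U(R)$. Now if $I$ is a proper right (resp.\ left) ideal and $0\neq a\in I\cap R^\circ$, then $a\in U(R)\cap I$, so $1=aa^{-1}\in I$ (resp.\ $1=a^{-1}a\in I$), contradicting $I\neq R$; hence $I\cap R^\circ=0$. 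In particular $J(R)$ is a proper ideal with $J(R)\subseteq R^\circ$ (because $1+J(R)\subseteq U(R)$), so $J(R)=J(R)\cap R^\circ=0$.

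For $(ii)$, take $0\neq a\in S\cap R^\circ$. By $(i)$ we have $a\in U(R)$, and since $U(R)$ is torsion $a^n=1$ for some $n\in{\mathbb N}$; because $S$ is closed under multiplication, $a^n\in S$, i.e.\ $1\in S$. I do not expect a genuine obstacle here: the one substantive point is the remark used in $(i)$ that a torsion unit spans a finite commutative domain, hence a field, over ${\mathbb Z}_p$, so that $u-1$ is automatically invertible once $u\neq 1$. The rest is bookkeeping — keeping track of the left/right sides in the ideal statement of $(i)$, and observing that in $(ii)$ the subring $S$ need not be unital, which is precisely what gives the conclusion $1\in S$ content.
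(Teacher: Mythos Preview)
Your proof is correct, but the route differs from the paper's in an instructive way. For part $(i)$ you first prove $R^\circ\setminus\{0\}\subseteq U(R)$ by observing that $\mathbb{Z}_p[u]$ is a finite integral domain, hence a field, so $u-1$ is automatically a unit; the ideal statement then follows trivially since a proper one-sided ideal cannot contain a unit. The paper proceeds in the opposite order: it first shows $I\cap R^\circ=0$ by expanding $0=b^{(n)}=\mu_n(b)=b\bigl(n+\sum_{k\ge2}\binom{n}{k}b^{k-1}\bigr)$, cancelling $b$ in the domain to get $n\cdot 1\in I$, and then using $\gcd(n,p)=1$ (from the $p'$-group claim) to force $1\in I$; only afterwards does it deduce invertibility of nonzero quasi-regular elements by applying this to the one-sided ideals $xR$ and $Rx$. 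For part $(ii)$ the paper repeats the $\mu_n$ computation to land $n\cdot 1$ inside $S$, whereas you simply note $a\in U(R)$ has finite multiplicative order, so $a^n=1\in S$. Your argument is cleaner and more conceptual, exploiting the finite-field structure directly; the paper's is more computational but has the minor virtue of making the role of the $p'$-condition explicit via the B\'ezout step.
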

\begin{proof}
Obviously,  $N(R)=0$. Since $g^{(p)}=\mu_p(g)=g^p$ for    $g\in R^\circ$, we deduce that $g^{(p)}\neq 0$ and  $R^\circ$ is a $p'$-group.

$(i)$ Let $I$ be a proper right (left) ideal of  $R$. If $0\neq b\in I\cap R^\circ$, then
\[
\textstyle0=b^{(n)}=\mu_n(b)=b\big(n+\sum\limits_{k=2}^n\binom{n}{k}b^{k-1}\big)
\]
for some $n\in \mathbb N$. If  the  great common divisor $\text{GCD}(n,p)=1$, then  $n\in I$ and  there exist $u,v\in\mathbb{Z}$ such that  $r=(nr)u+(pr)v\in I$ for any $r\in R$. Consequently,   $I=R$, a contradiction. This implies that $I\cap R^\circ =0$.

Inasmuch as $x\in (Rx)^\circ \cap R^\circ$ ($x\in (xR)^\circ \cap R^\circ$, respectively) for each $x\in R^\circ$,  we conclude that $xR=R=Rx$, so  each  nonzero quasi-invertible element is invertible in $R$.

 $(ii)$  If  $S$ is a nonzero subring of $R$ and $0\neq b\in  S\cap R^\circ$, then, as above, $n\in S$ and consequently $1\in S$.\end{proof}

According to Proposition \ref{XY1}, we can ask the  following questions:
\begin{itemize}
\item[Q1.] Does there exist a  unitary infinite  non-commutative simple ring $R$ of characteristic $p>0$ with the
torsion unit group $U(R)$?
\item[Q2.] Does there exist a  unitary (infinite)  non-commutative  ring $R$ which is not a skew field, such that  $R^\circ \setminus \{ 0\}\subseteq U(R)$?
\end{itemize}

\section{Locally finite rings}


{We start with  some properties of locally finite rings.}

\begin{lemma} \label{LF1} If $R$ is a  locally finite ring with unity, then the following statements hold:
\begin{itemize}
\item[$(i)$] $R^+$ is a torsion  $\pi$-group for some set
$\pi$ of primes;
\item[$(ii)$] $U(R)$ is locally finite;
\item[$(iii)$] $1+J(R)$ is a  locally nilpotent $\pi$-group.
\end{itemize}
\end{lemma}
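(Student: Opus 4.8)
The plan is to reduce all three parts to the quoted fact \cite[Theorem 2.2]{Huh_Kim_Lee04} that a finite subset of a locally finite ring generates a finite subring. For $(i)$, apply this to the one-element set $\{1\}$: the subring $\langle 1\rangle_{\rm rg}$ is finite and coincides with $\mathbb Z\cdot 1$, so $\charak R=n$ for some $n\in\mathbb N$; then $nR=0$, so $R^+$ is a torsion group of exponent dividing $n$, i.e.\ a $\pi$-group for $\pi=\{p\ \text{prime}: p\mid n\}$. For $(ii)$, let $S\subseteq U(R)$ be finite and set $T=\langle S\cup S^{-1}\cup\{1\}\rangle_{\rm rg}$, which is a finite ring by the same theorem; every element of the subgroup $\langle S\rangle$ of $U(R)$ is a word in $S\cup S^{-1}$, hence lies in the subring $T$ and is invertible there, so $\langle S\rangle\subseteq U(T)$, a finite set. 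Thus $U(R)$ is locally finite.

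For $(iii)$, first note that $J(R)$ is a radical ring and that $a\mapsto 1+a$ identifies $1+J(R)$ with its adjoint group $J(R)^{\circ}$; since $1+J(R)\subseteq U(R)$ it is locally finite by $(ii)$, hence locally nilpotent by \cite[Corollary 1]{Amberg_Dickenschied_Sysak}. For the $\pi$-group property (with the same $\pi$ as in $(i)$): each $a\in J(R)$ is nilpotent, because $\langle a\rangle_{\rm rg}$ is a finite ring whose only idempotent is $0$ (an idempotent $e\in J(R)$ satisfies $e(1-e)=0$ with $1-e\in U(R)$, so $e=0$), hence $\langle a\rangle_{\rm rg}$ is nil and therefore nilpotent; say $a^{m}=0$. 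Writing $n=\prod_{p\mid n}p^{e_p}$ and choosing $N=\prod_{p\mid n}p^{s_p}$ with each $s_p$ large enough (e.g.\ $s_p\ge e_p+\log_p m$), the elementary estimate $v_p\!\binom{N}{j}\ge v_p(N)-v_p(j)\ge s_p-\log_p m\ge e_p$ for $1\le j<m$ gives $n\mid\binom{N}{j}$, whence
\[
(1+a)^{N}=1+\mu_N(a)=1+\sum_{j=1}^{m-1}\binom{N}{j}a^{j}=1 .
\]
So $1+a$ has order a $\pi$-number, and $1+J(R)$ is a $\pi$-group.

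The routine input (primary decomposition is not even needed) is confined to $(i)$ and $(ii)$; the only genuinely substantive point is in $(iii)$, namely recognising $1+J(R)$ as the adjoint group of the radical ring $J(R)$ so that \cite{Amberg_Dickenschied_Sysak} delivers local nilpotence, together with the small $p$-adic valuation computation that pins down the exponent $N$ and hence the $\pi$-group property. If one prefers to avoid \cite{Amberg_Dickenschied_Sysak} here, local nilpotence can alternatively be recovered from the ring decomposition $R\cong\prod_{p\mid n}R_p$ along the primary components of $R^+$, under which each finite subgroup of $1+J(R)$ splits as a product of finite $p$-groups, hence is nilpotent.
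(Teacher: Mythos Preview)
Your proof is correct and matches the paper's approach: parts $(i)$ and $(ii)$ are argued the same way (the paper just writes ``Obviously'' for $(i)$ and for $(ii)$ omits adjoining $S^{-1}$ and $1$, since these land in the finite subring $\langle S\rangle_{\rm rg}$ automatically), and for $(iii)$ both invoke \cite[Corollary~1]{Amberg_Dickenschied_Sysak} for local nilpotence. The only difference is that for the $\pi$-group claim in $(iii)$ the paper simply notes that $J(R)$ is nil and cites \cite[Lemma~2.4]{Amberg_Dickenschied}, whereas you unpack this with a direct $p$-adic estimate on $\binom{N}{j}$---effectively reproving that lemma in the present setting.
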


\begin{proof} $(i)$ Obviously.

$(ii)$ If $X$ is a finite subset of $U(R)$, then $\langle X\rangle
\subseteq {\langle X\rangle}_{\rm rg}$ and so the subgroup
$\langle X\rangle$ is finite.

$(iii)$      The unipotent subgroup $1+J(R)$ of $U(R)$ is locally nilpotent \cite[Corollary 1]{Amberg_Dickenschied_Sysak}. Since  $J(R)$ is nil,  $1+J(R)$ is a $\pi$-group in view of \cite[Lemma 2.4]{Amberg_Dickenschied}. \end{proof}

\begin{lemma} \label{SYS} If $P$ is a  minimal prime ideal of a $2$-torsion-free ring $R$, then $\charak R/P\neq 2$.
\end{lemma}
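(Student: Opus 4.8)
The plan is a proof by contradiction. Suppose $\charak R/P=2$, i.e. $2R\subseteq P$; the goal is to contradict $2$-torsion-freeness of $R$.

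\emph{Reduction to the semiprime case.} Put $\bar R=R/{\mathbb P}(R)$ and $\bar P=P/{\mathbb P}(R)$. Then $\bar P$ is a minimal prime ideal of the semiprime ring $\bar R$ and $\bar R/\bar P\cong R/P$, so it suffices to prove $\charak\bar R/\bar P\neq 2$ — provided I first check that $\bar R$ is still $2$-torsion-free. This is the step I expect to be the main obstacle, since being annihilated by $2$ need not survive quotients. It does survive here, because of the classical description of ${\mathbb P}(R)$ as the set of strongly nilpotent elements: assume $2x\in{\mathbb P}(R)$ and let $x=x_0,x_1,x_2,\dots$ be any sequence with $x_n=x_{n-1}a_nx_{n-1}$ for some $a_n\in R$. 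Setting $z_n=2^{2^n}x_n$ we get $z_0=2x\in{\mathbb P}(R)$ and $z_n=z_{n-1}a_nz_{n-1}$, so $(z_n)$ witnesses strong nilpotency of $2x$ and hence $z_N=0$ for some $N$; thus $2^{2^N}x_N=0$, and since $2$-torsion-freeness forces $2^k$-torsion-freeness for every $k$, this gives $x_N=0$. As the sequence $(x_n)$ was arbitrary, $x$ is strongly nilpotent, i.e. $x\in{\mathbb P}(R)$. Hence $2\bar x=0$ implies $\bar x=0$, so $\bar R$ is $2$-torsion-free, and we may replace $R$ by $\bar R$ and assume $R$ is semiprime.

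\emph{The semiprime case.} Here I would use the well-known fact that for a minimal prime ideal $P$ of a semiprime ring $R$ one has $P=\{a\in R:\ aRs=0\ \text{for some}\ s\in R\setminus P\}$. Choose $s_0\in R\setminus P$ (possible, since $P$ is a proper ideal). Then $2s_0\in 2R\subseteq P$, so there exists $t\in R\setminus P$ with $(2s_0)Rt=0$, that is $2\,(s_0Rt)=0$; by $2$-torsion-freeness $s_0Rt=0$. But $s_0\notin P$, $t\notin P$ and $P$ is prime, whence $s_0Rt\not\subseteq P$ and in particular $s_0Rt\neq 0$ — a contradiction. Therefore $\charak R/P\neq 2$.

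(If one prefers not to pass to a quotient, the same computation runs verbatim after replacing ``$aRs=0$'' by its lifted form ``$aRs\subseteq{\mathbb P}(R)$'' and then applying the strongly-nilpotent argument to each element of $s_0Rt$. Either way, the only non-routine ingredients are the strongly-nilpotent characterization of ${\mathbb P}(R)$ and the identity $P=O(P)$ for minimal primes of semiprime rings.)
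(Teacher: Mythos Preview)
Your approach is genuinely different from the paper's, and considerably more elaborate. The paper gives a one-step $m$-system argument: set $X=\{2^na:\ a\in R\setminus P,\ n\geq 0\}$, check that $X$ is an $m$-system with $0\notin X$ (the latter is exactly where $2$-torsion-freeness enters), and invoke McCoy's theorem to obtain a prime ideal $M$ disjoint from $X$. Since $R\setminus P\subseteq X$, we get $M\subseteq P$, hence $M=P$ by minimality; thus $P\cap X=\emptyset$, so $2a\notin P$ whenever $a\notin P$, and $\charak R/P\neq 2$. No reduction to the semiprime case is needed, and the only external ingredient is McCoy's lemma.

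Your Step 1 (passing to $R/{\mathbb P}(R)$ via the strongly-nilpotent description of the prime radical) is correct and nicely argued. The concern is Step 2. The identity you invoke, $P=\{a\in R:aRs=0\ \text{for some}\ s\notin P\}$, is indeed classical for \emph{commutative} semiprime rings. In the noncommutative setting, however, the result that is genuinely ``well known'' (e.g.\ Lam, \emph{Exercises in Classical Ring Theory}, Ex.~10.20) is the weaker statement $P=\{a:as=0\ \text{for some}\ s\notin P\}$. That weaker form does not close your argument: from $(2s_0)t=0$ you obtain $s_0t=0$, but $s_0,t\notin P$ does not force $s_0t\neq 0$ in a noncommutative ring (primeness controls $s_0Rt$, not $s_0t$). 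The strong form with $aRs=0$ may well hold for arbitrary semiprime rings, but it is not as routine as you suggest, and you should supply either a proof or a precise reference; as written, this step is a gap. Note, incidentally, that the paper's $m$-system $X$ can be run verbatim \emph{after} your semiprime reduction (maximality of $R\setminus P$ among $m$-systems avoiding $0$ forces $X=R\setminus P$), so one can salvage your outline without the $P=O(P)$ machinery --- but then the reduction itself becomes superfluous.
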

\begin{proof} Let $X=\{ 2^na \mid a\in R\setminus P \text{  and  }  n\in\mathbb{N}\cup \{0\}\}$. Clearly,  $X$ is non-empty, $0\notin X$ and $X$ is an $m$-system (in the sense of \cite{McCoy}). Therefore, there exists  a two-sided ideal $M$ of $R$ which is maximal to being disjoint from $X$ (then $M$ is prime by \cite[Lemma 4]{McCoy} and $M\subseteq R\setminus X$). Since $R\setminus P\subseteq X$, we conclude that $M\subseteq P$ and consequently $M=X$. Hence $\charak R/P\neq 2$. \end{proof}

If for each $x\in R$ there exists $n\in\mathbb{N}$ with $x^n=x$, then $R$ is commutative by a well-known theorem of N.~Jacobson.   A ring $R$ is called {\it
periodic} if, for each $x\in R$, there exist different positive integers $m$ and $n$,  such that  $x^m=x^n$.

\begin{lemma} \label{LF2} Let $R$ be a   locally finite ring with unity. The following statements hold:
\begin{itemize}
\item[$(i)$]  $R$ is periodic;
\item[$(ii)$]  if $R$ is   $2$-torsion-free semiprime, then it is commutative;
\item[$(iii)$] if $R$ is  prime of $\charak R\not=2$, then it is a field;
\item[$(iv)$] if $R$ is   $2$-torsion-free, then $C(R)\subseteq {\mathbb P}(R)=N(R)=J(R)$ (i.e., $R$ is $2$-primal and right quasi-duo).
\end{itemize}
\end{lemma}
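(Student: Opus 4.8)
The plan is to prove the assertions in the order $(i),(iii),(ii),(iv)$, since $(ii)$ and $(iv)$ will be deduced from $(iii)$. Part $(i)$ is immediate: by local finiteness the multiplicative subsemigroup $\{x,x^{2},x^{3},\dots\}$ generated by any single element $x\in R$ is finite, so $x^{m}=x^{n}$ for some $m>n\ge 1$, which is exactly periodicity.

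For $(iii)$, let $R$ be prime with $\charak R\ne 2$, locally finite, with unity. By Lemma~\ref{LF1}$(i)$ the additive group $R^{+}$ is torsion, so $\charak R$ is finite; being prime it is a prime number, and since it is not $2$ it is odd, whence $R$ is $2$-torsion-free. Hence Proposition~\ref{LF3}$(i)$ applies: every prime ideal of $R$ is maximal as a right ideal. As $R$ is prime, $(0)$ is a prime ideal, hence a maximal right ideal, so $R$ has no proper nonzero right ideals and is a division ring. By $(i)$ the ring $R$ is periodic, so each $x\in U(R)$ satisfies $x^{m-n}=1$ for suitable $m>n$; thus $U(R)$ is torsion and $R$ is commutative by \cite[Theorem~8]{Herstein78}. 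A commutative division ring is a field, proving $(iii)$. (One can bypass \cite{Herstein78}: each finitely generated subring of the periodic division ring $R$ is a finite ring without zero divisors, hence a finite division ring, hence a field by Wedderburn's theorem, so $R$ is a directed union of fields and any two of its elements commute.)

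For $(ii)$, let $R$ be $2$-torsion-free semiprime, locally finite, with unity. Then ${\mathbb P}(R)=0$, so $R$ is a subdirect product of the rings $R/P$ with $P$ ranging over the minimal prime ideals of $R$; each such $R/P$ is prime, has unity, is locally finite as a homomorphic image of $R$, and has characteristic $\ne 2$ by Lemma~\ref{SYS}, hence is a field by $(iii)$. Therefore $R$ embeds into a product of fields and is commutative (indeed reduced). For $(iv)$, let $R$ be $2$-torsion-free, locally finite, with unity, and set $\bar R=R/{\mathbb P}(R)$. This ring is semiprime, locally finite, with unity, and $2$-torsion-free: if $2r\in{\mathbb P}(R)$ then $2r\in P$ for every minimal prime $P$, and each $R/P$ is $2$-torsion-free (being prime of odd characteristic by Lemma~\ref{SYS}), so $r\in P$ for all such $P$, that is, $r\in{\mathbb P}(R)$. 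By $(ii)$ the ring $\bar R$ is commutative and reduced. Commutativity of $\bar R$ gives $[R,R]\subseteq{\mathbb P}(R)$, hence $C(R)\subseteq{\mathbb P}(R)$ since ${\mathbb P}(R)$ is an ideal; reducedness of $\bar R$ gives $N(R)\subseteq{\mathbb P}(R)$, and as ${\mathbb P}(R)\subseteq N(R)$ in every ring we get ${\mathbb P}(R)=N(R)$, so $R$ is $2$-primal. Finally ${\mathbb P}(R)\subseteq J(R)$ in every ring, while $J(R)$ is locally nilpotent, hence nil, in a locally finite ring, so $J(R)\subseteq N(R)$; thus $N(R)=J(R)$ and $C(R)\subseteq{\mathbb P}(R)=N(R)=J(R)$. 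Since $R/J(R)=\bar R$ is commutative, every maximal right ideal of $R$ (which contains $J(R)$) is two-sided, so $R$ is right quasi-duo.

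The delicate point is $(iii)$, where everything hinges on recognizing $R$ as a division ring; I delegate this to Proposition~\ref{LF3}$(i)$. If one prefers not to invoke Proposition~\ref{LF3} here (it being the sharper statement one is building toward), the genuine obstacle is to show directly that a locally finite prime ring of characteristic $\ne 2$ is a division ring — that is, to run the $\pi$-regularity and structure analysis of locally finite rings on a prime ring and conclude that $(0)$ is a maximal one-sided ideal; once that is available, periodicity together with Wedderburn's theorem (or \cite[Theorem~8]{Herstein78}) finishes the argument as above.
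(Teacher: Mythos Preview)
Your argument has a circular dependency: you invoke Proposition~\ref{LF3}$(i)$ to prove part $(iii)$, but in the paper Proposition~\ref{LF3}$(i)$ is established \emph{after} Lemma~\ref{LF2}, and its proof explicitly relies on Lemma~\ref{LF2}$(ii)$. You flag this yourself in the final paragraph, but you do not resolve it, so the proof as written is incomplete.

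The paper avoids the circularity by reversing your order for $(ii)$ and $(iii)$. It proves $(ii)$ first, directly from periodicity via \cite[Theorem~4.5]{Bell_Yaqub} (a commutativity theorem for semiperiodic rings). Then $(iii)$ is almost immediate: a prime locally finite ring of characteristic $\neq 2$ is $2$-torsion-free and semiprime, hence commutative by $(ii)$, hence an integral domain; and a periodic domain is a field since every nonzero element has finite multiplicative order. Your deductions of $(ii)$ from $(iii)$ via Lemma~\ref{SYS}, and of $(iv)$ from $(ii)$, are correct and essentially match the paper's route to $(iv)$; what is missing is an independent entry point into either $(ii)$ or $(iii)$. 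If you wish to keep the order $(iii)\Rightarrow(ii)$, you must supply a self-contained argument that a locally finite prime ring of odd characteristic is a division ring, without appealing to Proposition~\ref{LF3}.
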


\begin{proof}
$(i)$ For the proof, see \cite[Corollary 2]{Hirano91}. 

Let $R$ be a $2$-torsion-free ring.

$(ii)$ It holds in view \cite[Thereom 4.5]{Bell_Yaqub} and the part $(i)$.

$(iii)$ It follows from  the part $(i)$ and the fact that any periodic domain is a field.

$(iv)$ It is a consequence of  parts $(ii)-(iii)$ and Lemma \ref{SYS}. \end{proof}

\begin{proof}[Proof of Proposition \ref{LF3}.]
$(i)$ The quotient ring $R/P$ is a field for each  prime ideal $P$ of $R$ by \cite[Corollary 2.6]{Huh_Kim_Lee04} and Lemma \ref{LF2}$(ii)$. Thus the part $(i)$ holds.

$(ii)$ Since $R$ is strongly $\pi$-regular and $J(R)\subseteq N(R)$ by \cite[Theorem 1]{Hirano78}, we conclude that $J(R)=N(R)$ in view of Lemma  \ref{LF2}$(iii)$. Hence $R$ is exchange of stable range $1$ by \cite[Theorem 5.23 and Proposition 5.6]{Tuganbaev02}. Moreover, $R$ is abelian by \cite[3.20(3)]{Tuganbaev02}, $R/J(R)$ is a subdirect product of fields and so each element of  $R$ is a sum of an invertible and a central elements by  \cite[Thereom 6.29]{Tuganbaev02}. \end{proof}

\begin{corollary} A locally finite $2$-torsion-free ring $R$ is right (left) Ore, i.e., there exists the classical right (left)  quotient ring $Q(R)$.
\end{corollary}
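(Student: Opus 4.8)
The plan is to prove the stronger statement that $R$ coincides with its own classical quotient ring, by showing that every regular element of $R$ (every non-zero-divisor) is already invertible in $R$; once this is established the Ore condition becomes trivial. First I would record that $R$ has a unity (as in Proposition~\ref{LF3}) and that, being locally finite, $R$ is strongly $\pi$-regular; more conveniently for what follows, Proposition~\ref{LF3} already gives that $R$ is $\pi$-regular, so that for every $a\in R$ there are $n\in\mathbb N$ and $b\in R$ with $a^n=a^nba^n$.

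The key step is the standard observation that in a $\pi$-regular ring with unity a regular element $s$ must be a unit. Indeed, $s^n$ is again a non-zero-divisor, so from $s^n=s^nbs^n$ one gets $s^n(1-bs^n)=0$ and $(1-s^nb)s^n=0$; cancelling the regular element $s^n$ on the appropriate side yields $bs^n=1=s^nb$, whence $s\cdot(s^{n-1}b)=1=(bs^{n-1})\cdot s$ and $s\in U(R)$.

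With every regular element a unit, the set $\mathcal C$ of regular elements of $R$ is automatically a right (and, symmetrically, a left) Ore set: for $a\in R$ and $s\in\mathcal C$ one has $a\cdot 1=s\cdot(s^{-1}a)$ with $s^{-1}a\in R$ and $1\in\mathcal C$. Consequently the classical right and left quotient rings of $R$ both exist and coincide with $R$, i.e. $Q(R)=R$; in particular $R$ is right and left Ore.

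I do not expect a genuine obstacle here; the only point deserving care is that the inverse obtained for $s$ be two-sided, which is why I would argue from $\pi$-regularity (manifestly left--right symmetric) rather than directly from strong $\pi$-regularity, whose symmetry would require Dischinger's theorem. The $2$-torsion-freeness hypothesis is inherited from Proposition~\ref{LF3} and plays no further role in the argument.
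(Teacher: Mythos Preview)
Your argument is correct and takes a genuinely different route from the paper. The paper's proof invokes Lemma~\ref{LF2}$(iii)$ and Proposition~\ref{LF3}$(ii)$ to obtain the structural properties of $R$ (abelian, $2$-primal, $R/J(R)$ a subdirect product of locally finite fields) and then appeals to results of Kim--Kwak--Lee on generalized right duo rings to conclude the Ore condition. By contrast, you use only the $\pi$-regularity from Proposition~\ref{LF3}$(i)$ and a short cancellation argument to show that every regular element is already a unit, whence $Q(R)=R$. Your approach is more elementary and self-contained (no external reference beyond what is already in the paper), and it actually proves the stronger statement that $R$ is its own classical quotient ring; the paper's route, on the other hand, situates the result within a broader framework of quasi-duo and $2$-primal rings and would generalize more readily to rings that are not $\pi$-regular but still fit that framework. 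Your caution about arguing from $\pi$-regularity rather than strong $\pi$-regularity (to avoid invoking Dischinger's left--right symmetry) is well placed.
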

\begin{proof} The assertion holds in view of Lemma \ref{LF2}$(iii)$, Proposition \ref{LF3}$(ii)$ and \cite[Theorem 2.1 and Proposition 1.9(5)]{Kim_Kwak_Lee}. \end{proof}

Since each  $2$-torsion-free locally finite ring is abelian $\pi$-regular, we provide the following.

\begin{corollary} An abelian $\pi$-regular ring $R$ satisfies the K\" othe's conjecture, i.e., the sum of two nil left ideals is always nil.
\end{corollary}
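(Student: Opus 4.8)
The plan is to prove directly, without invoking any structure theory for abelian $\pi$-regular rings, the stronger statement that the sum $I+J$ of two nil left ideals is again a nil left ideal; I argue by contradiction.

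Suppose $I+J$ is not nil, and pick $a\in I$, $b\in J$ with $x:=a+b$ not nilpotent. Applying $\pi$-regularity to $x$ gives $n\in\mathbb N$ and $c\in R$ with $x^{n}=x^{n}cx^{n}$; put $y:=x^{n}$, which is nonzero (otherwise $x$ would be nilpotent). The technical heart of the argument is that $e:=yc$ is a nonzero idempotent which is \emph{central} (because $R$ is abelian), which satisfies $ey=y=ye$ — so $y$ lies in the corner ring $eR$ (with identity $e$), and $e\neq 0$ since $ey=y\neq 0$ — and for which $y$ is a \emph{unit} of $eR$. To get this last point I would first check that each of the idempotents $yc$ and $cy$ fixes $y$ on both sides (which, beyond the immediate $ycy=y$, requires centrality of these idempotents), and then verify that $ce$ (equivalently $ec$) is a two-sided inverse of $y$ in $eR$.

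Since $e$ is central, $(ex)^{n}=e\,x^{n}=ey=y$ is a unit of $eR$, hence $ex$ itself is a unit of $eR$; let $v$ be its inverse in $eR$. From $ex=e(a+b)=ea+eb$ we obtain
\[
e=v(ex)=v(ea)+v(eb).
\]
This is where I use that $I$ and $J$ are \emph{left} ideals: $ea\in I$ because $e\in R$ and $a\in I$, and then $v(ea)\in I$ because $v\in eR\subseteq R$; since $I$ is nil, $v(ea)$ is nilpotent, and likewise $v(eb)\in J$ is nilpotent. So the identity $e$ of the nonzero ring $eR$ equals $s+t$ with $s,t$ nilpotent elements of $eR$. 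But then $t=e-s$ is a unit of $eR$ (its inverse being $e+s+\cdots+s^{p-1}$, where $s^{p}=0$) which is also nilpotent, say $t^{q}=0$; hence $e=(tt^{-1})^{q}=t^{q}(t^{-1})^{q}=0$, a contradiction. Therefore $I+J$ is nil, which is the Köthe property.

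The step I expect to be the main obstacle is proving that $y=x^{n}$ is a genuine two-sided unit of the corner $eR$: because $c$ need not commute with $y$, the two natural idempotents $yc$ and $cy$ behave differently, and one must take $e=yc$ and exploit that idempotents are central in order to promote the obvious one-sided relations ($ey=y$, $yc=e$) to the two-sided ones needed for invertibility. After that, the corner-ring arithmetic, the geometric-series inversion, and the ``unit-and-nilpotent forces zero'' conclusion are all routine. As a by-product, the argument shows that the sum $N_r(R)$ of all nil one-sided ideals of $R$ coincides with the upper nil radical $N^{*}(R)$.
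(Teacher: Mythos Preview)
Your argument is correct. The crucial step you flag does go through: with $e=yc$ and $f=cy$ both central idempotents, one computes $ef=c(ey)=cy=f$ and $fe=(yf)c=yc=e$, while centrality forces $ef=fe$; hence $e=f$, so in fact $yc=cy$, and then $ec$ is a genuine two-sided inverse of $y$ in $eR$. After that, $(ex)^n=e x^n=y$ is a unit in $eR$, so $ex$ is a unit there, and the remainder (splitting $e$ as a sum of two nilpotents of $eR$ and deriving $e=0$) is routine.

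This is a genuinely different route from the paper's. The paper does not argue directly: it invokes Badawi's theorem that in an abelian $\pi$-regular ring the set $N(R)$ of nilpotent elements is already a (two-sided) ideal, and then cites Kim--Kwak--Lee to conclude; once $N(R)$ is an ideal, any nil left ideal sits inside $N(R)$, so the sum of two of them is contained in $N(R)$ and hence nil. Your approach avoids both citations and is entirely self-contained: it isolates exactly the corner-ring mechanism (centrality of the von~Neumann idempotent, invertibility of $x$ in that corner) that makes K\"othe's conjecture transparent here. The trade-off is that the paper's one-line appeal to Badawi gives the stronger structural fact $N(R)={\mathbb P}(R)=N^*(R)=N_r(R)$ for free, whereas your argument yields only $N_r(R)=N^*(R)$ as a by-product and would need a small extra step to recover that $N(R)$ itself is an ideal.
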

\begin{proof} The set $N(R)$ of nilpotent elements is an ideal of $R$ by \cite[Theorem 2]{Badawi}. The rest follows from \cite[Lemma 1.4(2) and Theorem  2.1(2)]{Kim_Kwak_Lee}. \end{proof}

\begin{lemma} \label{SS24} \cite[Lemma 18.34B]{Faith2}
Let $R$ be a right Noetherian ring. If $R/P$ is an Artinian ring for each prime ideal $P$ of $R$, then $R$ is  a prime ring or $R$ is a right  Artinian ring.
\end{lemma}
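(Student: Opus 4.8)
The plan is to establish, under the stated hypothesis, the stronger statement that $R$ itself is right Artinian; this clearly implies the dichotomy, and in fact when $R$ is prime the hypothesis applied to the prime ideal $(0)$ already makes $R=R/(0)$ Artinian, so strictly speaking the two alternatives are not distinct. I would rely only on standard facts about right Noetherian rings.

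\emph{Reduction to the semiprime case.} Being right Noetherian, $R$ has only finitely many minimal prime ideals $P_1,\dots,P_k$, and its prime radical ${\mathbb P}(R)=P_1\cap\cdots\cap P_k$ is nilpotent, say ${\mathbb P}(R)^m=0$. In the chain $R\supseteq{\mathbb P}(R)\supseteq\cdots\supseteq{\mathbb P}(R)^m=0$ each factor ${\mathbb P}(R)^i/{\mathbb P}(R)^{i+1}$ is annihilated by ${\mathbb P}(R)$, hence is a right module over $\bar R:=R/{\mathbb P}(R)$, and it is finitely generated because $R$ is right Noetherian. So it will suffice to prove that $\bar R$ is a right Artinian ring: then, by Hopkins--Levitzki, each of those factors has finite length, hence is an Artinian right $R$-module, so $R_R$ admits a finite filtration with Artinian factors and is therefore Artinian.

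\emph{The semiprime case.} I would prove, by induction on the number of minimal primes, that a semiprime right Noetherian ring all of whose prime factors are Artinian is right Artinian; this applies to $\bar R$, whose prime factors coincide with those of $R$ (every prime ideal contains ${\mathbb P}(R)$) and hence are Artinian --- and a prime Artinian ring is simple Artinian. If there is only one minimal prime, the ring is prime, so it equals its quotient by $(0)$ and is itself simple Artinian. If there are $k\geq 2$ minimal primes $\bar P_1,\dots,\bar P_k$ (so $\bar P_1\cap\cdots\cap\bar P_k=0$), put $\bar A=\bar P_1\cap\cdots\cap\bar P_{k-1}$; since distinct minimal primes are incomparable, $\bar A\cap\bar P_k=0$, so $\bar A$ embeds as a two-sided ideal of the right Artinian ring $\bar R/\bar P_k$ and is thus an Artinian right module, while $\bar R/\bar A$ is again semiprime and right Noetherian, has exactly $k-1$ minimal primes, and has all of its prime factors simple Artinian; by the induction hypothesis $\bar R/\bar A$ is right Artinian. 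Hence $\bar R$ is an extension of an Artinian module by an Artinian module, so $\bar R$ is right Artinian, and combining this with the reduction step completes the argument.

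I expect the inductive step to be the main obstacle, and it is precisely there that the hypothesis must be used essentially: merely embedding $\bar R$ into the semisimple Artinian ring $\prod_j R/P_j$ is of no help, since subrings of semisimple Artinian rings need not be Artinian (for instance ${\mathbb Z}\subset{\mathbb Q}$). What makes the argument go through is that each $R/P_j$ is \emph{simple} Artinian, so that any finitely generated right ideal of it --- in particular the image of $\bar A$ in $\bar R/\bar P_k$ --- automatically has finite length. The remaining inputs (finiteness of the set of minimal primes and nilpotency of the prime radical in a right Noetherian ring, Hopkins--Levitzki, and stability of Artinian modules under extensions) are routine.
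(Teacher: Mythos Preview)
The paper does not supply a proof of this lemma; it is quoted verbatim from Faith's book \cite[Lemma 18.34B]{Faith2}, so there is no ``paper's proof'' to compare against. Your argument is correct and is essentially the standard one: nilpotence of the prime radical in a right Noetherian ring reduces to the semiprime case, and there the finite intersection of minimal primes gives an inductive filtration whose factors are Artinian because each $R/P_j$ is (simple) Artinian. Your opening observation is also sharp: the dichotomy in the statement is illusory, since if $R$ is prime then $(0)$ is a prime ideal and the hypothesis already forces $R=R/(0)$ to be Artinian. One minor wording point: in the inductive step you say $\bar A$ ``embeds as a two-sided ideal of $\bar R/\bar P_k$''; more precisely, the quotient map $\bar R\to\bar R/\bar P_k$ restricts to an injection on $\bar A$ (because $\bar A\cap\bar P_k=0$), so $\bar A$ is isomorphic as a right $\bar R$-module to a submodule of the Artinian module $\bar R/\bar P_k$---that is all you need.
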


\begin{proposition} \label{PiPi1}
A ring $R$ is locally finite right Noetherian  if and only if it
is  a locally finite right Artinian.
\end{proposition}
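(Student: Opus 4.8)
The plan is to treat the two implications separately, the trivial one being ``locally finite right Artinian $\Rightarrow$ locally finite right Noetherian'': a right Artinian ring is right Noetherian by the Hopkins--Levitzki theorem, local finiteness being untouched, so nothing more is needed there.

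For the substantial direction I would assume $R$ locally finite and right Noetherian and first isolate and prove the following claim: \emph{every prime, right Noetherian, locally finite ring $S$ is simple Artinian.} This claim is where the hypotheses actually interact, and proving it is the main point. Local finiteness forces $S$ to be periodic --- already the multiplicative semigroup generated by a single element must be finite --- so each $s\in S$ satisfies $s^m=s^n$ for some $m>n\geq 1$. Feeding a regular element $s$ (a non-zero-divisor) into this, one has $s^n(s^{m-n}-1)=0$ with $s^n$ again regular, whence $s^{m-n}=1$ and $s\in U(S)$; thus \emph{every} regular element of $S$ is already a unit. On the other hand $S$, being prime and right Noetherian, is prime right Goldie, so Goldie's theorem provides a simple Artinian right quotient ring of $S$; but that quotient ring is manufactured by inverting the regular elements of $S$, which are units, so it equals $S$ itself. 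Hence $S$ is simple Artinian.

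The second step is a routine descent via Lemma \ref{SS24}. For each prime ideal $P$ of $R$, the quotient $R/P$ is a homomorphic image of a locally finite ring, hence locally finite, and it is prime and right Noetherian, so by the claim $R/P$ is Artinian. Lemma \ref{SS24} then leaves only two possibilities: $R$ is right Artinian (done), or $R$ is prime --- but in the latter case $R$ itself satisfies the hypotheses of the claim and so is right Artinian anyway. Either way $R$ is right Artinian, which finishes the proof.

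I expect the only real obstacle to be the claim, and inside it the implication ``periodic $\Rightarrow$ every regular element is a unit''; once that is available, Goldie's theorem collapses the right quotient ring onto $R$ and the classical structure theory finishes the job. It is also worth noting that Goldie's theorem, Lemma \ref{SS24} and Hopkins--Levitzki are all statements about rings with identity, so the argument is carried out in the unital setting of this section.
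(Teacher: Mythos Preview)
Your argument is correct and follows the same global outline as the paper's proof: establish that $R/P$ is Artinian for every prime ideal $P$, then invoke Lemma~\ref{SS24} (and absorb the residual prime case into the same claim). Where you diverge is in how the claim is proved. The paper simply cites Lemma~\ref{LF2}$(iii)$ to conclude that $R/P$ is a \emph{field}; that lemma passes through the commutativity of $2$-torsion-free semiprime periodic rings (Lemma~\ref{LF2}$(ii)$), so it carries a hidden characteristic restriction. You instead observe that periodicity forces every regular element to be a unit, and then let Goldie's theorem collapse the classical right quotient ring onto $R/P$, yielding simple Artinian directly. Your route is more robust: it needs no characteristic hypothesis and never passes through commutativity --- indeed $M_2(\mathbb{F}_2)$ is a prime locally finite ring that is \emph{not} a field, so ``$R/P$ is a field'' genuinely requires the $2$-torsion-free assumption while your ``$R/P$ is simple Artinian'' does not. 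The price is that you import Goldie's theorem, which is heavier than the periodic-ring lemmas the paper already has available.
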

\begin{proof}
$(\Leftarrow )$ Each right Artinian ring is right
Noetherian by \cite[Theorem 18.13]{Faith2}.

$(\Rightarrow )$ Since $R/P$ is a field for each prime ideal $P$
of $R$ (see Lemma \ref{LF2}$(iii)$), we deduce that $R$ is right Artinian in
view of  Lemmas \ref{SS24} and  \ref{LF2}$(iii)$.
\end{proof}




\begin{proposition} \label{FSF}
Let  $R$ be  a  semilocal ring. The following conditions are equivalent:
\begin{itemize}
\item[$(i)$] $R$ is a locally finite ring;
\item[$(ii)$] the unit group $U(R)$ is locally finite;
\item[$(iii)$] $R^+$ is a torsion group, $J(R)$ is a locally nilpotent ideal and $R/J(R)={\sum\limits_{i=1}^n}^\oplus M_{m_i}(D_i)$ is a finite  direct sum of  rings of $m_i\times m_i$ matrices over locally finite fields $D_i$ with   $i=1,\ldots, n$.
\end{itemize}
\end{proposition}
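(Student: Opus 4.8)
The plan is to establish the cycle $(i)\Rightarrow(ii)\Rightarrow(iii)\Rightarrow(i)$. Throughout I use that, $R$ being semilocal, $R/J(R)$ is semisimple Artinian, so by the Wedderburn--Artin theorem $R/J(R)\cong{\sum\limits_{i=1}^n}^\oplus M_{m_i}(D_i)$ for some division rings $D_1,\dots,D_n$; the content of $(iii)$ is then precisely that each $D_i$ is a locally finite field, that $R^+$ is torsion, and that $J(R)$ is locally nilpotent. The implication $(i)\Rightarrow(ii)$ is Lemma \ref{LF1}$(ii)$, so nothing is needed there.

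For $(ii)\Rightarrow(iii)$ I argue as follows. A locally finite group is torsion, so by Lemma \ref{krempa}$(i)$ the ring $J(R)$ is nil, $J(R)^+$ is torsion, and $U(R/J(R))=\prod_{i=1}^n\mathrm{GL}_{m_i}(D_i)$ is torsion. In particular each $D_i^*$, being isomorphic to a subgroup of $\mathrm{GL}_{m_i}(D_i)$, is torsion, so $D_i$ is a field by \cite[Theorem 8]{Herstein78}; since then every element of $D_i^*$ is a root of unity, necessarily $\charak D_i=p_i>0$ (otherwise $2$ would have infinite multiplicative order), $D_i$ is algebraic over its prime field, and any finitely generated subfield of $D_i$ lies in a finite field, i.e. $D_i$ is locally finite. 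Consequently $(R/J(R))^+$ is torsion, hence so is $R^+$. Finally $J(R)$ is a radical ring with $J(R)^\circ=1+J(R)$ a subgroup of the locally finite group $U(R)$, hence locally finite; thus $J(R)^\circ$ is locally nilpotent by \cite[Corollary 1]{Amberg_Dickenschied_Sysak}, and since $J(R)$ is nil, $J(R)$ is locally nilpotent by \cite[Lemma 3]{Amberg_Dickenschied_Sysak} (exactly as in the proof of Corollary \ref{PP}$(iii)$). This proves $(iii)$.

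For $(iii)\Rightarrow(i)$ I first note that both $J(R)$ and $R/J(R)$ are locally finite rings: a finitely generated subring of $J(R)$ is nilpotent and has finitely generated torsion additive group, hence is finite; and a finitely generated subring of $M_{m_i}(D_i)$ has all entries in a finite subfield of $D_i$, hence lies in a finite matrix ring, so each $M_{m_i}(D_i)$ --- and therefore, by closure under finite direct sums \cite[Proposition 2.1]{Huh_Kim_Lee04}, also $R/J(R)$ --- is locally finite. It then remains to deduce that $R$ is locally finite. Given a finite subset $X\subseteq R$, put $T={\langle X\rangle}_{\rm rg}$ and $T_0=T\cap J(R)$; the image of $T$ in $R/J(R)$ is finite, so $T_0$ has finite additive index in $T$, and (since $T$ is a finitely generated ring with finite quotient $T/T_0$) $T_0$ is generated as a two-sided ideal of $T$ by finitely many elements, all lying in $J(R)$.

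The main obstacle is to show that $T_0$ is finite, and I expect this to be where the real work lies. The plan here is to prove that $T_0$ is \emph{nilpotent}: in a long product of the chosen ideal generators of $T_0$, reduce modulo $T_0$ each element of $T$ occurring between two consecutive generators against a fixed finite transversal of $T/T_0$; the resulting ``main term'' is a product of elements of a fixed finitely generated --- hence nilpotent --- subring of $J(R)$ (the one generated by the ideal generators together with their products with the transversal elements), while every remaining term involves strictly more generator factors, and an induction then yields $T_0^{\,N}=0$ for a suitable $N$. Since $T_0^+$ is finitely generated and torsion, $T_0$ is finite, hence $T$ is finite, and $R$ is locally finite. (Equivalently, one may invoke closure of the class of locally finite rings under extensions, applied to $J(R)\trianglelefteq R$.) This closes the cycle.
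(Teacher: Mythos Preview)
Your cycle $(i)\Rightarrow(ii)\Rightarrow(iii)\Rightarrow(i)$ and your arguments for $(i)\Rightarrow(ii)$ and $(ii)\Rightarrow(iii)$ are correct and match the paper's approach (the paper folds most of your $(ii)\Rightarrow(iii)$ into its $(ii)\Rightarrow(i)$, but the content is the same: Lemma~\ref{krempa}$(i)$ for the structure of $J(R)$ and $R/J(R)$, Herstein for commutativity of the $D_i$, and \cite[Corollary~1 and Lemma~3]{Amberg_Dickenschied_Sysak} for local nilpotence of $J(R)$).

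The gap is in $(iii)\Rightarrow(i)$, at exactly the point you flagged as ``where the real work lies''. Two unjustified claims are doing all the work. First, you assert that $T_0=T\cap J(R)$ is finitely generated as a two-sided ideal of $T$, and later that $T_0^+$ is finitely generated; neither follows formally from ``$T$ is a finitely generated ring with finite quotient $T/T_0$''. Second, your sketched induction for nilpotence of $T_0$ does not terminate: writing each $t_k$ between consecutive generators as $r_k+s_k$ with $r_k$ in a transversal and $s_k\in T_0$, the ``main term'' lies in $S^N$ (where $S$ is the subring of $J(R)$ generated by the finite set $\{g_i\}\cup\{g_ir:r\in R_0\}$), and the remaining terms lie in $T_0^{N+1}$. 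For $N\geq M$ with $S^M=0$ this yields only $T_0^{N}\subseteq T_0^{N+1}$, i.e.\ $T_0^{M}=T_0^{M+1}=\cdots$, not $T_0^{M}=0$. Iterating merely shows $T_0^M=\bigcap_k T_0^k$, and there is no reason for this intersection to vanish. Your parenthetical alternative (closure of locally finite rings under extensions) is true, but it is precisely the statement you are trying to prove here, and its standard proof uses the same missing ingredient.

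That ingredient is Lewin's theorem \cite[Theorem~2]{Lewin}: a subring of finite additive index in a finitely generated ring is itself finitely generated as a ring. This is how the paper proceeds. Since $T={\langle X\rangle}_{\rm rg}$ is finitely generated and $T/T_0$ is finite, Lewin gives that $T_0$ is finitely generated \emph{as a ring}; because $T_0\subseteq J(R)$ and $J(R)$ is locally nilpotent, $T_0$ is then nilpotent. Now $T_0/T_0^{2}$ is a finitely generated torsion abelian group, hence finite; applying Lewin again to $T_0^{2}\subseteq T_0$ and inducting on the nilpotency index shows $T_0$ is finite, whence $T$ is finite. Once you invoke Lewin, your combinatorial reduction becomes unnecessary.
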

\begin{proof}
$(i) \Rightarrow (ii)$ It follows from Lemma \ref{LF1}$(ii)$.

$(ii) \Rightarrow (i)$ It is clear that $R/J(R)$ is a finite direct
ring sum and each  direct summand is a  locally finite field or a
finite total matrix ring. The unipotent group $1+J(R)$ is locally
nilpotent group.  That yields  the subring ${\langle J(R)^{\circ}\rangle}_{\rm rg}=J(R)$ is locally nilpotent by
\cite[Lemma 3]{Amberg_Dickenschied_Sysak}.

Let $X$ be a finite subset of $R$. There exists an additive
group isomorphism
$$
({\langle X\rangle}_{\rm rg}+J(R) )/J(R)\cong
{\langle X\rangle}_{\rm rg}/({\langle X\rangle}_{\rm
rg}\bigcap J(R))
$$ and
$$({\langle X\rangle}_{\rm rg}+J(R) )/J(R)={\langle
{X}+J(R)\rangle }_{\rm rg}
$$ is a finite subring of $R/J(R)$. The subring $B:={\langle X\rangle}_{\rm
rg} \cap J(R)$ is finitely generated by \cite[Theorem 2]{Lewin}. Clearly,  it is
nilpotent and so $B/B^2$ is a finitely generated  ${\mathbb
Z}_n$-module for some $n\in\mathbb{N}$.  It implies that  the  subring $B^2$ is
finitely generated by \cite[Theorem 2]{Lewin}. Using induction on the
nilpotency index of $B$, we obtain that $B$ (and consequently
${\langle X \rangle }_{\rm rg}$) is finite.

$(ii) \Rightarrow (iii)$ For each $D_i$   $(i=1,\ldots, n)$  there exists a chain $F_1\subseteq F_2\subseteq \cdots$ such that $D_i=\cup_j F_j$ and  each $F_j$ is a finite subfield of the field $F_{j+1}$. Thus  $(R/J(R))^\circ$ is locally finite. Moreover,  the adjoint group $J(R)^\circ$ is locally finite, $(R/J(R))^\circ \cong R^\circ /J(R)^\circ$ and so $R^\circ$ is locally finite.

$(iii) \Rightarrow (ii)$ It is obvious.
\end{proof}

\begin{corollary}
A locally finite semilocal ring is semiperfect.
\end{corollary}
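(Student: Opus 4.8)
The plan is to reduce the statement to the classical fact that a (unital) semilocal ring in which idempotents lift modulo the Jacobson radical is semiperfect, so the only real task is to verify the lifting of idempotents. First I would invoke Proposition \ref{FSF}: since $R$ is a locally finite semilocal ring, part $(iii)$ gives that $J(R)$ is a locally nilpotent ideal and $R/J(R)={\sum_{i=1}^n}^{\oplus}M_{m_i}(D_i)$ is a finite direct sum of matrix rings over locally finite fields $D_i$; in particular $J(R/J(R))=0$, so $R/J(R)$ is semisimple Artinian, which records the ``semilocal'' half of semiperfectness.

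Next I would observe that a locally nilpotent ring is nil: for every $x\in J(R)$ the subring ${\langle x\rangle}_{\rm rg}$ is nilpotent, hence $x^k=0$ for some $k\in\mathbb{N}$. Thus $J(R)$ is a nil two-sided ideal of $R$.

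Then I would apply the classical theorem that idempotents lift modulo a nil ideal: given $\bar{e}=\bar{e}^{\,2}$ in $R/J(R)$, choose a preimage $a\in R$; since $a^2-a\in J(R)$ is nilpotent, a suitable polynomial in $a$ (equivalently, the limit of the sequence $a\mapsto 3a^2-2a^3\mapsto\cdots$, which stabilizes because $J(R)$ is nil) produces an idempotent $e\in R$ with $e-a\in J(R)$ and $\bar e$ as prescribed. Together with the semisimplicity of $R/J(R)$ obtained above, this is precisely the definition of $R$ being semiperfect, which completes the proof.

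The argument is essentially bookkeeping once Proposition \ref{FSF} is available; the only points meriting a moment of care are the implication ``locally nilpotent $\Rightarrow$ nil'' and the tacit presence of an identity element (so that the term ``semiperfect'' carries its usual meaning) — the latter being unproblematic since $R$ is semilocal with $R/J(R)$ unital and $J(R)$ nil. I do not anticipate any genuine obstacle.
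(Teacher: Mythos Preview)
Your argument is correct and matches the paper's intent: the corollary appears without proof immediately after Proposition~\ref{FSF}, and the implicit reasoning is exactly what you spell out --- invoke Proposition~\ref{FSF}(iii) to get $J(R)$ locally nilpotent (hence nil), lift idempotents modulo the nil radical, and combine with the semisimplicity of $R/J(R)$ to conclude semiperfectness.

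One small caveat on your aside about unity: the claim that ``$R/J(R)$ unital and $J(R)$ nil'' already forces $R$ to have an identity is not valid. For instance, the finite ring $R=\left\{\left(\begin{smallmatrix}a&b\\0&0\end{smallmatrix}\right):a,b\in\mathbb{Z}_2\right\}$ is locally finite and semilocal with $J(R)$ nilpotent and $R/J(R)\cong\mathbb{Z}_2$, yet $R$ has no two-sided unit. The paper simply takes unity for granted here (as it does whenever it speaks of $U(R)$ or semiperfect rings), so this is a shared imprecision rather than a defect peculiar to your proof.
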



\section{Properties induced by derivations}


\begin{proposition} Let $R$ be a commutative ring with unity. If $R$
has a derivation $\delta$ with the finite kernel $\Ker \delta$,
then $R$ is a  locally finite ring.  The prime radical ${\mathbb P}(R)$ has
finite index in $R$ and $\delta  (R)\subseteq  {\mathbb P}(R)$.
\end{proposition}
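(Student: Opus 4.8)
The plan is to pin down the characteristic, bootstrap to local finiteness via a single power identity, then analyse the (finite) semisimple quotient, and finally push $\delta$ down to it.

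\emph{Finite characteristic and local finiteness.} From $\delta(1)=\delta(1\cdot1)=2\delta(1)$ we get $\delta(1)=0$, so the prime subring ${\mathbb Z}\cdot1$ lies in $\Ker\delta$; since $\Ker\delta$ is finite this makes $c:=\charak R$ a positive integer, so $R$ is a ${\mathbb Z}_{c}$-algebra and in particular the hypothesis can hold only in positive characteristic. The one identity driving everything is that, for \emph{every} $a\in R$,
\[
\delta(a^{c})=c\,a^{c-1}\delta(a)=0 ,
\]
because $c\cdot x=0$ for all $x\in R$; hence $a^{c}\in K:=\Ker\delta$ for every $a\in R$, i.e. the $c$-th power map carries $R$ into the finite ring $K$. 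Now let $X=\{a_{1},\dots,a_{k}\}\subseteq R$ be finite. The elements $a_{1}^{c},\dots,a_{k}^{c}$ lie in $K$ (a subring of $R$ containing $1$), so the subring ${\mathbb Z}_{c}[a_{1}^{c},\dots,a_{k}^{c}]$ is a subring of the finite ring $K$, hence finite; and ${\mathbb Z}_{c}[a_{1},\dots,a_{k}]$ is generated as a module over it by the finitely many monomials $a_{1}^{e_{1}}\cdots a_{k}^{e_{k}}$ with $0\le e_{i}<c$, hence is finite. Thus ${\langle X\rangle}_{\rm rg}$ is finite and $R$ is locally finite.

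\emph{The prime radical has finite index.} Every idempotent $e$ satisfies $\delta(e)=\delta(e^{2})=2e\delta(e)$, whence $e\delta(e)=0$ and then $\delta(e)=2e\delta(e)=0$; so all idempotents of $R$ lie in $K$ and $R$ has only finitely many. Writing the corresponding block decomposition $R=R_{1}\times\cdots\times R_{m}$ with each $R_{i}$ connected (its only idempotents are $0,1$), $\delta$ restricts to a derivation of each $R_{i}$ with finite kernel $K\cap R_{i}$, so we may assume $R$ connected. Then $R/{\mathbb P}(R)$ is a reduced, commutative, locally finite ring, hence a directed union of finite products of finite fields, hence von Neumann regular; being connected it is a field $E$, and being locally finite every nonzero element of $E$ is a root of unity (an absolute field), while ${\mathbb P}(R)=N(R)$ is the unique prime, hence the unique maximal, ideal of $R$ (so $R$ is local and $\charak R$ is a power of $p:=\charak E$). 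Since $K\cap{\mathbb P}(R)$ is the nilradical of the finite ring $K$, the finite ring $E_{0}:=K/(K\cap{\mathbb P}(R))$ embeds in $E$ as a finite subfield, and by the identity above $\alpha^{c}\in E_{0}$ for every $\alpha\in E$, so the $c$-th power map of $E$ has finite image. Writing $c=p^{s}m$ with $\gcd(m,p)=1$, the map $\alpha\mapsto\alpha^{p^{s}}$ is injective on $E$, so $(E^{*})^{m}$ is finite; since $E^{*}$ is locally cyclic and $E^{*}/(E^{*})^{m}$ has finite exponent dividing $m$, $E^{*}$ must be finite. Hence $E$ is finite and $[R:{\mathbb P}(R)]=|E|<\infty$; in general $[R:{\mathbb P}(R)]=\prod_{i}[R_{i}:{\mathbb P}(R_{i})]<\infty$.

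\emph{The inclusion $\delta(R)\subseteq{\mathbb P}(R)$, and the main obstacle.} This I would deduce from the single remaining fact that ${\mathbb P}(R)$ is $\delta$-invariant. Granting that, $\delta$ descends to a derivation $\bar\delta$ of $R/{\mathbb P}(R)$, a finite product of absolute fields; but an absolute field admits only the zero derivation (if $x^{n}=1$ with $\gcd(n,\charak)=1$, then $0=\delta(x^{n})=nx^{n-1}\delta(x)$ forces $\delta(x)=0$, since $n$ and $x$ are units), so $\bar\delta=0$, i.e. $\delta(R)\subseteq{\mathbb P}(R)$. (Concretely, in the connected case with $M={\mathbb P}(R)$ and $q=|E|$: for a unit $r$ one has $r^{q-1}=1+m$ with $m\in M$, so $(q-1)r^{q-2}\delta(r)=\delta(r^{q-1})=\delta(m)\in M$, and $(q-1)r^{q-2}$ being a unit gives $\delta(r)\in M$.) Thus the whole proof reduces to showing that $\delta(r)$ is nilpotent whenever $r$ is — and this is the step I expect to be the real obstacle. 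In characteristic $0$ it is automatic: differentiating $r^{n}=0$ repeatedly and cancelling the integer factors (which are then invertible) yields $\delta(r)^{2n-1}=0$. In positive characteristic those factors may be zero-divisors; the extra structure to bring to bear is that, by the reduction above, every element of ${\mathbb P}(R)$ has bounded nilpotency index (its $c$-th power lies in the nilpotent ideal $K\cap{\mathbb P}(R)$) and $R/{\mathbb P}(R)$ is already known to be finite. I would try to play these two facts against the integer coefficients appearing in the successive Leibniz expansions of $0=\delta(r^{n})$ so as to force enough of them to be units; making that work is the crux.
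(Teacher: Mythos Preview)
Your arguments for local finiteness and for the finite index of ${\mathbb P}(R)$ are correct, and in fact more elementary than the paper's. The paper first reduces to the case $\charak R=p^{s}$, passes to $R/pR$, uses Frobenius there ($\Delta(\beta^{p})=0$) to see that the $p$-th power map has finite image and hence $|R/pR:{\mathbb P}(R/pR)|<\infty$, lifts this to $|R:{\mathbb P}(R)|<\infty$, and then deduces local finiteness only at the very end by observing that $R$ is semiperfect with torsion unit group and invoking the semilocal criterion (Proposition~\ref{FSF}, which in turn rests on Lewin's theorem). Your single observation $\delta(a^{c})=ca^{c-1}\delta(a)=0$ gives $a^{c}\in K$ for all $a$, from which local finiteness is a two-line integrality argument and the finiteness of $R/{\mathbb P}(R)$ follows by bounding the exponent of $(R/{\mathbb P}(R))^{*}$; this bypasses the prime-component reduction and the machinery of Proposition~\ref{FSF} entirely.

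Where you hesitate, you are right to hesitate, but for a stronger reason than you suspect: the inclusion $\delta(R)\subseteq{\mathbb P}(R)$ is \emph{false} as stated, so the ``crux'' you isolate cannot be completed. Take $R=\mathbb{F}_{p}[x]/(x^{p})$ with $\delta=d/dx$. Then $\Ker\delta=\mathbb{F}_{p}$ is finite (for $1\le k\le p-1$ the coefficient $k$ is a unit, so $f'=0$ forces $f$ constant), $R$ is finite hence locally finite, and ${\mathbb P}(R)=(x)$ has index $p$; but $\delta(x)=1\notin(x)$. So nilpotents need not map to nilpotents, exactly the failure you anticipated. The paper's own proof, incidentally, never returns to this clause: it stops after establishing local finiteness, so the last assertion of the proposition is simply unproved there as well, and your instinct that this is ``the real obstacle'' is confirmed in the sharpest possible way.
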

\begin{proof}  Assume that $R$ is infinite. Obviously,  $\delta (1)=0$ and  $\Ker \delta \not=0$.
This implies that $nR=0$ for  some $n\in\mathbb{N}$ and  $R$ is a finite direct ring  sum of $p$-components $F_p$,
where the prime $p$ divides $n$ and
\[
F_p=\{ r\in R\mid p^kr=0\quad  \text{for some}\quad  k=k(r)\in\mathbb{N}\bigcup\{0\}\}.
\]
Consequently, without loss of generality,   we can assume that
$n=p^s$ for some  $s\in {\mathbb N}$. Since $$\varphi: R/pR\ni
a+pR\mapsto p^ka+p^{k+1}R\in p^kR/p^{k+1}R$$ is an additive group
isomorphism,  $p^kR/p^{k+1}R$ is infinite for any
$k=0,\ldots, s-1$.
If $\delta (R)\subseteq pR$, then $\delta (p^{s-1}R)=0$ and so
$p^{s-1}R\subseteq \Ker \delta$, a contradiction. Hence $\delta
(R)\nsubseteq pR$. Inasmuch as  $\delta (pR)\subseteq pR$, the rule
$$\Delta: R/pR\ni a+pR\mapsto \delta (a)+pR\in R/pR$$ determines a
nonzero derivation $\Delta$ of $R/pR$ and $\Ker \Delta $ is finite. Then
$\Delta (\beta^p)=0$ for any $\beta  \in R/pR$ and so the set $\{
\alpha^p\mid \alpha \in R/pR\}$ is finite. If $\alpha, \beta \in
R/pR$ are distinct elements and $\alpha^p-\beta^p=0$, then
$\alpha -\beta \in {\mathbb P}(R/pR)$. This implies that the index
$|R/pR:{\mathbb P}(R/pR)|<\infty$. However $pR\subseteq {\mathbb
P}(R)$ and so $|R: {\mathbb P}(R)|<\infty$.

Since ${\mathbb P}(R)$ is nil with the torsion additive group
${\mathbb P}(R)^+$, we conclude that the adjoint group ${\mathbb
P}(R)^{\circ}$ is locally finite. Thus $R$ is a semiperfect ring
with the torsion unit group $U(R)$, so  $R$ is locally finite by
Proposition \ref{FSF}. \end{proof}

\begin{proof}[Proof of Theorem $\ref{GGX}$.]
$(i)$ Assume that $R$ is commutative, $d\in \Delta$, $\delta \in Z(\Delta )$ and $a\in R$. Then
\begin{equation}
\label{A*}
0=[\delta, ad]=\delta (a)d+a[\delta, d]=\delta (a)d.
\end{equation}
The   ideal of $R$  generated by the set $\{ \mu (R)\mid \mu \in Z(\Delta )\}$ we denote by $\Delta_1(R)$. Then $\Delta_1(R)^2=0$ in view of $(\ref{A*})$. Since $d(\delta (R))=\delta (d(R))\subseteq \delta (R)$, we conclude that $\Delta_1(R)$ is a $\Delta$-ideal of $R$ and so \begin{equation} \label{1003*} \overline{d}:R/\Delta_1(R)\ni a+\Delta_1(R)\mapsto d(a)+\Delta_1(R)\in R/\Delta_1(R)
 \end{equation}
is a derivation of $B:=R/\Delta_1(R)$. Then $\overline{\Delta}=\{ \overline{d}\mid d\in \Delta \}$ is a subring of the Lie ring $\Der B$ and a left $B$-module. As before, $\overline{\Delta}_1(B)$ (and its inverse image in $R$) is  nilpotent, where $\overline{\Delta}_1:=\{ \overline{d}\mid d\in \Delta_2\}$ and $\Delta_2$ is an inverse image of $Z(\Delta /Z(\Delta ))$ in $\Delta$. Thus  $\Delta (R)$ is nilpotent according to the induction on nilpotent length of $\Delta$.

{   Now, assume that $R$ is not necessary commutative.} If $\delta \in \Delta$, then $\delta (C(R))\subseteq C(R)$ and the rule  $$ \overline{\delta}:R/C(R)\ni x+C(R)\mapsto \delta (x)+C(R)\in R/C(R)$$ determines a derivation of $R/C(R)$. Since $\overline{\Delta}=\{ \overline{\delta}\mid \delta \in \Delta \}$ is a left $(R/C(R))$-module, $\overline{\delta}(R/C(R))^m=\overline{0}$ for some $m\in\mathbb{N}$ and consequently $\Delta (R)^m\subseteq C(R)$.

$(ii)$ Suppose that $\gamma_{n+1}(\Delta )=0$ and $R$ is commutative. If $d \in \Delta^{(n-1)}$, $a\in R$, then
$$\begin{array}{rccl}d (a)d &= &[d, ad]& \in \gamma_2(\Delta ),\\
d^2 (a)d &= &[d, d(a)d]& \in \gamma_3(\Delta ),\\
{} & \vdots & {}& {}\\
d^{n-1} (a)d &= &[d,d^{n-2}(a)d]& \in \gamma_n(\Delta ),\\
d^{n} (a)d &= &[d,d^{n-1}(a)d]& \in \gamma_{n+1}(\Delta )=0.
 \end{array}$$
 This implies that $d^n(R)d=0$. Since $d(C(R))\subseteq C(R)$,
the result can be obtain  similar to that of the part $(i)$.

$(iii)$ Let $\delta \in \Delta$. If $R$ is commutative, then $a\delta \in \Delta$ for any $a\in R$ and so
\[
(-1)^n\delta^n(a)\delta=(-1)^n[a\delta, \underbrace{\delta,\ldots, \delta}_{n\quad \text{\tiny  times}}]=0
\] 
 for some $n\in\mathbb{N}$. Hence $\delta^n(a)\delta =0$.

{   Now, assume that $R$ is not necessary commutative.
Then  $\delta (C(R))\subseteq C(R)$  and, by the same argument as in the part $(i)$, there exist} $n=n(\delta, a)\in {\mathbb N}$ such that $\delta^n(a)\delta (R)\subseteq C(R)$ and the assertion holds.
\end{proof}

If $x\in R$, then the rule $\partial_x:R\ni a\mapsto (ax-xa)\in R$ determines a derivations $\partial_x$ of $R$; this derivation is called {\it an inner derivation} of $R$ ({\it induced by} $x$). The set $\IDer R$ of all inner derivations of $R$ is an ideal of the Lie ring $D$.

\begin{proposition}\label{PPI} Let $R$ be a  ring. The following statements hold:
\begin{itemize}
\item[$(i)$]  $R$ is Lie solvable (Lie nilpotent,  $n$-Engel,  Engel, locally Lie nilpotent, locally Lie solvable, respectively) if and only if the Lie ring $\IDer R$ is  solvable (nilpotent,  $n$-Engel, Engel, locally  nilpotent, locally  solvable, respectively);
\item[$(ii)$] if $D$ is  solvable (nilpotent,  $n$-Engel, Engel, locally  nilpotent, locally  solvable, respective\-ly), then $R$ is Lie solvable (Lie nilpotent,  $n$-Engel,  Engel, locally Lie nilpotent, locally Lie solvable, respectively);
\item[$(iii)$] if $R$ is $2$-torsion-free semiprime and  $D$ ($\IDer R$, respectively) is solvable, then $D=0$ ($R$ is commutative, respectively);
\item[$(iv)$] if $R$ is with unity of characteristic $0$ ($\charak (R/{\mathbb P}(R))=0$, respectively) and $D$  is solvable, then $C(R)\subseteq D(R)\subseteq {\mathbb P}(R)$;
\item[$(v)$] if $R$ consists from countable many elements and $D$ ($\IDer R$, respectively) is Lie solvable, then $D=0$ ($R$ is commutative, respectively);
 \item[$(vi)$] if $R$ is commutative and $D$ is locally nilpotent, then, for each $a\in R$ and  $\delta \in D$ there exists  $n=n(a,\delta)\in {\mathbb N}$ such that $\delta^n(a)\delta =0$. Moreover, if $R$ is reduced, then $\delta^n(a)=0$;
 \item[$(vii)$] if $R$ is a semiprime ring with the $n$-Engel derivation ring $D$ ($\IDer R$, respectively), then $R$ is commutative and, for each  $\delta \in D$ ($\delta \in \IDer R$, respectively), there exists  $n=n(\delta )\in\mathbb{N}$ such that $\delta^n(R)=0$. Moreover, if $F(R)=0$, then $D=0$.
\end{itemize}
\end{proposition}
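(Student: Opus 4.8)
The plan hinges on one observation: the map $x\mapsto-\partial_x$ is a surjective homomorphism of Lie rings $R^L\to\IDer R$ with kernel $Z(R)$ (the Jacobi identity gives $[\partial_x,\partial_y]=-\partial_{[x,y]}$), so $\IDer R\cong R^L/Z(R)$, with $Z(R)$ a central ideal of $R^L$. For $(i)$ I would transport each of the six conditions across this isomorphism. The forward implications are immediate since these classes pass to quotients; for the converse, if $\gamma_m(R^L)$ (respectively $\delta_m(R^L)$, or $[x,{}_m y]$ for all $x,y$) lands in $Z(R)$, then one more bracket---with $R^L$, or, in the solvable case, inside the abelian group $Z(R)$---annihilates it, so $R^L$ has the corresponding property, with the nilpotency class / derived length / Engel degree raised by at most one (in particular, ``$n$-Engel'' for $\IDer R$ gives ``$(n{+}1)$-Engel'' for $R$, the one place where the numerical parameter shifts). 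For the ``locally'' variants one runs the same argument on the restriction to a finitely generated subring $A=\langle x_1,\dots,x_k\rangle_{\rm rg}$, using $A\cap Z(R)\subseteq Z(A)$. Part $(ii)$ is then formal: $\IDer R$ is an ideal of $D$, all six classes are inherited by Lie subrings, and $(i)$ finishes.

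For $(iii)$ I would first reduce to the commutative case. If $\IDer R$ is solvable then $R$ is Lie solvable by $(i)$; since $R$ is $2$-torsion-free, Theorem~\ref{ZAZ}(iii)(a) gives $C(R)\subseteq{\mathbb P}(R)=N(R)$, and semiprimeness forces ${\mathbb P}(R)=0$, hence $C(R)=0$---so $R$ is commutative and reduced, which already settles the $\IDer R$ clause. For the $D$ clause, suppose $D\neq0$; as $D$ is solvable, the last nonzero term $A:=D^{(k-1)}$ of its derived series is a nonzero abelian ideal of $D$. Fix $0\neq\delta\in A$. Since $R$ is commutative, $a\delta\in D$ for each $a\in R$, and $[a\delta,\delta]=-\delta(a)\delta\in[D,A]\subseteq A$, so $\delta(a)\delta\in A$; as $A$ is abelian, $0=[\delta,\delta(a)\delta]=\delta^2(a)\delta$. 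Evaluating the derivation $\delta^2(a)\delta$ at $\delta(a)$ gives $\delta^2(a)^2=0$, so $\delta^2(a)=0$ for all $a$ ($R$ reduced), i.e.\ $\delta^2=0$; then $0=\delta^2(ab)=2\delta(a)\delta(b)$ together with $2$-torsion-freeness gives $\delta(a)^2=0$, so $\delta=0$, a contradiction. Hence $D=0$. Part $(v)$ follows the same template, with the cardinality hypothesis playing the role that ``$2$-torsion-free semiprime'' plays in $(iii)$.

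Part $(iv)$: $D(R)$ is an ideal of $R$ containing each $\partial_x(R)=[R,x]$, hence contains $C(R)$, giving $C(R)\subseteq D(R)$ for free. By $(ii)$, $R$ is Lie solvable, so Theorem~\ref{ZAZ}(iii)(a) applied to the semiprime ring $R/{\mathbb P}(R)$ (here the characteristic hypothesis is what guarantees it is $2$-torsion-free) shows $R/{\mathbb P}(R)$ is commutative, i.e.\ $C(R)\subseteq{\mathbb P}(R)$. Then $R/C(R)$ is commutative with nilradical ${\mathbb P}(R)/C(R)$; every $\delta\in D$ preserves $C(R)$ and, by the characteristic hypothesis, preserves this nilradical, hence preserves ${\mathbb P}(R)$ and induces a derivation $\bar\delta$ of $\bar R:=R/{\mathbb P}(R)$. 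The collection $\overline D=\{\bar\delta:\delta\in D\}$ is a homomorphic image of $D$, hence solvable, and $\bar R$ is $2$-torsion-free semiprime, so $(iii)$ gives $\overline D=0$; thus $\delta(R)\subseteq{\mathbb P}(R)$ for all $\delta$, whence $D(R)\subseteq{\mathbb P}(R)$.

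Parts $(vi)$ and $(vii)$ rest on the identity $[a\delta,{}_m\delta]=(-1)^m\delta^m(a)\delta$, valid whenever $R$ is commutative (so that $a\delta\in D$). For $(vi)$, a locally nilpotent $D$ is Engel with $RD\subseteq D$ modulo $C(R)=0$, so Theorem~\ref{GGX}(iii) (equivalently, nilpotency of the finitely generated Lie ring $\langle\delta,a\delta\rangle$) gives $\delta^n(a)\delta=0$; if $R$ is reduced, evaluating at $\delta^{n-1}(a)$ yields $\delta^n(a)^2=0$, so $\delta^n(a)=0$. For $(vii)$, $\IDer R\subseteq D$ is $n$-Engel, so $R$ is bounded Engel by $(i)$, and Corollary~\ref{PP}(i) with semiprimeness gives $C(R)=0$, so $R$ is commutative and reduced; then $[a\delta,{}_n\delta]=0$ forces $\delta^n(R)=0$ exactly as before. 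If moreover $F(R)=0$, take $m$ minimal with $\delta^m=0$: if $m\ge2$, pick $a$ with $\delta^{m-1}(a)\neq0$, and in the Leibniz expansion of $0=\delta^{2m-2}(a^2)$ only the middle term survives, so $\binom{2m-2}{m-1}\delta^{m-1}(a)^2=0$, forcing $\delta^{m-1}(a)=0$---a contradiction---hence $\delta=0$ and $D=0$. The hard part will be the $D$-clause of $(iii)$: locating $\delta$ inside the terminal abelian ideal $D^{(k-1)}$ and driving solvability down to $\delta^2=0$ and then $\delta=0$; secondary nuisances are the differential stability of ${\mathbb P}(R)$ needed in $(iv)$ and the class-shift bookkeeping in $(i)$.
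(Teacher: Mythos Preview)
Most of your plan is sound and in several places cleaner than the paper's own argument. For $(i)$ and $(ii)$ your route via the Lie isomorphism $\IDer R\cong R^L/Z(R)$ is exactly what the paper does; your remark that the $n$-Engel parameter may shift by one is a genuine refinement the paper glosses over. For $(iii)$ you reach commutativity of $R$ through Theorem~\ref{ZAZ}(iii)(a) rather than by invoking Herstein's Lie-ideal lemma directly on the derived series of $\IDer R$; both work. Your endgame for the $D$-clause --- locating $\delta$ in the last nonzero derived term $A=D^{(k-1)}$, using $[D,A]\subseteq A$ and abelianness of $A$ to force $\delta^2(a)\delta=0$, then $\delta^2=0$, then $\delta=0$ --- is a self-contained alternative to the paper's appeal to Theorem~\ref{GGX}(i), and arguably more transparent, since Theorem~\ref{GGX}(i) is stated for \emph{nilpotent} $\Delta$ while here $D$ is only solvable. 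Your treatments of $(vi)$ and $(vii)$ match the paper's; your Leibniz expansion of $\delta^{2m-2}(a^2)$ is a pleasant variant of the paper's $\delta^{n}(a\,\delta^{n-2}(b))$ trick. In $(iv)$ the paper obtains $\delta$-stability of minimal primes from Goodearl--Warfield and then applies $(iii)$ to each $R/P$; your detour through $R/C(R)$ and stability of its nilradical is workable but needs exactly the external input you flag as a ``secondary nuisance''.

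The real gap is $(v)$. Saying it ``follows the same template, with the cardinality hypothesis playing the role that $2$-torsion-free semiprime plays in $(iii)$'' is not a proof: nothing in your $(iii)$ argument uses or could use countability, and countability alone does not give you semiprimeness or reducedness, which your template for $(iii)$ consumes at every step. The paper's mechanism is entirely different. It invokes a theorem of Chuang--Lee which, for a countable ring, produces a family of prime ideals $\{P_\beta\}$ with $\bigcap_\beta P_\beta=0$ such that every $P_\beta$ is stable under \emph{every} derivation of $R$; one then passes to each prime quotient $R/P_\beta$, applies $(iii)$ there to annihilate the induced derivations, and concludes $D(R)\subseteq\bigcap_\beta P_\beta=0$. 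Countability is precisely what makes the Chuang--Lee construction of simultaneously $D$-invariant primes go through; without naming that input there is no template to follow. You should state this reduction explicitly and, as in $(iv)$, check that the prime quotients $R/P_\beta$ meet the hypotheses of $(iii)$.
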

\begin{proof} $(i)$ Since $[\partial_{x_1},\partial_{x_2},\cdots, \partial_{x_n}]=\partial_{[x_1,x_2,\cdots, x_n]}$ for any $x_1,x_2,\ldots, x_n\in R$ and $$\IDer \ni \partial_x\mapsto x+Z(R)\in R^L/Z(R)$$ is a Lie ring isomorphism, the result is obvious.

$(ii)$ It is immediately.

$(iii)$ {  If $A\subseteq R$, then by $\Delta_A$ we denote the set $\{ \partial_a\mid a\in A\}$. If $A$ is a Lie ideal of $R$, then $\Delta_A$ is an ideal of $\IDer R$.
} Assume that $D$ is solvable of length $n>1$. If $D^{(n-1)}\cap \IDer R\neq 0$, then
\[
I_{D^{(n-1)}}:=\{ a\in R\mid  \partial_a\in D^{(n-1)}\bigcap \IDer R\}
\]
is a Lie ideal of $R$ and
\[
0=[D^{(n-1)},D^{(n-1)}]\supseteq [\Delta_{I_{D^{(n-1)}}}, \Delta_{I_{D^{(n-1)}}}]=\Delta_{[I_{D^{(n-1)}},I_{D^{(n-1)}}]}
\]
what gives that
$[I_{D^{(n-1)}},I_{D^{(n-1)}}]\subseteq Z(R)$. Thus    $I_{D^{(n-1)}}\subseteq Z(R)$ by \cite[Lemma 1]{Herstein70} and consequently $\Delta_{I_{D^{(n-1)}}}=0$, a contradiction. Hence $D^{(n-1)}\cap \IDer R=0$. This implies that $R$ is commutative,  $D(R)^m=0$ for some $m\in\mathbb{N}$ by Theorem \ref{GGX}$(i)$ and  $D(R)=0$ by the semiprimeness of $R$. We conclude the result.

$(iv)$ Each  minimal prime ideal of $R$ is closed with respect to each $\delta \in D$  by \cite[Proposition 1.3]{Goodearl_Warfield}. The  map
\begin{equation} \label{PPI1}
 \overline{\delta}:R/P\ni a+P\mapsto \delta (a)+P\in R/P
\end{equation}
is a derivation of a prime ring $R/P$ of characteristic ${}\neq 2$ by Lemma \ref{SYS} and $\Der (R/P)=0$ by the part $(iii)$. Thus $C(R)\subseteq D(R)\subseteq {\mathbb P}(R)$ ($C(R)\subseteq {\mathbb P}(R)$, respectively).

$(v)$ There exists a collection of prime ideals $P_{\beta}$ $(\beta \in \Gamma )$ by \cite[Theorem 2.1]{Chuang_Lee} such that
\[
\bigcap_{\beta \in \Gamma}P_{\beta}=0 \quad \text{and}\quad \delta (P_{\beta})\subseteq P_{\beta}\qquad (\forall \delta \in D \text{ and/or } \forall \delta \in \IDer R, \text{  respectively}).
\]
Consequently,   $\overline{\delta}$ defined by the rule $(\ref{PPI1})$ is a derivation of $R/P$ for any $P=P_\beta$ and  $\Der (R/P)=0$  by the part $(iii)$. Hence
\[
C(R)\subseteq D(R)\subseteq \bigcap_{\beta \in \Gamma}P_{\beta}=0
\]
 and the assertion holds.

 $(vi)$ The subring  of $D$ generated by derivations $d$ and $ad$\ $(a\in R\ \text{and}\ d\in D)$ is nilpotent and the result holds by the same argument as in the proof of Theorem $\ref{GGX}(ii)$.

$(vii)$ Since $R$ is $n$-Engel it is commutative by Corollary \ref{PP}. If $\delta \in D$ ($\delta \in \IDer R$, respectively), then by the same argument, as in the proof of Theorem \ref{GGX}$(ii)$, we obtain that $\delta^n(R)=0$ for some $n\in\mathbb{N}$.

Let $F(R)=0$.   We prove that $\delta =0$ using induction by $n$. If $\delta^2(R)=0$, then $\delta =0$ by \cite[Corollary 1]{Carini}. Let $n>2$ and suppose that $\delta^{n-1}(R)=0$ implies that $\delta =0$. Assuming $\delta^n(R)=0$ we see that
\[
0=\delta^{n}(a\delta^{n-2}(b))=(n-1)\delta^{n-1}(a)\delta^{n-1}(b)\qquad (\forall a,b\in R)
\]
what implies that $(\delta^{n-1}(R))^2=0$ and hence $\delta^{n-1}(R)=0$  by the semiprimeness of $R$. Thus  $\delta =0$.
\end{proof}

Recall that  the commutator ideal $C(R)$ of a  $2$-torsion-free  Lie solvable ring $R$ is nil (see \cite[Theorem 2.1]{Sharma_Srivastava85} and \cite[Theorem]{Zalesskii_Smirnov_82}). Proposition \ref{PPI}$(v)$ precise this result in the countable case.

\begin{corollary}\label{hoho}
Let $R$ be an algebra  over a field ${\mathbb F}$ of characteristic $0$. If the  Lie ${\mathbb F}$-algebra  $D$ ($\IDer R$, respectively) is Engel, then $R$ is Lie nilpotent.
\end{corollary}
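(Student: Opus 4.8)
The plan is to leverage Proposition \ref{PPI} together with Zelmanov's theorem on Engel Lie algebras over fields of characteristic zero, reducing the Engel hypothesis on $D$ (or $\IDer R$) to Lie nilpotency of $R$. First I would recall from Proposition \ref{PPI}$(ii)$ that if $D$ is Engel as a Lie ring, then $R$ is Engel (and similarly, by part $(i)$, if $\IDer R$ is Engel then $R$ is Engel, since $\IDer R \cong R^L/Z(R)$ as Lie rings). So in either case $R$ is an Engel $\mathbb F$-algebra. The point of assuming $\mathbb F$ has characteristic $0$ and that $D$ (or $\IDer R$) is an Engel Lie \emph{algebra} is that we can upgrade "Engel" to "nilpotent": by Zelmanov's theorem, an Engel Lie algebra over a field of characteristic zero is locally nilpotent, and if it is finitely generated (or, more to the point here, satisfies a bounded Engel condition) it is nilpotent.

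The key subtlety is that the Engel condition on $D$ is a priori unbounded (for each pair $\delta_1,\delta_2$ there is an $n$ with $[\delta_1,{}_n\delta_2]=0$), so one cannot directly invoke the nilpotency half of Zelmanov's theorem; instead I would argue locally. Given $x,y \in R$, consider the inner derivations $\partial_x,\partial_y \in \IDer R \subseteq D$. The Lie subalgebra $L$ of $D$ generated by $\partial_x$ and $\partial_y$ is a finitely generated Engel Lie algebra over $\mathbb F$, hence nilpotent by Zelmanov \cite{Zelmanov89, Zelmanov90}; say $\gamma_{m+1}(L) = 0$ for some $m = m(x,y)$. In particular $[\partial_x, {}_m \partial_y] = 0$, i.e., $\partial_{[x,{}_m y]} = 0$, so $[x,{}_m y] \in Z(R)$. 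This shows $R^L/Z(R)$ is Engel, but I want to conclude $R$ itself is Lie nilpotent, which requires a uniform bound and passing from the quotient back to $R$.

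To get Lie nilpotency of $R$ I would combine the above with the structure results already proved. By Corollary \ref{19}$(i)$, an Engel ring $R$ satisfies $C(R) \subseteq N_r(R) = N(R) \subseteq J(R)$ and $R^\circ$ is Engel-by-abelian; moreover since $R$ is a $PI$-type situation over a field of characteristic zero we are in the setting where the relevant Lie algebra arguments are available. The cleanest route: observe that $R$ being an Engel algebra over a field of characteristic zero forces, via \cite{Kemer} and \cite{Riley_Wilson} (cited in the excerpt for unitary bounded Engel algebras) together with Zelmanov's nilpotency theorem applied to $R^L$, that $R$ is in fact Lie nilpotent --- the characteristic-zero hypothesis is exactly what promotes the (locally nilpotent, unbounded-Engel) conclusion to a genuine nilpotency statement for $R^L$. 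I expect the main obstacle to be precisely this last step: showing that the \emph{unbounded} Engel condition on $R^L$, in characteristic zero, yields global Lie nilpotency rather than merely local Lie nilpotency. This is where one must invoke Zelmanov's solution of the restricted Burnside-type problem for Lie algebras (an Engel Lie algebra over a field of characteristic zero satisfying suitable finiteness is nilpotent) in the sharp form, and carefully track that the derivation-ring hypothesis gives enough uniformity; the reduction from $D$ Engel to $R$ Engel (Proposition \ref{PPI}$(ii)$) and the identification $\IDer R \cong R^L/Z(R)$ are the routine parts.
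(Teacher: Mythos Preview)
Your reduction is exactly the paper's: via Proposition~\ref{PPI} (equivalently, via the Lie isomorphism $\IDer R \cong R^{L}/Z(R)$), the Engel hypothesis on $D$ or on $\IDer R$ forces $R$ itself to be Engel as a Lie ring. The paper then finishes in one line by invoking \cite[Theorem~B]{Riley98}, which gives that an Engel associative algebra over a field of characteristic~$0$ is Lie nilpotent. That single citation is precisely the ingredient missing from your proposal.

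Your proposed substitutes do not close the gap. Zel'manov's theorem for Lie algebras in characteristic~$0$ yields only \emph{local} nilpotency of $R^{L}$ from an unbounded Engel condition (global nilpotency requires a bound), and the Kemer/Riley--Wilson results you cite likewise need bounded Engel. Your hope that ``the derivation-ring hypothesis gives enough uniformity'' is misplaced: once you have deduced that $R$ is Engel, nothing further is squeezed out of $D$ --- the paper's argument proceeds entirely inside the associative algebra $R$, and the passage from unbounded Engel to Lie nilpotent in characteristic~$0$ is a genuinely \emph{associative}-algebra fact (Riley's Theorem~B), not available for abstract Lie algebras. So your outline is correct and matches the paper's route, but the last step is left open; replace the Zel'manov/Kemer detour by a direct appeal to \cite[Theorem~B]{Riley98} and the argument is complete.
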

\begin{proof} Since $\IDer R$ is an Engel Lie algebra over $Z(R)$ and $\IDer R$ and $R/Z(R)$ are isomorphic as Lie algebras, $R$ is Lie Engel and so  it is nilpotent by \cite[Theorem B]{Riley98}.
\end{proof}

\section{Solvability}


\begin{lemma} \label{LSLN}
Each  Lie solvable nil ring $R$ is locally nilpotent. Moreover, if $R$ is a ${\mathbb Q}$-algebra, then it is Lie nilpotent.
\end{lemma}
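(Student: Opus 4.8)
The plan is to split the statement into its two assertions and reduce each to a classical theorem, the crucial observation being that Lie solvability is itself a polynomial identity.

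For local nilpotence, first note that a Lie solvable ring is automatically a PI-ring: since $[-,-]$ is biadditive, an easy induction shows that $\delta_k R$ is the additive span of all $2^k$-fold nested double commutators, so the hypothesis $\delta_n R=0$ is precisely the multilinear polynomial identity
\[
p_n(x_1,\dots,x_{2^n}) := [\dots[[x_1,x_2],[x_3,x_4]]\dots]\equiv 0
\]
of degree $2^n$, which is a proper identity. Hence $R$ is a nil PI-ring. Since local nilpotence is a local property, it suffices to show that each finitely generated subring $S$ of $R$ is nilpotent; but $S$ is again a finitely generated nil PI-ring, and such a ring is nilpotent — equivalently, nil PI-rings are locally nilpotent — by a classical theorem (e.g.\ Braun's theorem that the Jacobson radical of a finitely generated PI-ring is nilpotent, together with the fact that a nil ring coincides with its own Jacobson radical; or the older Kaplansky–Levitzki results). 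Thus $R$ is locally nilpotent. (Alternatively, one could pass to the $2$-torsion-free quotient $R/F$, where $F$ is the $2$-primary component of the torsion ideal $F(R)$ — itself an ideal of $R$ — apply Theorem~\ref{ZAZ}$(iii)(a)$ to $R/F$, and treat $F$ separately; but that last step still needs the same nil-PI input, so the direct route is cleaner.)

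For the $\mathbb{Q}$-algebra case, observe that $R$, being nil, is Engel (\cite[Proposition~4.2]{Sysak10}); hence the associated Lie ring $R^L$ is Engel, and so is the Lie $\mathbb{Q}$-algebra $\IDer R\cong R^L/Z(R)$. Moreover $R$ is a PI-algebra by the first part. Corollary~\ref{hoho} now applies and gives that $R$ is Lie nilpotent.

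The hard part is concentrated in the two imported theorems; producing the polynomial identity from Lie solvability, extracting the Engel condition from nilness, and reducing to finitely generated subrings are all routine. The real content is (i) \emph{finitely generated nil PI-rings are nilpotent}, which yields the first assertion, and (ii) the characteristic-zero fact that an \emph{Engel PI-algebra over a field of characteristic $0$ is Lie nilpotent}, which yields the second. The characteristic hypothesis in (ii) is essential: there are nil — hence Engel — algebras over fields of positive characteristic that are not Lie nilpotent (\cite{Smoktunowicz}), so the ``moreover'' clause cannot hold without the $\mathbb{Q}$-algebra assumption.
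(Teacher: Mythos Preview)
Your proof is correct, but it follows a somewhat different path from the paper's.

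For the first assertion, the paper does not pass directly from Lie solvability to a polynomial identity. Instead it invokes the structure theorem of Sharma--Srivastava and Zalesski\u{\i}--Smirnov: a Lie solvable ring contains a nilpotent ideal $I$ with $R/I$ satisfying the specific identity $[x_1,[x_2,x_3],[x_4,x_5]]=0$, and then cites \cite[Theorems~6.3.3 and~6.3.39]{Rowen} to conclude that finitely generated subrings of $R/I$ are nilpotent. Your route is more elementary: you observe that $\delta_nR=0$ is already a multilinear identity, so $R$ is nil PI outright, and then appeal to the classical fact that nil PI-rings are locally nilpotent. This bypasses the structure theorem entirely, which is a genuine simplification; the trade-off is that the paper's intermediate step isolates a concrete low-degree identity, which can be useful elsewhere. (A minor quibble: Braun's theorem is overkill here and is really about affine PI-algebras; the Kaplansky--Levitzki/Shirshov attribution you add parenthetically is the right one.)

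For the second assertion, the paper simply notes that part one already gives local nilpotence, hence local nilpotence as a $\mathbb{Q}$-algebra, and then applies \cite[Theorem~B]{Riley98} directly. You instead argue nil $\Rightarrow$ Engel and invoke Corollary~\ref{hoho}. Both routes terminate at the same result of Riley, so the difference is cosmetic; the paper's version is marginally shorter since it reuses what was just proved rather than introducing the Engel condition. Your remark that $R$ is PI is correct but unused, since Corollary~\ref{hoho} does not require it.
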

\begin{proof} The ring $R$ contains a nilpotent ideal $I$ such that $R/I$ satisfies the identity $$[x_1,[x_2,x_3],[x_4,x_5]]=0$$
(see \cite[Theorem 2.1]{Sharma_Srivastava85} and \cite[Theorem]{Zalesskii_Smirnov_82}). This implies that every finitely generated subring of $R/I$ is nilpotent  (see e.g. \cite[Theorems 6.3.3 and 6.3.39]{Rowen}), so  $R$ is a locally nilpotent ring.
If $R$ is a ${\mathbb Q}$-algebra, then $R$ is locally nilpotent as an algebra and, by \cite[Theorem B]{Riley98}, it is Lie nilpotent.
\end{proof}

\begin{proof}[Proof of Theorem $\ref{ZAZ}$.]
$(i)$  The quotient ring $R/{\mathbb P}(R)$ is semiprime and so its adjoint group $(R/{\mathbb P}(R))^\circ$ is abelian by Corollary \ref{PP}. Moreover,   $(R/{\mathbb P}(R))^\circ \cong R^\circ /{\mathbb P}(R)^\circ $ and ${\mathbb P}(R)^\circ$ is nilpotent by \cite{Jennings47}.

$(ii)$ {  Let $P$ be a prime ideal of $R$. If $J(R/P)$ is nonzero, then $[J(R),R]\subseteq P$ in view of \cite[Theorem A]{Catino_Miccoli_Sysak}. This implies that $[J(R),R]\subseteq {\mathbb P}(R)$.

Since $R/{\mathbb P}(R)$ is semiprime, then $J(R)\subseteq {\mathbb P}(R)$ or $J(R/{\mathbb P}(R))$ is commutative in view of \cite[Theorem B]{Catino_Miccoli_Sysak}. From this it follows that $J(R/{\mathbb P}(R))^2\cdot C(R/{\mathbb P}(R))=0$ and consequently
$J(R/{\mathbb P}(R))\cdot C(R/{\mathbb P}(R))=0$ what gives that $$J(R)\cdot C(R)\subseteq {\mathbb P}(R).$$}

 $(iii)$ Let $R$ be a $2$-torsion-free Lie solvable ring.

$(a)$  First, assume that $R$ is prime of solvable length $n>1$. Since $[R^{(n-1)},R^{(n-1)}]=0$, we conclude that $R^{(n-1)}\subseteq Z(R)$ by \cite[Lemma 1]{Herstein70}.  But then $[R^{(n-2)},R^{(n-2)}]\subseteq R^{(n-1)}$ and we obtain a contradiction in view of \cite[Lemma 1]{Herstein70}. Hence $R$ is commutative. This implies that $C(R)\subseteq {\mathbb P}(R)=N(R)$ in general case. If $R$ has unity,  then $R$ is abelian in view of \cite[3.20]{Tuganbaev02} and
$N(R)$ is a locally nilpotent ring by Lemma \ref{LSLN}.

$(b)$  If $R^\circ$ is torsion, then it is locally finite  (and so it locally nilpotent by \cite[Corollary 2]{Amberg_Dickenschied_Sysak}).

$(c)$ If $N(R)^+$ is torsion-free divisible, then $N(R)$ is a locally nilpotent $\mathbb Q$-algebra and so it satisfies the Engel condition. The algebra   $N(R)$ is Lie nilpotent  by \cite[Theorem B]{Riley98} and $N(R)^\circ$ is nilpotent  by \cite{Jennings47}, as required.
\end{proof}

\begin{corollary} Let $R$ be a right Goldie ring (or $R$ satisfies the ascending chain condition on both left and right annihilators) with unity.
 If $R$  satisfies one of the conditions:
\begin{itemize}
\item[$(i)$] $R$ is Engel   as a Lie ring;
\item[$(ii)$] $R$ is $2$-torsion-free locally finite,
 \end{itemize}
 then $R$ is Lie solvable and the unit group $U(R)$ is nilpotent-by-abelian.

\end{corollary}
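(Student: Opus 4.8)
The plan is to reduce the corollary to the theorems already established in the excerpt, treating the two hypotheses $(i)$ and $(ii)$ separately but noting that both force $R$ to be, in essence, a ``nice'' ring to which Theorem~\ref{ZAZ} applies. First I would observe that in both cases $R$ is right quasi-duo with $C(R)\subseteq\mathbb P(R)=N(R)$: in case $(i)$ this is Corollary~\ref{19}$(i)$, and in case $(ii)$ this is Lemma~\ref{LF2}$(iv)$ (or Proposition~\ref{LF3}$(ii)$). Moreover an Engel ring is Lie solvable once one knows $R^\circ$ is (locally nilpotent)-by-abelian with $N(R)$ locally nilpotent — but for Lie solvability I want to lean on the structure of $R/\mathbb P(R)$, which is semiprime and hence commutative by Corollary~\ref{PP}$(i)$ in case $(i)$ and by Lemma~\ref{LF2}$(ii)$ in case $(ii)$.

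The key point is to use the Goldie (or ascending chain condition on annihilators) hypothesis to control the prime radical. The plan is: since $R/\mathbb P(R)$ is commutative, $C(R)\subseteq\mathbb P(R)$, so $\mathbb P(R)\supseteq C(R)$ and $R$ is Lie solvable provided $\mathbb P(R)$ is itself Lie solvable. Now $\mathbb P(R)=N(R)$ is a nil ideal, and a nil right Goldie ring is nilpotent by a classical theorem of Lanski (or Herstein–Small); more generally a nil ideal satisfying the ascending chain condition on annihilators is nilpotent. Hence $\mathbb P(R)^{k}=0$ for some $k$, so $\delta_{k}R\subseteq\mathbb P(R)^{?}$... more precisely $[R,R]\subseteq C(R)\subseteq\mathbb P(R)$ gives $\delta_1 R\subseteq\mathbb P(R)$, and then $\delta_{n+1}R\subseteq(\mathbb P(R))^{2^n}$ by iterating $[\mathbb P(R),\mathbb P(R)]\subseteq\mathbb P(R)\cdot\mathbb P(R)$, so $\delta_m R=0$ for $m$ large. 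Thus $R$ is Lie solvable. Then $\mathbb P(R)^\circ$ is a nilpotent group (it is the adjoint group of a nilpotent ring, by \cite{Jennings47}), and $R^\circ/\mathbb P(R)^\circ\cong(R/\mathbb P(R))^\circ$ is abelian since $R/\mathbb P(R)$ is commutative; hence $U(R)=1+R^\circ$ (for $R$ unitary, $U(R)\cong R^\circ$ via $a\mapsto 1+a$) is nilpotent-by-abelian.

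For case $(ii)$ the same skeleton works once one invokes Lemma~\ref{LF2}: a $2$-torsion-free locally finite ring with unity has $\mathbb P(R)=N(R)=J(R)$ commutative-quotient, and the Goldie/annihilator-chain hypothesis again forces the nil ideal $\mathbb P(R)$ to be nilpotent; alternatively, a locally finite right Noetherian ring is right Artinian by Proposition~\ref{PiPi1}, and then $J(R)=\mathbb P(R)$ is nilpotent outright, so one can even bypass the Goldie argument. In either case $R$ is Lie solvable and $U(R)$ is nilpotent-by-abelian by the argument above.

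The main obstacle I expect is the step asserting that the nil prime radical $\mathbb P(R)=N(R)$ is actually \emph{nilpotent} (not merely locally nilpotent) under the annihilator chain condition — this is where the Goldie hypothesis is genuinely used, and it rests on Lanski's theorem that a nil subring of a ring with ACC on right annihilators is nilpotent (or Herstein–Small for the one-sided Goldie case). One must be careful that $\mathbb P(R)$ inherits the relevant chain condition: ascending chains of annihilators (computed in $\mathbb P(R)$) refine to chains of annihilators in $R$, which stabilize, so this is routine but should be stated. A secondary, purely bookkeeping, obstacle is making the passage $\delta_m R\subseteq\mathbb P(R)^{2^{m-1}}=0$ precise: one needs $[\mathbb P(R),\mathbb P(R)]\subseteq\mathbb P(R)^2$, which is immediate from $[a,b]=ab-ba$, and then induction. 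Neither of these is deep, so the corollary really is a corollary, but the nilpotency-of-the-nil-radical input is the load-bearing external fact.
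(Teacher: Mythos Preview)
Your proposal is correct and follows essentially the same route as the paper: obtain $C(R)\subseteq N(R)$ (via Corollary~\ref{19} in case~$(i)$ and Lemma~\ref{LF2} in case~$(ii)$), invoke Lanski's theorem \cite{Lanski69} (respectively Herstein--Small \cite{Herstein_Small}) to conclude the nil ideal $N(R)$ is nilpotent, and then read off Lie solvability of $R$ and the nilpotent-by-abelian structure of $U(R)$. One small misattribution: Corollary~\ref{19}$(i)$ only gives $C(R)\subseteq N_r(R)=N(R)$, not $\mathbb P(R)=N(R)$; the equality $\mathbb P(R)=N(R)$ follows \emph{after} you know $N(R)$ is nilpotent, but your argument does not actually need it earlier, so this is cosmetic.
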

\begin{proof}   We have that  $C(R)\subseteq N(R)$ by Corollary \ref{19} (by Lemma \ref{LF2}, respectively), and so $N(R)$ is an ideal of $R$. Since $N(R)$ is nilpotent by \cite[Theorem 1]{Lanski69} (by \cite[Theorem 1]{Herstein_Small}, respectively) and $R/N(R)$ is commutative, $R$ is Lie solvable and the unit group $U(R)$ is nilpotent-by-abelian.
\end{proof}

\begin{remark} If $R$ is a right Goldie $n$-Engel ring of prime characteristic $p>0$ and $n<p$, then it is Lie nilpotent in view of  \cite{Higgins54}.
\end{remark}


\section{Local  rings}


\begin{proof}[Proof of Theorem $\ref{LSS}$.] $(i)$ The group unit $U(R/J(R))$ is solvable what gives that $[R,R]\subseteq J(R)$ by \cite[Theorem 2]{Zalesskii65}. Since $J(R)^\circ$ is solvable, we deduce that  $J(R)$ (and consequently $R$) is Lie solvable by \cite[Theorem A]{Amberg_Sysak_III}. Moreover, the Levitzki radical $L(R)$ of $R$ is a $PI$-ring    by \cite[Theorem B(2)]{Amberg_Sysak_III} and so it is locally nilpotent. If $B=R/L(R)$, then $J(B)$ is commutative in view of \cite[Theorem B]{Amberg_Sysak_III} what implies that $C(B)^3=\overline{0}$ and consequently $B$ is commutative. Hence $C(R)\subseteq L=N(R)$.

$(ii)$ Since     $N(R)$  is a locally nilpotent ideal of $R$, the $\mathbb Q$-algebra   $N(R)$ is Lie nilpotent by \cite[Theorem B]{Riley98} and so  the adjoint group $N(R)^\circ$ is nilpotent by \cite{Jennings47}.\end{proof}

\begin{proposition} \label{XY2} Let $R$ be a local ring. The following statements are equivalent:
\begin{itemize}
\item[$(i)$] the unit group $U(R)$ is torsion;
\item[$(ii)$] $J(R)$ is nil and $R/J(R)$ is an absolute field of characteristic $p>0$;
\item[$(iii)$] $U(R)\cong (1+J(R))\rtimes U(R/J(R))$, where $1+J(R)$ is a $p$-group and $U(R/J(R))$ is a $p'$-group.
\end{itemize}
Therefore, a local ring $R$ with the torsion unit group $U(R)$ is a locally finite ring.
\end{proposition}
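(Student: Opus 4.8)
**The plan is to prove Proposition \ref{XY2} by cycling through the three equivalences and then deducing local finiteness from Proposition \ref{FSF}.** For $(i)\Rightarrow(ii)$, I would invoke Lemma \ref{krempa}$(i)$: if $U(R)$ is torsion then $J(R)$ is nil with torsion additive group $J(R)^+$ and $U(R/J(R))$ is torsion. Since $R$ is local, $R/J(R)$ is a simple ring; a simple ring whose multiplicative group (of units) is torsion must be a division ring by a Wedderburn-type argument, and then by Herstein's theorem (\cite[Theorem 8]{Herstein78}, cited in the introduction) a division ring with torsion multiplicative group is a field. An infinite field cannot have torsion unit group in characteristic $0$ (the unit group contains $\mathbb Z$), and in characteristic $p$ a field whose every nonzero element is a root of unity is exactly an absolute field by definition; so $R/J(R)$ is an absolute field of characteristic $p>0$.

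**For $(ii)\Rightarrow(iii)$,** assume $J(R)$ is nil and $\bar R:=R/J(R)$ is an absolute field of characteristic $p$. The short exact sequence $1\to 1+J(R)\to U(R)\to U(\bar R)\to 1$ always holds for local rings. Since $J(R)$ is nil, $1+J(R)$ is a $p$-group: for $x\in J(R)$ with $x^{p^k}=0$ we have $(1+x)^{p^k}=1+(\text{terms each a multiple of }p\text{ or involving }x^{p^k})$—more carefully, one uses that $1+J(R)$ is locally nilpotent (\cite[Corollary 1]{Amberg_Dickenschied_Sysak}, or just that $J(R)$ is a nil ring, so $1+x$ has finite order a power of $p$ by \cite[Lemma 2.4]{Amberg_Dickenschied}, as used in Lemma \ref{LF1}$(iii)$). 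The quotient $U(\bar R)$ is the multiplicative group of an absolute field of characteristic $p$, hence every element has order coprime to $p$, i.e.\ it is a $p'$-group. A $p'$-group acting on (extending) a $p$-group splits by the Schur--Zassenhaus theorem, giving $U(R)\cong(1+J(R))\rtimes U(R/J(R))$.

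**For $(iii)\Rightarrow(i)$** is immediate: an extension of a $p'$-group by a $p$-group, each of which is torsion (every element of the $p$-group $1+J(R)$ has finite order, and $U(R/J(R))$ torsion since it is a $p'$-group of the stated form), is torsion. Finally, for the concluding sentence, once $(i)$ holds we have from $(ii)$ that $J(R)$ is nil; a local ring with nil Jacobson radical is (strongly) $\pi$-regular by \cite[Lemma 3.1]{Huh_Kim_Lee04} (noted in the introduction), $R/J(R)$ is an absolute—hence locally finite—field, and $J(R)^\circ$ is a locally finite group since $J(R)$ is nil with torsion additive group. Then $R$ is a semilocal ring with locally finite unit group, so $R$ is locally finite by Proposition \ref{FSF}, $(ii)\Rightarrow(i)$.

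**The main obstacle I anticipate** is the step inside $(i)\Rightarrow(ii)$ showing that a local ring (simple modulo its radical) with torsion unit group must have $R/J(R)$ a division ring rather than a matrix ring over one: a matrix ring $M_n(D)$ with $n\ge 2$ over any division ring has non-torsion units (e.g.\ an elementary matrix $I+E_{12}$ has infinite order in characteristic $0$, and in characteristic $p$ one must instead exhibit a non-root-of-unity unit, e.g.\ using that $D$ is then infinite by Herstein since $D^*$ would need to be torsion-free-ish—this needs care). The cleanest route is probably: $U(R/J(R))$ torsion $\Rightarrow R/J(R)$ is a division ring by a direct argument (or cite that a simple ring with torsion unit group is a division ring, which follows from \cite[Corollary 1.2]{Krempa95} type reasoning together with semisimplicity), then apply Herstein. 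Everything else is bookkeeping with the exact sequence and Schur--Zassenhaus.
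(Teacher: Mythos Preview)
Your proposal follows essentially the same route as the paper, which proves only $(i)\Rightarrow(ii)$ explicitly and declares $(ii)\Rightarrow(iii)$ and $(iii)\Rightarrow(i)$ ``obvious''. Two remarks are in order.

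First, your ``main obstacle'' is not one. The paper's proof begins with ``Since $R/J(R)$ is a skew field\ldots'': for a local ring in the sense used here, $R/J(R)$ is already a division ring, so the matrix--ring scenario never arises. You then only need Herstein's theorem \cite[Theorem~8]{Herstein78} to get commutativity, exactly as the paper does; from $p(R/J(R))=0$ you get $pR\subseteq J(R)$, and since $J(R)^+$ is torsion (Lemma~\ref{krempa}$(i)$) and $p\cdot 1\in J(R)$ is nilpotent, $p^kR=0$ for some $k$, so $R/J(R)$ is an absolute field.

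Second, your appeal to Schur--Zassenhaus for $(ii)\Rightarrow(iii)$ needs care: the classical statement is for finite groups, and here $1+J(R)$ may be an infinite $p$-group. The cleanest way to produce the splitting is elementwise. Given $\bar a\in U(R/J(R))$ of order $n$ coprime to $p$, pick any lift $a\in U(R)$; then $a^n-1\in J(R)$ is nilpotent, so the commutative subring $\langle a\rangle_{\rm rg}$ is a finite ring (a quotient of $({\mathbb Z}/p^k{\mathbb Z})[X]/((X^n-1)^m)$). In its finite abelian unit group write $a=a_p\cdot a_{p'}$; then $a_{p'}$ is a lift of $\bar a$ of exact order $n$. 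Since $U(R/J(R))$ is locally cyclic (a union of multiplicative groups of finite fields), these lifts assemble to a section once one checks compatibility along the directed system of finite cyclic subgroups, which is what actually requires an argument beyond the finite Schur--Zassenhaus theorem. The paper does not supply this argument either, so your sketch is no less complete than the original; but you should not cite Schur--Zassenhaus without qualification.

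Your derivation of the final local--finiteness claim via Proposition~\ref{FSF} matches how the paper evidently intends the ``Therefore'' to be read.
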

\begin{proof} $(i)\Rightarrow (ii)$ Since $R/J(R)$ is a skew field and the unit group $U(R/J(R))$ is torsion, we deduce that $R/J(R)$ is commutative and  $p(R/J(R))=0$ for some prime $p$. Hence $pR\subseteq J(R)$  and $p^kR=0$ for some  $k\in {\mathbb N}$ in view of Lemma $\ref{krempa}(i)$. Since $U(R/J(R))$ is torsion, $R/J(R)$ is an absolute field.

$(ii)\Rightarrow (iii)$ and $(iii)\Rightarrow (i)$ are obviously.
\end{proof}

\begin{lemma} \label{C101C} Let $R$ be a local ring  which is Engel as a Lie ring. If $F(R)=0$, then either $R$ is a Lie nilpotent $\mathbb Q$-algebra or $pR\subseteq J(R)$ for some prime $p$, $C(R)\subseteq N_r(R)=N(R)$ and $N(R)^\circ$ is a torsion-free group. Moreover, if in the last case, $R$ is $n$-Engel, then it is Lie solvable and $N(R)^\circ$ is nilpotent (the $R^\circ$ is nilpotent-by-abelian and $R$ is Lie solvable).
\end{lemma}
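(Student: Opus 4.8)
The strategy is to dichotomize on the characteristic of the residue field $K:=R/J(R)$. Since $R$ is a unitary Engel ring, Corollary \ref{19} yields $C(R)\subseteq N_r(R)=N(R)\subseteq J(R)$ and shows that $N(R)$ is an ideal; in particular $[R,R]\subseteq C(R)\subseteq J(R)$, so the simple ring $K$ is commutative, hence a field. Because $F(R)=0$, the additive group $R^+$ is torsion-free, so $\charak R=0$ and $\charak K$ is either $0$ or a prime $p$; these two possibilities will produce the two alternatives of the statement.

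First I would dispose of the case $\charak K=0$. For every prime $q$ we then have $q\cdot 1\notin J(R)$ (otherwise $\charak K$ would divide $q$), and since $R$ is local this forces $q\cdot 1\in U(R)$; hence $\mathbb Q\subseteq Z(R)$ and $R$ is a $\mathbb Q$-algebra. Being Engel, $R$ is then Lie nilpotent by \cite[Theorem B]{Riley98} (compare Corollary \ref{hoho}), which is the first alternative.

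Now suppose $\charak K=p>0$. Then $p\cdot 1\in J(R)$, whence $pR\subseteq J(R)$, while $C(R)\subseteq N_r(R)=N(R)$ is again supplied by Corollary \ref{19}. It remains to see that $N(R)^\circ$ is torsion-free, and I expect this to be the main obstacle. Take $g\in N(R)$ with $g^{(k)}=0$ for some $k\geq 1$; under the isomorphism $R^\circ\to U(R)$, $a\mapsto 1+a$, this reads $(1+g)^k=1$, while $g^n=0$ for some $n$ makes $1+g$ unipotent. I would work inside the commutative subring $\mathbb Z[g]$: its additive group lies in the torsion-free $R^+$ and is finitely generated (spanned by $1,g,\dots,g^{n-1}$), so $\mathbb Z[g]$ embeds into $\mathbb Q[g]=\mathbb Z[g]\otimes_{\mathbb Z}\mathbb Q$. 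In $\mathbb Q[g]$ the minimal polynomial of $1+g$ over $\mathbb Q$ divides both $x^{k}-1$ and $(x-1)^{n}$; since $x^{k}-1$ is separable in characteristic $0$ these polynomials are coprime, so that minimal polynomial divides $x-1$, forcing $1+g=1$, i.e.\ $g=0$. Hence $g^{(k)}=0$ only for $g=0$, and $N(R)^\circ$ is torsion-free.

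For the ``moreover'' part, assume in addition that $R$ is $n$-Engel (still with $\charak K=p$). Since $F(R)=0$ gives $F(N(R))=0$, Corollary \ref{PP}$(iv)$ applies and $R^\circ$ is nilpotent-by-abelian; its proof shows that $N(R)^\circ$ is nilpotent, being a torsion-free $m$-Engel group (by \cite[Main Theorem]{Amberg_Sysak_I} and \cite{Zelmanov89}). By \cite{Jennings47} the nil ring $N(R)$ is then Lie nilpotent, so $N(R)^L$ is a nilpotent (hence solvable) Lie ideal of $R^L$; since $C(R)\subseteq N(R)$ the quotient $R^L/N(R)^L\cong (R/N(R))^L$ is abelian, so $R^L$ is solvable, i.e.\ $R$ is Lie solvable. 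This completes the plan.
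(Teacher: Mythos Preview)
Your proof is correct and follows essentially the same route as the paper: the same dichotomy (your split on $\charak K$ is equivalent to the paper's split on whether $R^+$ is divisible, since in a local ring $pR\subsetneq R$ iff $p\cdot 1\in J(R)$), the same appeal to Corollary~\ref{19} and \cite[Theorem B]{Riley98}, and the same chain \cite{Amberg_Sysak_I}\,$\to$\,\cite{Zelmanov89}\,$\to$\,\cite{Jennings47} for the $n$-Engel ``moreover'' clause. The one substantive difference is that for the torsion-freeness of $N(R)^\circ$ the paper simply invokes \cite[Lemma~2.4]{Amberg_Dickenschied}, whereas you supply a self-contained minimal-polynomial argument in $\mathbb Q[g]$; your argument is a valid reproof of that lemma in this special case and makes the step more transparent, at the cost of a few extra lines.
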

\begin{proof} If $R^+$ is a divisible group, then $R$ is a $\mathbb Q$-algebra and so it is Lie nilpotent by \cite[Theorem B]{Riley98}. Therefore, we assume in the next that $pR$ is proper in $R$ for some prime $p$. Then $pR\subseteq J(R)$ and   $C(R)\subseteq N_r(R)= N(R)$ by Corollary \ref{19}. Consequently, $N(R)$ is an ideal of $R$ and  the adjoint group $N(R)^\circ$ is torsion-free by \cite[Lemma 2.4]{Amberg_Dickenschied}.

Assume that $R$ is $n$-Engel. Since $N(R)^\circ$ is $m$-Engel for some $m\in\mathbb{N}$ by \cite[Main Theorem]{Amberg_Sysak_I}, it is nilpotent by the theorem of Zelmanov \cite{Zelmanov89}. Then $N(R)$ is Lie nilpotent by \cite{Jennings47} and consequently $R$ is Lie solvable.
\end{proof}

\begin{proof}[Proof of Theorem $\ref{FLE1}$.] In view of Lemma \ref{C101C}, we assume that $pR\subseteq J(R)$ for some prime $p$. Since $C(R)\subseteq N(R)\subseteq J(R)$ and $(J(R)/C(R))^\circ \cong J(R)^\circ /C(R)^\circ$ is abelian, we deduce that $N(R)$ is an ideal of $R$ and $J(R)^\circ$ is a solvable group. Moreover, $J(R)^\circ$ is $m$-Engel group for some  $m\in\mathbb{N}$ depending on $n$ by \cite[Main Theorem]{Amberg_Sysak_I}. Then the adjoint group $J(R)^\circ$ is locally nilpotent by \cite[Theorem 1]{Gruenberg}.

If $0\neq a\in \tau (J(R)^\circ )$, then $0=a^{(n)}=a(n+\sum\limits_{k=2}^n\binom{n}{k}a^{k-1})$ for some $n\in\mathbb{N}$. Hence $n\in J(R)$. Obviously, the order of each  element  of $U(R/J(R))$ is  relatively prime with $p$ and, thus,  $1\in J(R)$, a contradiction. Hence $\tau (J(R)^\circ )=0$ and, by theorem of Zelmanov  \cite{Zelmanov89}, $J(R)^\circ$ is nilpotent as a locally nilpotent  $m$-Engel torsion-free group. Since  $pR\subseteq J(R)$,  $\gamma_{n+1}(pR)=0$ for some integer $n\geq 0$ and
$$\begin{array}{rclrlrl}
\gamma_1(pR)&=&pR&{}&{}&=&p\gamma_1(R), \\
\gamma_2(pR)&=&[pR,pR]&=&p^2[R,R]&=&p^2\gamma_2(R),\\
&\vdots & {}&\vdots &{}&\vdots &{}\\
\gamma_{n+1}(pR)&=&[\gamma_n(pR),\gamma_n(pR)]&=&p^{n+1}[\gamma_n(R),\gamma_n(R)]&=&p^{n+1}\gamma_{n+1}(R).\end{array}$$
Thus we conclude that $\gamma_{n+1}(R)=0$, i.e., $R$ is Lie nilpotent.
\end{proof}

\begin{lemma} \label{XAL3} Let $R$ be a local ring with the nil Jacobson radical $J(R)$. If $R$ is Engel  and $\charak R/J(R)=0$, then $R$ is Lie nilpotent.
\end{lemma}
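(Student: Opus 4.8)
The plan is to reduce $R$ to a $\mathbb{Q}$-algebra and then invoke the fact that a Lie Engel algebra over a field of characteristic $0$ is Lie nilpotent.

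First I would extract what the Engel condition gives. By Corollary \ref{19}$(i)$ we have $C(R)\subseteq N(R)\subseteq J(R)$ and $N(R)$ is an ideal of $R$; since $J(R)$ is nil we also get $J(R)\subseteq N(R)$, so in fact $N(R)=J(R)$. From $C(R)\subseteq J(R)$ it follows that $R/J(R)$ is commutative, and as $R$ is local, $R/J(R)$ is a commutative simple ring with unity, hence a field; by hypothesis it has characteristic $0$.

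The key step is then to show $\mathbb{Q}\cdot 1\subseteq Z(R)$, i.e. that $R$ is a $\mathbb{Q}$-algebra. For each $n\in\mathbb{N}$ the element $n\cdot 1$ is not in $J(R)$, since its image in the characteristic-$0$ field $R/J(R)$ is nonzero, hence invertible there. Lifting an inverse to $s\in R$ yields $ns-1\in J(R)$ and $sn-1\in J(R)$, and since $1+j\in U(R)$ for every $j\in J(R)$, the element $n\cdot 1$ acquires both a left and a right inverse in $R$; being an integer multiple of $1$ it is central, so its inverse is central as well. Thus every nonzero integer is invertible in $Z(R)$ and $R$ is a $\mathbb{Q}$-algebra.

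Finally, $R$ is a Lie Engel $\mathbb{Q}$-algebra, so $R$ is Lie nilpotent by \cite[Theorem B]{Riley98}, as desired. I do not expect a genuine obstacle here: the proof is short, and the only mildly delicate point is the routine lifting of invertibility through $J(R)$. In effect the content of the lemma is the observation that a local ring with $\charak R/J(R)=0$ and commutative residue ring $R/J(R)$ is automatically a $\mathbb{Q}$-algebra, after which Riley's theorem closes the argument; the Engel hypothesis enters decisively only at this last step (and, via Corollary \ref{19}, to ensure $R/J(R)$ is commutative).
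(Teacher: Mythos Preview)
Your proof is correct and follows the same overall strategy as the paper: reduce to showing that $R$ is a $\mathbb{Q}$-algebra and then apply \cite[Theorem~B]{Riley98}. The only difference is in execution---the paper argues that $R^+$ is torsion-free (i.e., $F(R)=0$) and divisible, while you show directly that each $n\cdot 1$ is a unit of $R$; these are equivalent and equally short routes to the $\mathbb{Q}$-algebra conclusion.
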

\begin{proof}   If $0\neq a\in R$ and $pa=0$ for some prime $p$, then $a\cdot pR=0$ and therefore $pR\subseteq J(R)$, a contradiction. Hence $F(R)=0$. If $qR\neq R$ for some prime $q$, then $qR\subseteq J(R)$ and, for any $x\in R$, there exists  $k=k(x)\in\mathbb{N}$ such that $q^kx^k=0$, a contradiction. Hence $R^+$ is a divisible group. As a consequence, $R$ is an algebra over the rational numbers field $\mathbb Q$ and  $R$ is Lie nilpotent by \cite[Theorem B]{Riley98}.
\end{proof}

\section{Corollaries}


There are  large number of articles  which  extend  the  Cohen's Theorem \cite{Cohen} and Kaplansky Theorem \cite[Theorem 12.3]{Kaplansky49} (see e.g. \cite{Reyes, Szigeti_Wyk} and others). We also  present the following generalizations of these theorems.

\begin{corollary} \label{S25} Let $R$ be a ring with unity. If the commutator ideal $C(R)$ is nil (in particular, $R$ is  Engel), then the following statements hold:
\begin{itemize}  \item[$(i)$] if prime ideals of $R$ are finitely generated as right ideals, then the quotient ring $R/C(R)$ is a commutative Noetherian ring;
\item[$(ii)$] if $R$ is a right Noetherian ring and each its  maximal right ideal is principal, then $R/C(R)$ is a commutative principal ideal ring;
\item[$(iii)$] if each  prime ideal of $R$ is principal as a right ideal, then $R/C(R)$ is a commutative principal ideal ring;
\item[$(iv)$] if $R$ is  right Noetherian  and $R/{\mathbb P}(R)$ is finite, then $R/C(R)$ is finite.
\end{itemize}
\end{corollary}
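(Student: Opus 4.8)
The plan is to reduce each statement to a known commutative-ring extension of the Cohen and Kaplansky theorems, applied to the quotient $\bar R := R/C(R)$, which is a commutative ring with unity since $C(R)$ is by definition the ideal generated by all commutators. The key observation linking $R$ and $\bar R$ is that, when $C(R)$ is nil, $C(R) \subseteq N^*(R) \subseteq {\mathbb P}(R)$, so $C(R)$ sits inside the prime radical; hence prime ideals of $R$ correspond bijectively (preserving inclusion and the property of being finitely generated / principal as a right ideal) to prime ideals of $\bar R$, and ${\mathbb P}(R)/C(R) = {\mathbb P}(\bar R)$. I would open the proof by recording these remarks, together with the fact (already noted in the excerpt, via \cite[Application 2]{Herstein63}) that an Engel ring has nil commutator ideal, so that the parenthetical ``in particular'' is justified.

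For part $(i)$: if every prime ideal of $R$ is finitely generated as a right ideal, then every prime ideal of $\bar R$ is finitely generated, and by the commutative Cohen theorem (or the noncommutative strengthenings in \cite{Reyes, Szigeti_Wyk}) the ring $\bar R$ is Noetherian. For part $(iii)$: if every prime ideal of $R$ is principal as a right ideal, the images are principal ideals of the commutative ring $\bar R$, and the Kaplansky-type theorem \cite[Theorem 12.3]{Kaplansky49} that a commutative ring in which every prime ideal is principal is a principal ideal ring gives the conclusion. For part $(ii)$: here $R$ is already assumed right Noetherian, so $\bar R$ is a commutative Noetherian ring; a maximal right ideal of $\bar R$ is the image of a maximal right ideal of $R$, hence principal, so every maximal ideal of $\bar R$ is principal, and again the Kaplansky criterion (in its ``Noetherian + maximal ideals principal $\Rightarrow$ principal ideal ring'' form) applies.

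For part $(iv)$: $R$ right Noetherian forces $\bar R$ Noetherian, and $\bar R / {\mathbb P}(\bar R) \cong R/{\mathbb P}(R)$ is finite by hypothesis; one then climbs the nilpotency chain of the nilpotent ideal ${\mathbb P}(\bar R)$ (nilpotent because $\bar R$ is commutative Noetherian, so ${\mathbb P}(\bar R)^k = 0$ for some $k$), noting that each factor ${\mathbb P}(\bar R)^j/{\mathbb P}(\bar R)^{j+1}$ is a finitely generated module over the finite ring $\bar R/{\mathbb P}(\bar R)$, hence finite, so $\bar R$ is finite. The main obstacle — really the only nonroutine point — is making sure the correspondence between one-sided ideals of $R$ and ideals of $\bar R$ genuinely transports the hypotheses: ``finitely generated as a right ideal'' and ``principal as a right ideal'' must pass correctly through the quotient by $C(R)$, and the containment $C(R) \subseteq {\mathbb P}(R)$ is what guarantees that prime (and maximal) ideals are not disturbed. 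Once that bookkeeping is in place, each item is a direct citation.
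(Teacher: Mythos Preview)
Your strategy for $(i)$--$(iii)$ is the paper's: pass to the commutative quotient $\bar R = R/C(R)$ and invoke Cohen and Kaplansky there. One correction, however: the chain $C(R)\subseteq N^*(R)\subseteq {\mathbb P}(R)$ is written backwards --- the paper records ${\mathbb P}(R)\subseteq L(R)\subseteq N^*(R)$, and in general a nil ideal need not lie in the prime radical. Fortunately $(i)$--$(iii)$ do not require $C(R)\subseteq {\mathbb P}(R)$ at all: a prime (respectively maximal) ideal $\bar P$ of $\bar R$ automatically pulls back to a prime ideal (respectively maximal right ideal) $P\supseteq C(R)$ of $R$, and the finiteness/principality hypothesis on $P$ then pushes forward to $\bar P$. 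You only need this one-way correspondence, not a bijection. The paper expresses the same reduction by the single remark that $I+C(R)$ is a two-sided ideal for every right ideal $I$.

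For $(iv)$ your route genuinely differs. The paper first notes that in a right Noetherian ring the nil ideal $C(R)$ is nilpotent (Lanski \cite{Lanski69}), so here $C(R)\subseteq {\mathbb P}(R)$ does hold; then, since $R/{\mathbb P}(R)$ finite makes every $R/P$ Artinian, Lemma~\ref{SS24} forces $R$ itself to be right Artinian, and an external lemma \cite[Lemma 22]{Artemovych} finishes. Your argument instead stays inside the commutative Noetherian ring $\bar R$ and filters by powers of its nilpotent radical ${\mathbb P}(\bar R)$, each successive quotient being a finitely generated module over the finite ring $\bar R/{\mathbb P}(\bar R)$. This is more self-contained and avoids both the Artinian structure theorem and the outside citation, at the price of not yielding the extra information that $R$ is Artinian. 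Just note that your identification $\bar R/{\mathbb P}(\bar R)\cong R/{\mathbb P}(R)$ again uses $C(R)\subseteq {\mathbb P}(R)$; this is legitimate in $(iv)$ via the Noetherian hypothesis (nil $\Rightarrow$ nilpotent), but not via the reversed inclusion you wrote.
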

\begin{proof} $(i)$--$(iii)$ Since $I+C(R)$ is an ideal of $R$ for its any right ideal $I$,  the result follows from Cohen's Theorem  \cite{Cohen} and  Kaplansky Theorem \cite[Theorem 12.3]{Kaplansky49}.

$(iv)$ Without loss of generality, assume that ${\mathbb P}(R)\neq 0$. Obviously,   $C(R)$ is nilpotent by \cite[Theorem 1]{Lanski69} and Corollary \ref{PP} and   $C(R)\subseteq {\mathbb P}(R)\subseteq P$ for any nonzero prime ideal $P$ of $R$. Finally,  $R$ is Artinian  by Lemma \ref{SS24} and so $R/C(R)$ is finite by \cite[Lemma 22]{Artemovych}.
\end{proof}

\begin{remark} If $R$ is a Lie nilpotent algebra over a field of characteristic $0$ and $I=I^2$ is a  f.g.  ideal (as a one-sided ideal) of $R$, then $I=eR$ for some central idempotent $e\in R$.
\end{remark}
In fact, this follows from \cite[Theorem 1]{Szigeti98} and Corollary \ref{PP}$(ii)$.

\begin{proposition} Let $R$ be a nil ring. The following statements  hold:
 \begin{itemize}
\item[$(i)$]  if  $R$ is $n$-Engel as a Lie ring and $F(R)=0$, then $R^\circ$ is nilpotent (and so $R$ is Lie nilpotent);
\item[$(ii)$]  if $R$ is $n$-Engel of bounded index, then $R/F(R)$ is Lie nilpotent (and so the adjoint group $R^\circ$ is locally nilpotent and  torsion-by-(torsion-free nilpotent)).
 \end{itemize}
\end{proposition}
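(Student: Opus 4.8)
The plan is to treat the two parts of this Proposition as corollaries of machinery already developed for Engel rings, using the fact that a nil ring is automatically Engel \cite[Proposition 4.2]{Sysak10}. For part $(i)$, first observe that since $R$ is nil, $N(R)=R$, and since $R$ is $n$-Engel as a Lie ring, the adjoint group $R^\circ$ is $m$-Engel for some $m=m(n)\in\mathbb{N}$ by \cite[Main Theorem]{Amberg_Sysak_I}. Next, the hypothesis $F(R)=0$ forces the additive group $R^+$ to be torsion-free; I would then invoke \cite[Lemma 2.4]{Amberg_Dickenschied} to conclude that $R^\circ$ is a torsion-free group. An $m$-Engel torsion-free group is nilpotent by the theorem of Zelmanov \cite{Zelmanov89} (here one uses that $R^\circ=N(R)^\circ$ is locally nilpotent, e.g.\ via \cite[Lemma 2.2]{Amberg_Sysak_I}, so Zelmanov applies). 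Finally, a radical ring whose adjoint group is nilpotent is Lie nilpotent by \cite{Jennings47}; since every nil ring is radical, $R$ itself is Lie nilpotent.

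For part $(ii)$, the idea is to pass to the quotient $R/F(R)$, which is again a nil $n$-Engel ring of bounded index but now with zero torsion part, so part $(i)$ applies directly: $(R/F(R))^\circ$ is nilpotent and $R/F(R)$ is Lie nilpotent. The point that needs a little care is why $F(R)$ is actually an ideal (so that the quotient makes sense as a ring) — this should follow from $R$ being $2$-primal / the structure of $N(R)=R$ together with bounded index, or one can argue directly that in a nil ring of bounded index the torsion elements form an ideal. Once $R/F(R)$ is Lie nilpotent, $(R/F(R))^\circ$ is locally nilpotent, and $F(R)^\circ$ is locally nilpotent as the adjoint group of a nil (hence locally nilpotent-by-\ldots, but certainly locally finite on each finitely generated piece) torsion ring — more precisely $F(R)^\circ$ is a torsion locally nilpotent group. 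An extension of a torsion locally nilpotent group by a torsion-free nilpotent group is locally nilpotent and torsion-by-(torsion-free nilpotent), giving the parenthetical claims.

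The main obstacle I anticipate is justifying that $F(R)$ is a two-sided ideal of $R$ and that the bounded-index hypothesis genuinely survives passage to $R/F(R)$ in the form needed; the bounded-index condition is what should make $F(R)$ decompose into primary components with bounded exponent, and hence be an ideal, whereas without bounded index this can fail. A secondary point to handle cleanly is the invocation of Zelmanov's theorem in part $(i)$: one must first know local nilpotence of $R^\circ$ before upgrading an $m$-Engel torsion-free group to a nilpotent one, and this local nilpotence comes from \cite[Lemma 2.2]{Amberg_Sysak_I} or \cite[Corollary 1]{Amberg_Dickenschied_Sysak} applied to the radical ring $R$. Everything else is a routine assembly of cited results, so I would keep the write-up short, citing \cite{Sysak10}, \cite{Amberg_Sysak_I}, \cite{Amberg_Dickenschied}, \cite{Zelmanov89}, and \cite{Jennings47} in sequence for $(i)$, and then reducing $(ii)$ to $(i)$ via the quotient by $F(R)$.
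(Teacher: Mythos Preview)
Your part $(i)$ is essentially the paper's proof: $n$-Engel ring gives $m$-Engel adjoint group by \cite[Main Theorem]{Amberg_Sysak_I}, local nilpotence by \cite[Lemma 2.2]{Amberg_Sysak_I}, then Zelmanov \cite{Zelmanov89} upgrades a torsion-free locally nilpotent $m$-Engel group to nilpotent, and \cite{Jennings47} converts back to Lie nilpotence.

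For part $(ii)$ you take a different route from the paper. The paper invokes Levitzky \cite[Lemma 1.1]{Herstein69} (a nil ring of bounded index is locally nilpotent) to get $R$, and hence $R^\circ$, locally nilpotent in one stroke; it then applies the torsion-free step to $R/F(R)$. You instead reduce to part $(i)$ via the quotient. Two comments on your version.

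First, your worry about $F(R)$ being an ideal is misplaced. In any ring, if $na=0$ then $n(ra)=n(ar)=0$ for every $r\in R$, and sums of additively torsion elements are torsion; so $F(R)$ is always a two-sided ideal and $F(R/F(R))=0$ automatically. No bounded-index input is needed for this.

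Second, there is a real gap in your extension step. The class of locally nilpotent groups is \emph{not} closed under extensions, so the inference ``(torsion locally nilpotent)-by-(torsion-free nilpotent) $\Rightarrow$ locally nilpotent'' is invalid in general; likewise your claim that $F(R)^\circ$ is locally nilpotent merely because $F(R)$ is a nil torsion ring is not justified as stated. The repair is immediate: either reuse the part-$(i)$ mechanism on $R$ itself (the $n$-Engel hypothesis already makes $R^\circ$ $m$-Engel, hence locally nilpotent by \cite[Lemma 2.2]{Amberg_Sysak_I}), or invoke Levitzky as the paper does --- this is exactly where bounded index earns its keep. Once patched, your reduction to $(i)$ is sound; in fact it shows that the conclusions ``$R/F(R)$ is Lie nilpotent'' and ``$R^\circ$ is locally nilpotent'' already follow from the $n$-Engel hypothesis alone, with bounded index entering only through the paper's Levitzky shortcut.
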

\begin{proof} $(i)$ The adjoint group   $R^\circ$ is $m$-Engel for some $m\in\mathbb{N}$ by \cite[Main Theorem]{Amberg_Sysak_I} and, therefore, it is locally nilpotent by \cite[Lemma 2.2]{Amberg_Sysak_I}. As a consequence, $R^\circ$ is nilpotent by \cite{Zelmanov89} (see e.g. \cite[Corollary 1]{Amberg_Dickenschied_Sysak})
and   $R$ is Lie nilpotent by \cite{Jennings47}.

$(ii)$ In view of  the result of Levitzky \cite[Lemma 1.1]{Herstein69}, the ring $R$ is locally nilpotent and so $R^\circ$ is a locally nilpotent group.
If $F(R)=0$, then $R^\circ$ is nilpotent by \cite{Zelmanov89} and hence $R$ is Lie nilpotent by \cite{Jennings47}.
\end{proof}

\begin{proposition} Let $R$ be a ring such that $Dz(R)$ is  commutative. The following  holds:
\begin{itemize}
\item[$(i)$] either $Dz(R)^2\neq 0$ and $R$ is Lie metabelian (then the adjoint group $R^\circ $ is metabelian) or $Dz(R)^2=0$ (i.e., $R$ is $2$-primal); in the last case $R$ is commutative or $Dz(R)$ is completely prime ideal of $R$;
\item[$(ii)$] if $R$ is    Engel ($2$-torsion-free Lie solvable, $2$-torsion-free locally finite, respectively) with unity, then it is Lie  metabelian and $C(R)^3=0$.
\end{itemize}
\end{proposition}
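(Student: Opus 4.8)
The plan is to extract everything from two facts of Klein and Bell that hold under the standing hypothesis: since $Dz(R)$ is commutative, it is a two-sided ideal of $R$ by \cite[Theorem 5.5]{Klein_Bell_07}, and $Dz(R)^2\cdot C(R)=0$ by \cite[Lemma 5.4]{Klein_Bell_07}. To these I would add two elementary remarks: every nonzero nilpotent element of $R$ lies in $Dz(R)$ (if $a^k=0$ with $a^{k-1}\neq 0$ then $a\cdot a^{k-1}=0$); and a product of two two-sided regular (cancellable, nonzero) elements is again regular, so that $ab\in Dz(R)$ forces $a\in Dz(R)$ or $b\in Dz(R)$.

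For part $(i)$ I would follow the stated dichotomy. If $Dz(R)^2\neq 0$, fix $0\neq d\in Dz(R)^2$; then $d\,C(R)\subseteq Dz(R)^2C(R)=0$, so every nonzero element of $C(R)$ is a (right) zero divisor and hence $C(R)\subseteq Dz(R)$. Since $Dz(R)$ is commutative, $\delta_2R\subseteq[C(R),C(R)]=0$, so $R$ is Lie metabelian. To obtain that $R^\circ$ is metabelian I would pass to the Dorroh unitalization $R^1={\mathbb Z}\cdot 1\oplus R$: as $1$ is central, $[R^1,R^1]=[R,R]$, so $R^1$ is again Lie metabelian, hence $U(R^1)$ is metabelian by \cite{Sharma_Srivastava92} and \cite[Theorem 1]{Krasilnikov}, and $R^\circ$ embeds in $U(R^1)$ via $r\mapsto 1+r$. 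If instead $Dz(R)^2=0$, then $Dz(R)$ is a nilpotent ideal, so $Dz(R)\subseteq{\mathbb P}(R)\subseteq N(R)$, while $N(R)\subseteq Dz(R)$ by the first remark; thus ${\mathbb P}(R)=N(R)=Dz(R)$ and $R$ is $2$-primal. For the final alternative: if $Dz(R)=R$ then $R^2=Dz(R)^2=0$ and $R$ is (trivially) commutative; otherwise $R/Dz(R)\neq 0$ has no zero divisors by the second remark, so $Dz(R)$ is a completely prime ideal.

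For part $(ii)$ the point is that under each of the three hypotheses the commutator ideal $C(R)$ is nil: for an Engel ring by Corollary \ref{19}, for a $2$-torsion-free Lie solvable ring by Theorem \ref{ZAZ}$(iii)(a)$, and for a $2$-torsion-free locally finite ring with unity by Lemma \ref{LF2}$(iv)$. Since every nonzero element of a nil ideal is a zero divisor, $C(R)\subseteq Dz(R)$; exactly as in part $(i)$ this gives $\delta_2R\subseteq[C(R),C(R)]=0$, so $R$ is Lie metabelian (and $U(R)$ is metabelian by \cite{Sharma_Srivastava92} and \cite[Theorem 1]{Krasilnikov}). Finally $C(R)^2\subseteq Dz(R)^2$, whence $C(R)^3=C(R)^2\cdot C(R)\subseteq Dz(R)^2\cdot C(R)=0$.

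I do not expect a genuine obstacle, since the argument is largely an assembly of cited results. The two points requiring care are the inclusion $C(R)\subseteq Dz(R)$ — which in part $(i)$ relies on $Dz(R)^2\neq 0$ so that $C(R)$ is annihilated on one side by a nonzero element, and in part $(ii)$ comes from $C(R)$ being nil — and, in part $(i)$, the passage to the Dorroh extension, which is what allows one to invoke the unital statement ``Lie metabelian implies units metabelian'' for a possibly non-unital ring.
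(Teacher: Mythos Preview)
Your argument is correct and rests on the same Klein--Bell results as the paper's proof, but the organization differs in a couple of useful ways. For part~$(i)$ the paper simply cites \cite[Theorem~5.7]{Klein_Bell_07} to obtain that $R$ is Lie metabelian when $Dz(R)^2\neq 0$, whereas you derive this from \cite[Theorem~5.5 and Lemma~5.4]{Klein_Bell_07} via the inclusion $C(R)\subseteq Dz(R)$; your route is slightly more self-contained. For part~$(ii)$ the paper splits into the two cases $Dz(R)^2=0$ and $Dz(R)^2\neq 0$ (using $N(R)^2=0$ in the first case and \cite[Theorem~5.7]{Klein_Bell_07} in the second), while your observation that $C(R)$ is nil under each hypothesis, hence $C(R)\subseteq Dz(R)$, gives both conclusions uniformly without a case split. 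Finally, you supply the justification that $R^\circ$ is metabelian via the Dorroh extension and the Sharma--Srivastava/Krasilnikov theorem, a step the paper's proof leaves implicit; this is a genuine addition, and your reasoning there is sound.
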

\begin{proof} Assume that $R$ is non-commutative.

$(i)$ The set $N(R)$ is an ideal of $R$ by \cite[Theorem 5.7]{Klein_Bell_07}. If $Dz(R)^2\neq 0$, then $R$ is Lie metabelian by   \cite[Theorem 5.7]{Klein_Bell_07}.

Now, assume that    $Dz(R)^2=0$. Thus $Dz(R)=N(R)$ and $ab\in Dz(R)$ implies that $a\in Dz(R)$ or $b\in Dz(R)$ for each $a,b\in R$ what means that $Dz(R)$ is completely prime.

$(ii)$ If $Dz(R)^2=0$, then $N(R)^2=0$ and therefore $R$ is Lie metabelian in view of Corollary  \ref{19} (Theorem \ref{ZAZ}$(iii)$, Lemma \ref{LF2}$(iv)$, respectively).

From \cite[Theorem 5.7]{Klein_Bell_07} it follows that $R$ is Lie metabelian also in the case $Dz(R)^2\neq 0$.

If $D(R)^2\neq 0$, then $D(R)^2C(R)=0$ by \cite[Lemma 5.4]{Klein_Bell_07} and so $C(R)^3=0$.

 Finally, $Dz(R)^2C(R)=0$ by \cite[Lemma 5.4]{Klein_Bell_07} and so $C(R)^3=0$.
\end{proof}

\begin{remark} Let $R$ be a ring such that  the set $N(R)$ is commutative. If $R$ is an Engel ($2$-torsion-free  Lie  solvable, $2$-torsion-free locally finite, respectively), then $N(R)$ is an ideal of $R$, $C(R)\subseteq N(R)$ and $N(R)^2\cdot C(R)=0$.

\end{remark}

{  In fact,  $C(R)\subseteq N(R)$ by Corollary \ref{19} (by Theorem \ref{ZAZ}, Lemma \ref{LF2}$(iv)$, respectively)
what gives that $N(R)$ is an ideal of $R$ and the result follows.}

\begin{corollary}  Let $R$ be a ring with unity. The following statements hold:
\begin{itemize}
\item[$(i)$] if $R$ is right Noetherian $\pi$-regular and  satisfies  the $n$-Engel condition (is  $2$-torsion-free Lie solvable, respectively), then   it is a field or right Artinian;
\item[$(ii)$]  if $R$ is an Engel  ring of bounded index and $J(R)$ is nil, then 
\[
C(R)\subseteq {\mathbb P}(R)=N(R)=J(R),
\] 
$N(R)$ is a locally nilpotent ideal of $R$ and $U(R)$ is (locally nilpotent)-by-abelian.
\end{itemize}
\end{corollary}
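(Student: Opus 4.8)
The strategy in both parts is to pass to the quotients $R/P$, $P$ a prime ideal of $R$, where the ``small commutator ideal'' conclusions of the earlier results are available, and then to glue; throughout I use that the relevant hypotheses (Engel, $n$-Engel, Lie solvability, $2$-torsion-freeness, $\pi$-regularity, right Noetherianity, bounded index of nilpotency) are inherited by homomorphic images.

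For $(i)$: quote Corollary \ref{PP}$(i)$ in the $n$-Engel case and Theorem \ref{ZAZ}$(iii)(a)$ in the $2$-torsion-free Lie solvable case to get $C(R)\subseteq{\mathbb P}(R)=N(R)$. Then $R/P$ is a commutative domain which is right Noetherian and $\pi$-regular; for a nonzero element $a$ of it, $a^{k}=a^{k}ba^{k}=a^{2k}b$ gives $a^{k}(1-a^{k}b)=0$, hence $1=a^{k}b$ by cancellation, so $a$ is invertible and $R/P$ is a field, in particular right Artinian. By Lemma \ref{SS24}, $R$ is then prime or right Artinian. If right Artinian we are done; if prime, ${\mathbb P}(R)=0$, so $R=R/{\mathbb P}(R)$ is a commutative $\pi$-regular domain, hence a field by the same computation.

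For $(ii)$: by Corollary \ref{19}$(i)$, $C(R)\subseteq N_{r}(R)=N(R)\subseteq J(R)$, $N(R)$ is an ideal, and $R^{\circ}$ is Engel-by-abelian; since $J(R)$ is nil, $J(R)\subseteq N(R)$, so $N(R)=J(R)$. This is a nil ideal of bounded index, hence locally nilpotent by Levitzki's theorem \cite{Herstein69}, so $N(R)=L(R)$ is a locally nilpotent ideal. To upgrade $C(R)\subseteq N(R)$ to $C(R)\subseteq{\mathbb P}(R)=N(R)$ I argue prime by prime: each $R/P$ is a prime Engel ring of bounded index, a prime ring of bounded index has no nonzero nil one-sided ideal, so $N_{r}(R/P)=0$ and Lemma \ref{cl2} makes $R/P$ commutative, i.e.\ a domain; therefore $C(R)\subseteq P$ and $N(R)\subseteq P$ for every prime $P$, and intersecting (together with the general inclusion ${\mathbb P}(R)\subseteq N(R)$) yields $C(R)\subseteq{\mathbb P}(R)=N(R)=J(R)$. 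Finally, since $R\ni1$ we have $U(R)\cong R^{\circ}$; here $R^{\circ}/N(R)^{\circ}\cong(R/N(R))^{\circ}$ is abelian because $R/N(R)$ is commutative, and $N(R)^{\circ}$ is a locally nilpotent group because $N(R)$ is a locally nilpotent ring, so $U(R)$ is (locally nilpotent)-by-abelian.

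The only step that is not mere bookkeeping with the earlier results is the input used for $(ii)$ that a prime ring of bounded index of nilpotency carries no nonzero nil right ideal — equivalently, that such a ring is right Goldie, so that a nonzero nil right ideal would be nilpotent and hence zero by primeness. Quoting this accurately (and checking that it survives without a unity in $R/P$) is the delicate point; if one wishes to bypass it, the alternative is to prove directly that the image of $N(R)$ in a prime quotient, being a locally nilpotent ideal of a prime ring, must vanish, which rests on the same fact about (semi)prime rings of bounded index.
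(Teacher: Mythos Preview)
Your argument is correct and follows the same overall strategy as the paper: in $(i)$, show $R/P$ is a field for every prime $P$ and then invoke Lemma~\ref{SS24}; in $(ii)$, get $C(R)\subseteq N(R)=J(R)$ from Corollary~\ref{19}, push commutativity down to each prime quotient to obtain $C(R)\subseteq\mathbb P(R)$, and use Levitzki for local nilpotence. Two small remarks. First, in $(i)$ the paper reaches ``$R/P$ is a field'' via the abelian property (Corollary~\ref{PP}$(ii)$/Theorem~\ref{ZAZ}$(iii)(a)$) together with \cite[Theorem~5]{Badawi}; your direct computation that a commutative $\pi$-regular domain is a field is a clean, self-contained alternative. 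Second, your caveat about unity in $R/P$ is unnecessary here: the statement assumes $R\ni 1$, so every $R/P$ inherits a unity and the Levitzki argument (nil one-sided ideal of bounded index $\Rightarrow$ nilpotent $\Rightarrow$ zero in a prime ring) goes through without subtlety; in fact your explicit justification of ``$R/P$ is commutative'' in $(ii)$ fills in a step the paper leaves tacit.
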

\begin{proof}
$(i)$  The ring $R$ is abelian by Corollary \ref{PP} (by Theorem \ref{ZAZ}, respectively) and, therefore, $R/P$ is a field for any prime ideal $P$ by \cite[Theorem 5]{Badawi}. Hence each  prime ideal is a maximal right ideal. In view of Lemma \ref{SS24} and Corollary \ref{PP}$(i)$, $R$ is a field or right Artinian.

$(ii)$ {  We obtain that $C(R)\subseteq N(R)=J(R)$ by Corollary \ref{19}. Then $R/P$ is commutative for each prime ideal $P$ of $R$ and so $C(R)\subseteq {\mathbb P}(R)$. The ideal $N(R)$ is locally nilpotent in view of \cite[Lemma 1.1]{Herstein69} and so  the unit group $U(R)$ is (locally nilpotent)-by-abelian.}
\end{proof}

Finitely generated non-commutative radical rings need not be nil \cite{Smoktunowicz_Puczylowski}.
Each  nil ring is Engel \cite[Proposition 4.2]{Sysak10} and  the adjoint group $R^\circ$ of a nil algebra $R$ is locally graded  (i.e., each  finitely generated infinite subgroup of $R^\circ$ contains a proper subgroup of finite index) \cite[Theorem 1]{Sozutov_Alexandrova}. We extend this result in the following way.

\begin{corollary}  For each  Engel ($2$-torsion-free Lie solvable, $2$-torsion-free locally finite, respec\-ti\-ve\-ly) algebra $R$, its  adjoint group  $R^\circ$ is locally graded.
\end{corollary}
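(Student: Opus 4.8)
The plan is to reduce each of the three cases to the known local gradedness of the adjoint group of a nil algebra, combined with an elementary extension argument through an abelian quotient.

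First I would set $N:=N(R)$ and collect the common input. In the Engel case $C(R)\subseteq N_r(R)=N$ and $N$ is an ideal of $R$ by Corollary~\ref{19}$(i)$; in the $2$-torsion-free Lie solvable case $C(R)\subseteq {\mathbb P}(R)=N$ with $N$ an ideal by Theorem~\ref{ZAZ}$(iii)(a)$; in the $2$-torsion-free locally finite case $C(R)\subseteq {\mathbb P}(R)=N=J(R)$ with $N$ an ideal by Lemma~\ref{LF2}$(iv)$. Hence in all three situations $N$ is a nil subalgebra of $R$ and the quotient algebra $R/N$ is commutative, so $N^\circ$ is a normal subgroup of $R^\circ$ with $R^\circ/N^\circ\cong(R/N)^\circ$ abelian; in particular $[R^\circ,R^\circ]\subseteq N^\circ$.

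Since $N$ is a nil algebra, $N^\circ$ is locally graded by \cite[Theorem~1]{Sozutov_Alexandrova}. Now let $H$ be any finitely generated infinite subgroup of $R^\circ$ and put $M:=H\cap N^\circ$; this is a normal subgroup of $H$ with $H/M$ finitely generated and abelian, since $[H,H]\subseteq N^\circ$. If $M$ has infinite index in $H$, then $H/M$ is a finitely generated infinite abelian group, hence maps onto ${\mathbb Z}$ and so possesses a proper subgroup of finite index, whose preimage in $H$ is a proper subgroup of finite index. If $M$ has finite index in $H$, then $M$ is finitely generated (a finite-index subgroup of a finitely generated group is finitely generated) and infinite (as $H$ is), so $M$ is a finitely generated infinite subgroup of the locally graded group $N^\circ$ and therefore contains a subgroup $K$ with $1<[M:K]<\infty$; then $[H:K]=[H:M]\,[M:K]$ is finite and at least $2$, so $K$ is a proper subgroup of finite index in $H$. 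In either case $H$ has a proper subgroup of finite index, and hence $R^\circ$ is locally graded.

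The substantive ingredient is \cite[Theorem~1]{Sozutov_Alexandrova}, and I do not expect a real obstacle beyond correctly assembling the pieces. The one point that genuinely matters is that the extension step forces the quotient $R^\circ/N^\circ$ to be \emph{abelian}, which is precisely the content of the inclusions $C(R)\subseteq N(R)$ furnished, in the three respective cases, by Corollary~\ref{19}, Theorem~\ref{ZAZ} and Lemma~\ref{LF2}; the auxiliary facts used (an ideal of $R$ yields a normal subgroup of $R^\circ$; a finite-index subgroup of a finitely generated group is finitely generated; a finitely generated infinite abelian group surjects onto ${\mathbb Z}$) are standard.
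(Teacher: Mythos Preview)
Your proof is correct and follows essentially the same approach as the paper: in each case one invokes Corollary~\ref{19}, Theorem~\ref{ZAZ}$(iii)$, or (in the paper) Proposition~\ref{LF3}$(ii)$ rather than Lemma~\ref{LF2}$(iv)$ to see that $N(R)$ is a nil ideal with $R^\circ/N(R)^\circ$ abelian, then applies \cite[Theorem~1]{Sozutov_Alexandrova} to $N(R)^\circ$ and passes up through the abelian quotient. The only difference is that you spell out explicitly the extension step (locally graded by abelian is locally graded) that the paper takes for granted.
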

\begin{proof} The set  $N(R)$ is an ideal of $R$ by Corollary \ref{19} (by Theorem \ref{ZAZ}$(iii)$ and Proposition \ref{LF3}$(ii)$, respectively) and $N(R)^\circ$ is locally graded  by \cite[Theorem 1]{Sozutov_Alexandrova}. Inasmuch as $R^\circ /N(R)^\circ$ is abelian, we deduce that $R^\circ$ is locally graded.
\end{proof}

\begin{proposition} \label{47} If $R$ is a ring of bounded index, then the following statements hold:
\begin{itemize}
\item[$(i)$] $ {\mathbb P}(R)=N_r(R)$ is a locally nilpotent ring (and so ${\mathbb P}(R)^\circ$ is a locally nilpotent group);
\item[$(ii)$] if $R$ is  Engel ($2$-torsion-free locally solvable, $2$-torsion-free locally finite, respectively), then  $R^\circ$ is a (locally nilpotent)-by-abelian group;
\item[$(iii)$] if $R$ is $n$-Engel and $F(R)=0$, then $N(R)$ is nilpotent and $R^\circ$ is (torsion-free nilpotent)-by-abelian;
\item[$(iv)$] if $R$ is $n$-Engel, then $N(R)^\circ$ is (torsion  locally nilpotent)-by-(torsion-free nilpotent).
\end{itemize}
\end{proposition}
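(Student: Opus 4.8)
\emph{Strategy.} The plan is to pin down $N(R)$ via the prime radical in part $(i)$, and then to read off parts $(ii)$--$(iv)$ by feeding this into the quoted results on Engel rings, radical rings, and (torsion-free) locally nilpotent Engel groups. For part $(i)$ I would argue that $N_r(R)$, being a nil ideal in which every element has nilpotency index at most the global bound, is a nil ideal of bounded index, hence locally nilpotent by Levitzki's theorem \cite[Lemma 1.1]{Herstein69}; as a locally nilpotent ideal it lies in the Levitzki radical, so together with the standard chain ${\mathbb P}(R)\subseteq L(R)\subseteq N^*(R)\subseteq N_r(R)$ this yields $L(R)=N^*(R)=N_r(R)$. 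To upgrade this to ${\mathbb P}(R)=N_r(R)$ I would pass to the semiprime ring $R/{\mathbb P}(R)$, which is again of bounded index, and realize it as a subdirect product of its prime images over the minimal primes: each such image is a prime ring of bounded index, hence a Goldie ring (classically an order in a simple Artinian ring), so the image of $N_r$ there is a nil subring, therefore nilpotent by Lanski's theorem \cite[Theorem 1]{Lanski69} and hence zero by primeness. Thus ${\mathbb P}(R)=N_r(R)$ is a locally nilpotent ring, and consequently ${\mathbb P}(R)^\circ$ is a locally nilpotent group, since any finitely generated subgroup of it lies in the (nilpotent) adjoint group of the finitely generated subring it generates.

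\emph{Part $(ii)$.} In each of the three cases I would first establish $C(R)\subseteq {\mathbb P}(R)$, so that $R/{\mathbb P}(R)$ is commutative and reduced, which forces $N(R)\subseteq {\mathbb P}(R)$ and hence, with part $(i)$, $N(R)={\mathbb P}(R)=N_r(R)$. When $R$ is Engel this is Corollary \ref{19}$(i)$ combined with part $(i)$; when $R$ is $2$-torsion-free locally finite it is Lemma \ref{LF2}$(iv)$ (or Proposition \ref{LF3}$(ii)$); when $R$ is $2$-torsion-free locally solvable I would note that every two-generated subring is $2$-torsion-free Lie solvable, so by Theorem \ref{ZAZ}$(iii)(a)$ every commutator of $R$ is nilpotent, whence, since $R$ has bounded index $n$, the ring $R$ satisfies the polynomial identity $[x,y]^n=0$; as $M_k(D)$ with $k\ge 2$ violates this identity ($[E_{12},E_{21}]$ is not nilpotent), Posner's theorem forces the prime images of $R/{\mathbb P}(R)$ to be division rings, so $R/{\mathbb P}(R)$ is commutative. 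Since $N(R)={\mathbb P}(R)$ is then an ideal, $N(R)^\circ$ is normal in $R^\circ$, the quotient $R^\circ/N(R)^\circ\cong (R/N(R))^\circ$ is abelian, and $N(R)^\circ$ is locally nilpotent by part $(i)$; hence $R^\circ$ is (locally nilpotent)-by-abelian.

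\emph{Parts $(iii)$ and $(iv)$.} Now $R$ is $n$-Engel, so by Corollary \ref{PP}$(i)$ the set $N(R)={\mathbb P}(R)$ is an ideal with $R/N(R)$ commutative, and $N(R)^\circ$ is $m$-Engel for some $m=m(n)$ by \cite[Main Theorem]{Amberg_Sysak_I}. For $(iii)$, the hypothesis $F(R)=0$ makes $N(R)$ a nil ring of bounded index with torsion-free additive group, so $N(R)$ embeds in $N(R)\otimes_{\mathbb Z}{\mathbb Q}$, which is again nil of bounded index (each of its elements is $\tfrac1{k}(r\otimes 1)$ with $r\in N(R)$, $k\in{\mathbb N}$); the Nagata--Higman theorem then shows that $N(R)\otimes{\mathbb Q}$, and hence $N(R)$, is associatively nilpotent, so $N(R)^\circ$ is a nilpotent group, torsion-free by \cite[Lemma 2.4]{Amberg_Dickenschied}, while $R^\circ/N(R)^\circ$ is abelian; thus $R^\circ$ is (torsion-free nilpotent)-by-abelian. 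For $(iv)$, $N(R)={\mathbb P}(R)$ is locally nilpotent by part $(i)$, so $N(R)^\circ$ is a locally nilpotent $m$-Engel group; its torsion elements form a normal subgroup $T$, which is torsion and locally nilpotent, and the quotient $N(R)^\circ/T$ is torsion-free, locally nilpotent and $m$-Engel, hence nilpotent by Zelmanov's theorem \cite{Zelmanov89}; therefore $N(R)^\circ$ is (torsion locally nilpotent)-by-(torsion-free nilpotent). The single genuinely non-routine step here is the identification ${\mathbb P}(R)=N_r(R)$ in part $(i)$ --- equivalently, that a semiprime ring of bounded index has no nonzero nil one-sided ideal --- which is where the Goldie structure of prime rings of bounded index and Lanski's theorem are really used; everything after that is an assembly of the cited results.
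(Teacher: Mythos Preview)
Your argument has a genuine error in part~$(i)$: the claim that a prime ring of bounded index is Goldie is false. The free algebra $k\langle x,y\rangle$ over a field is a domain (hence prime, and trivially of bounded index since $N(R)=0$), but it is not right Goldie: the right ideals $y^ixR$ for $i\ge 0$ form an infinite direct sum. So the Goldie/Lanski detour does not go through as written. The paper avoids this entirely by applying Levitzki's theorem \cite[Lemma 1.1]{Herstein69} once more, directly to the semiprime ring $R/{\mathbb P}(R)$: that result already gives that a semiprime ring of bounded index has no nonzero nil one-sided ideal, so $N_r(R/{\mathbb P}(R))=0$ and hence ${\mathbb P}(R)=N_r(R)$. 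Your subdirect decomposition into prime factors is unnecessary, and the step you flagged as ``genuinely non-routine'' is in fact the same Levitzki input you already invoked for local nilpotence.

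For part~$(iii)$ you take a genuinely different route: instead of arguing on the group side via Zelmanov's theorem (as the paper does: $N(R)^\circ$ is torsion-free, locally nilpotent and $m$-Engel, hence nilpotent), you tensor $N(R)$ with $\mathbb Q$ and apply Nagata--Higman to obtain associative nilpotence of $N(R)$ directly. This is correct and arguably cleaner, since it yields the ring-theoretic nilpotence of $N(R)$ stated in the proposition without passing through Jennings' correspondence. Parts~$(ii)$ and~$(iv)$ are essentially the paper's argument; your treatment of the $2$-torsion-free locally solvable case in $(ii)$ via the identity $[x,y]^n=0$ and Posner's theorem is more detailed than the paper's bare citation of Theorem~\ref{ZAZ}$(iii)$ and is a reasonable way to bridge from ``locally Lie solvable'' to the global conclusion.
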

\begin{proof}  $(i)$ Obviously, $ {\mathbb P}(R)\subseteq N_r(R)$ and so  $N_r(R/{\mathbb P}(R))=0$ in view of \cite[Lemma 1.1]{Herstein69}. Thus $ {\mathbb P}(R)=N_r(R)$. Since each  nonzero homomorphic image of ${\mathbb P}(R)$ contains a nonzero nilpotent ideal,  ${\mathbb P}(R)$ is locally nilpotent. If $S=\langle g_1,\ldots, g_n\rangle$ for $ g_1,\ldots, g_n\in N_r(R)$, then $S\subseteq (\langle g_1,\ldots, g_n\rangle_{rg})^\circ$. Hence $S$ is nilpotent and  ${\mathbb P}(R)^\circ$ is  locally nilpotent.

$(ii)$ {  Inasmuch as $C(R)\subseteq N(R)={\mathbb P}(R)$ by Corollary \ref{19} (Theorem \ref{ZAZ}$(iii)$, Lemma \ref{LF2}$(iv)$, respectively), the ideal $N(R)$ is locally nilpotent in view of \cite[Lemma 1.1]{Herstein69} and the result follows.}

$(iii)$ We get that  $C(R)\subseteq {\mathbb P}(R)=N(R)$ by Corollary \ref{PP} and $N(R)^\circ$ is a torsion-free locally nilpotent $m$-Engel group for some $m\in\mathbb{N}$ in view of the part $(i)$ and \cite[Main Theorem]{Amberg_Sysak_I}. Consequently, $N(R)^\circ$ is nilpotent in view of   \cite{Zelmanov89} and   $(R/N(R))^\circ$ is abelian.

$(iv)$ The quotient group $N(R)^\circ /F(N(R))^\circ$ is torsion-free locally nilpotent $m$-Engel  for some  $m\in\mathbb{N}$ and so it is nilpotent. Futhermore, $F(N(R))^\circ$ is locally nilpotent by \cite[Corollary 2]{Amberg_Dickenschied_Sysak}.
\end{proof}

Finally, we have also the following.

\begin{proposition} \label{PA1} If the set $N(R)$ of nilpotent elements of a ring  $R$ is finite  and $C(R)\subseteq N(R)$, then the following statements hold:
\begin{itemize}
\item[$(i)$] $R$ is an $FC$-ring (i.e., the centralizer $C_R(a)=\{ r\in R\mid ra=ar\}$ is of finite index in the additive group $R^+$ for any $a\in R$ \cite{Artemovych});
\item[$(ii)$] $R^\circ$ is a finite-by-abelian group with the finite commutator subgroup (i.e., $R^\circ$ is a $BFC$-group (see e.g. \cite{Robinson})).
\end{itemize}
\end{proposition}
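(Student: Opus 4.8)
The plan is to read off from the hypotheses that the commutator ideal $C(R)$ is a \emph{finite nil ideal} of $R$, and then to push this finiteness into the additive structure of $R$ (for $(i)$) and into the adjoint group $R^\circ$ (for $(ii)$).

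For $(i)$, fix $a\in R$ and consider the map $\varphi_a\colon R^+\to R^+$ defined by $\varphi_a(x)=[x,a]=xa-ax$. This is a homomorphism of abelian groups, its kernel is precisely the centralizer $C_R(a)$, and its image is contained in $[R,R]\subseteq C(R)\subseteq N(R)$, which is finite. Hence $R^+/C_R(a)\cong\im\varphi_a$ is finite, so $C_R(a)$ has finite index in $R^+$. As $a$ was arbitrary, $R$ is an $FC$-ring.

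For $(ii)$, first observe that every element of $C(R)\subseteq N(R)$ is nilpotent, hence quasi-invertible; therefore $C(R)\subseteq R^\circ$ and, being an ideal, $C(R)$ equipped with the circle operation is a normal subgroup of $R^\circ$. Since $C(R)$ is a (finite) nil, hence quasi-regular, ideal, the natural map $R^\circ\to(R/C(R))^\circ$, $a\mapsto a+C(R)$, is a surjective group homomorphism with kernel $C(R)$, so $R^\circ/C(R)\cong(R/C(R))^\circ$. But $R/C(R)$ is commutative, and for a commutative ring $S$ the circle operation is commutative ($a\circ b=a+b+ab=b\circ a$), so $(R/C(R))^\circ$ is abelian. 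Consequently $[R^\circ,R^\circ]\subseteq C(R)$, which is finite; thus $R^\circ$ is finite-by-abelian with finite commutator subgroup, i.e.\ a $BFC$-group.

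The only step that needs a little care is the surjectivity of $R^\circ\to(R/C(R))^\circ$: given $a\in R$ with $a+C(R)$ quasi-invertible modulo $C(R)$, one lifts a quasi-inverse to some $b\in R$ so that $a\circ b$ and $b\circ a$ lie in $C(R)$; composing $b$ with the circle inverse of $a\circ b$ (which exists inside $C(R)$) produces a right circle inverse of $a$ in $R$, and symmetrically a left one, whence $a\in R^\circ$. Apart from this bookkeeping I do not expect a genuine obstacle; the argument is essentially a finiteness transfer driven by $C(R)\subseteq N(R)$.
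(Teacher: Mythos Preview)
Your proof is correct and follows essentially the same approach as the paper. For $(i)$ both you and the paper use the inner derivation $\partial_a$ (your $\varphi_a$) and the first isomorphism theorem for abelian groups; for $(ii)$ the paper quotients by $N(R)$ rather than $C(R)$, but this is a minor variation---your choice of $C(R)$ is arguably cleaner since it is manifestly an ideal, and your added care about the surjectivity of $R^\circ\to (R/C(R))^\circ$ is not needed once one knows the standard fact that a quasi-regular ideal $I$ always yields $R^\circ/I^\circ\cong (R/I)^\circ$.
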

\begin{proof} $(i)$ The quotient group $R^+/\ker \partial_x$ is isomorphic to the image $\im \partial_x$ for any $x\in R$. Inasmuch as $\im \partial_x\subseteq C(R)$, we conclude that $R$ is an $FC$-ring.

$(ii)$ Since $N(R)^\circ$ is a finite normal subgroup of $R^\circ$ and $R^\circ /N(R)^\circ \cong (R/N(R))^\circ$, the assertion is true.
\end{proof}


\begin{thebibliography}{10}

\bibitem{Amberg_Dickenschied}
B.~Amberg and O.~Dickenschied.
\newblock On the adjoint group of a radical ring.
\newblock {\em Canad. Math. Bull.}, 38(3):262--270, 1995.

\bibitem{Amberg_Dickenschied_Sysak}
B.~Amberg, O.~Dickenschied, and Y.~P. Sysak.
\newblock Subgroups of the adjoint group of a radical ring.
\newblock {\em Canad. J. Math.}, 50(1):3--15, 1998.

\bibitem{Amberg_Sysak_II}
B.~Amberg and Y.~Sysak.
\newblock Associative rings whose adjoint semigroup is locally nilpotent.
\newblock {\em Arch. Math. (Basel)}, 76(6):426--435, 2001.

\bibitem{Amberg_Sysak_I}
B.~Amberg and Y.~P. Sysak.
\newblock Radical rings with {E}ngel conditions.
\newblock {\em J. Algebra}, 231(1):364--373, 2000.

\bibitem{Amberg_Sysak_III}
B.~Amberg and Y.~P. Sysak.
\newblock Radical rings with soluble adjoint groups.
\newblock {\em J. Algebra}, 247(2):692--702, 2002.

\bibitem{Amberg_Sysak_IV}
B.~Amberg and Y.~P. Sysak.
\newblock Associative rings with metabelian adjoint group.
\newblock {\em J. Algebra}, 277(2):456--473, 2004.

\bibitem{Artemovych}
O.~Artemovych.
\newblock {$FC$}-rings.
\newblock {\em Miskolc Math. Notes}, 18(2):623--637, 2017.

\bibitem{Azumaya}
G.~Azumaya.
\newblock Strongly {$\pi$}-regular rings.
\newblock {\em J. Fac. Sci. Hokkaido Univ. Ser. I.}, 13:34--39, 1954.

\bibitem{Badawi}
A.~Badawi.
\newblock On abelian {$\pi$}-regular rings.
\newblock {\em Comm. Algebra}, 25(4):1009--1021, 1997.

\bibitem{Bell_Klein}
H.~Bell and A.~Klein.
\newblock On rings with {E}ngel cycles.
\newblock {\em Canad. Math. Bull.}, 34(3):295--300, 1991.

\bibitem{Bell_Yaqub}
H.~Bell and A.~Yaqub.
\newblock On commutativity of semiperiodic rings.
\newblock {\em Results Math.}, 53(1-2):19--26, 2009.

\bibitem{Bokut_Lvov_Kharchenko}
L.~Bokut, I.~Lvov, and V.~Kharchenko.
\newblock Noncommutative rings.
\newblock In {\em Current problems in mathematics. {F}undamental directions,
  {V}ol. 18 ({R}ussian)}, Itogi Nauki i Tekhniki, pages 5--116. Akad. Nauk
  SSSR, Vsesoyuz. Inst. Nauchn. i Tekhn. Inform., Moscow, 1988.

\bibitem{Bovdi06}
A.~Bovdi.
\newblock Group algebras with an {E}ngel group of units.
\newblock {\em J. Aust. Math. Soc.}, 80(2):173--178, 2006.

\bibitem{BKh91}
A.~Bovdi and I.~Khripta.
\newblock The {E}ngel property of the multiplicative group of a group algebra.
\newblock {\em Mat. Sb.}, 182(1):130--144, 1991.

\bibitem{Bovdi20}
V.~Bovdi.
\newblock Group algebras whose unit group is locally nilpotent.
\newblock {\em J. Aust. Math. Soc.}, 109(1):17--23, 2020.

\bibitem{Carini}
L.~Carini.
\newblock Derivations on {L}ie ideals in semiprime rings.
\newblock {\em Rend. Circ. Mat. Palermo (2)}, 34(1):122--126, 1985.

\bibitem{Catino_Miccoli_Sysak}
F.~Catino, M.~Miccoli, and Y.~Sysak.
\newblock On the adjoint group of semiprime rings.
\newblock {\em Comm. Algebra}, 35(1):265--270, 2007.

\bibitem{Chuang_Lee}
C.-L. Chuang and T.-K. Lee.
\newblock Semiprime rings with prime ideals invariant under derivations.
\newblock {\em J. Algebra}, 302(1):305--312, 2006.

\bibitem{Chuang_Lin}
C.-L. Chuang and J.-S. Lin.
\newblock On a conjecture by {H}erstein.
\newblock {\em J. Algebra}, 126(1):119--138, 1989.

\bibitem{Cohen}
I.~S. Cohen.
\newblock Commutative rings with restricted minimum condition.
\newblock {\em Duke Math. J.}, 17:27--42, 1950.

\bibitem{Evstafev}
R.~Y. Evstafev.
\newblock On {A}rtinian rings satisfying the {E}ngel conditions.
\newblock {\em Ukra\"\i n. Mat. Zh.}, 58(9):1264--1270, 2006.

\bibitem{Faith2}
K.~Fe\u{\i}s.
\newblock {\em {Algebra}: {koltsa, moduli i kategorii}. {\rm {II}}}.
\newblock ``Mir'', Moscow, 1979.
\newblock Translated from the English by L. A. Ko\u{\i}fman, A. V. Mikhalev, T.
  S. Tolskaya, G. M. Tsukerman and M. P. Dorofeeva, Edited by L. A. Skornjakov.

\bibitem{Goodearl_Warfield}
K.~R. Goodearl and R.~B. Warfield, Jr.
\newblock Primitivity in differential operator rings.
\newblock {\em Math. Z.}, 180(4):503--523, 1982.

\bibitem{Gruenberg}
K.~Gruenberg.
\newblock Two theorems on {E}ngel groups.
\newblock {\em Proc. Cambridge Philos. Soc.}, 49:377--380, 1953.

\bibitem{Herstein63}
I.~Herstein.
\newblock A remark on rings and algebras.
\newblock {\em Michigan Math. J.}, 10:269--272, 1963.

\bibitem{Herstein69}
I.~Herstein.
\newblock {\em Topics in ring theory}.
\newblock The University of Chicago Press, Chicago, Ill.-London, 1969.

\bibitem{Herstein70}
I.~Herstein.
\newblock On the {L}ie structure of an associative ring.
\newblock {\em J. Algebra}, 14:561--571, 1970.

\bibitem{Herstein78}
I.~Herstein.
\newblock Multiplicative commutators in division rings.
\newblock {\em Israel J. Math.}, 31(2):180--188, 1978.

\bibitem{Herstein80}
I.~Herstein.
\newblock Multiplicative commutators in division rings. {II}.
\newblock {\em Rend. Circ. Mat. Palermo (2)}, 29(3):485--489 (1981), 1980.

\bibitem{Herstein_Small}
I.~Herstein and L.~Small.
\newblock Nil rings satisfying certain chain conditions.
\newblock {\em Canad. J. Math.}, 16:771--776, 1964.

\bibitem{Higgins54}
P.~Higgins.
\newblock Lie rings satisfying the {E}ngel condition.
\newblock {\em Proc. Cambridge Philos. Soc.}, 50:8--15, 1954.

\bibitem{Hirano78}
Y.~Hirano.
\newblock Some studies on strongly {$\pi $}-regular rings.
\newblock {\em Math. J. Okayama Univ.}, 20(2):141--149, 1978.

\bibitem{Hirano91}
Y.~Hirano.
\newblock On periodic {P}.{I}. rings and locally finite rings.
\newblock {\em Math. J. Okayama Univ.}, 33:115--120, 1991.

\bibitem{Huh_Kim_Lee04}
C.~Huh, N.~K. Kim, and Y.~Lee.
\newblock Examples of strongly {$\pi$}-regular rings.
\newblock {\em J. Pure Appl. Algebra}, 189(1-3):195--210, 2004.

\bibitem{Jennings47}
S.~Jennings.
\newblock Radical rings with nilpotent associated groups.
\newblock {\em Trans. Roy. Soc. Canada. Sect. III. (3)}, 49:31--38, 1955.

\bibitem{Kaplansky49}
I.~Kaplansky.
\newblock Elementary divisors and modules.
\newblock {\em Trans. Amer. Math. Soc.}, 66:464--491, 1949.

\bibitem{Kemer}
A.~Kemer.
\newblock Nonmatrix varieties.
\newblock {\em Algebra i Logika}, 19(3):255--283, 382, 1980.

\bibitem{Kim_Kwak_Lee}
N.~K. Kim, T.~K. Kwak, and Y.~Lee.
\newblock On a generalization of right duo rings.
\newblock {\em Bull. Korean Math. Soc.}, 53(3):925--942, 2016.

\bibitem{Klein_Bell_07}
A.~Klein and H.~Bell.
\newblock Rings with commuting nilpotents and zero divisors.
\newblock {\em Results Math.}, 51(1-2):73--85, 2007.

\bibitem{Krasilnikov}
A.~Krasilnikov.
\newblock On the group of units of a ring whose associated {L}ie ring is
  metabelian.
\newblock {\em Uspekhi Mat. Nauk}, 47(6(288)):217--218, 1992.

\bibitem{Krempa95}
J.~Krempa.
\newblock Rings with periodic unit groups.
\newblock In {\em Abelian groups and modules ({P}adova, 1994)}, volume 343 of
  {\em Math. Appl.}, pages 313--321. Kluwer Acad. Publ., Dordrecht, 1995.

\bibitem{Krempa99}
J.~Krempa.
\newblock Rings with periodic groups of units. {II}.
\newblock In {\em Groups {S}t. {A}ndrews 1997 in {B}ath, {II}}, volume 261 of
  {\em London Math. Soc. Lecture Note Ser.}, pages 503--511. Cambridge Univ.
  Press, Cambridge, 1999.

\bibitem{Lanski69}
C.~Lanski.
\newblock Nil subrings of {G}oldie rings are nilpotent.
\newblock {\em Canad. J. Math.}, 21:904--907, 1969.

\bibitem{Lanski97}
C.~Lanski.
\newblock An {E}ngel condition with derivation for left ideals.
\newblock {\em Proc. Amer. Math. Soc.}, 125(2):339--345, 1997.

\bibitem{Lewin}
J.~Lewin.
\newblock Subrings of finite index in finitely generated rings.
\newblock {\em J. Algebra}, 5:84--88, 1967.

\bibitem{McCoy}
N.~McCoy.
\newblock Prime ideals in general rings.
\newblock {\em Amer. J. Math.}, 71:823--833, 1949.

\bibitem{Picelli}
E.~Picelli.
\newblock Semilocal rings with {E}ngel conditions.
\newblock {\em Arch. Math. (Basel)}, 87(4):289--294, 2006.

\bibitem{Reyes}
M.~Reyes.
\newblock Noncommutative generalizations of theorems of {C}ohen and
  {K}aplansky.
\newblock {\em Algebr. Represent. Theory}, 15(5):933--975, 2012.

\bibitem{Riley98}
D.~Riley.
\newblock Algebras with collapsing monomials.
\newblock {\em Bull. London Math. Soc.}, 30(5):521--528, 1998.

\bibitem{Riley00}
D.~Riley.
\newblock Generalised nilpotence conditions in {$n$}-{E}ngel {L}ie algebras.
\newblock {\em Comm. Algebra}, 28(10):4619--4634, 2000.

\bibitem{Riley_Wilson}
D.~M. Riley and M.~C. Wilson.
\newblock Associative rings satisfying the {E}ngel condition.
\newblock {\em Proc. Amer. Math. Soc.}, 127(4):973--976, 1999.

\bibitem{Robinson}
D.~Robinson.
\newblock {\em A course in the theory of groups}, volume~80 of {\em Graduate
  Texts in Mathematics}.
\newblock Springer-Verlag, New York-Berlin, 1982.

\bibitem{Rowen}
L.~Rowen.
\newblock {\em Ring theory. {V}ol. {II}}, volume 128 of {\em Pure and Applied
  Mathematics}.
\newblock Academic Press, Inc., Boston, MA, 1988.

\bibitem{Shalev}
A.~Shalev.
\newblock On associative algebras satisfying the {E}ngel condition.
\newblock {\em Israel J. Math.}, 67(3):287--290, 1989.

\bibitem{Sharma_Srivastava85}
R.~Sharma and J.~Srivastava.
\newblock Lie solvable rings.
\newblock {\em Proc. Amer. Math. Soc.}, 94(1):1--8, 1985.

\bibitem{Sharma_Srivastava92}
R.~Sharma and J.~Srivastava.
\newblock Lie centrally metabelian group rings.
\newblock {\em J. Algebra}, 151(2):476--486, 1992.

\bibitem{Shlyafer}
A.~Shlyafer.
\newblock Periodic multiplicative subgroups of rings and algebras.
\newblock In {\em Abelian groups and modules, {N}o. 9 ({R}ussian)}, pages
  136--152, 159. Tomsk. Gos. Univ., Tomsk, 1990.

\bibitem{Smoktunowicz}
A.~Smoktunowicz.
\newblock On {E}ngel groups, nilpotent groups, rings, braces and the
  {Y}ang-{B}axter equation.
\newblock {\em Trans. Amer. Math. Soc.}, 370(9):6535--6564, 2018.

\bibitem{Smoktunowicz_Puczylowski}
A.~Smoktunowicz and E.~Puczy\l~owski.
\newblock A polynomial ring that is {J}acobson radical and not nil.
\newblock {\em Israel J. Math.}, 124:317--325, 2001.

\bibitem{Sozutov_Alexandrova}
A.~Sozutov and I.~Alexandrova.
\newblock On some properties of adjoint groups of associative nil algebras.
\newblock {\em Zh. Sib. Fed. Univ. Mat. Fiz.}, 10(4):503--508, 2017.

\bibitem{Stolz}
B.~Stolz.
\newblock Nilpotenzbedingungen f\"ur radikale und lokale rings.
\newblock {\em Diplomarbeit Johannes Gutenberg-Universit\"at Mainz}, pages
  1--30, 2002.

\bibitem{Sysak01}
Y.~Sysak.
\newblock Periodic subgroups of the adjoint group of a radical ring.
\newblock In {\em Topics in infinite groups}, volume~8 of {\em Quad. Mat.},
  pages 327--333. Dept. Math., Seconda Univ. Napoli, Caserta, 2001.

\bibitem{Sysak10}
Y.~P. Sysak.
\newblock The adjoint group of radical rings and related questions.
\newblock In {\em Ischia group theory 2010}, pages 344--365. World Sci. Publ.,
  Hackensack, NJ, 2012.

\bibitem{Szigeti98}
J.~Szigeti.
\newblock Idempotent ideals in {L}ie nilpotent rings.
\newblock In {\em Methods in ring theory ({L}evico {T}erme, 1997)}, volume 198
  of {\em Lecture Notes in Pure and Appl. Math.}, pages 287--292. Dekker, New
  York, 1998.

\bibitem{Szigeti_Wyk}
J.~Szigeti and L.~Van~Wyk.
\newblock On {L}ie nilpotent rings and {C}ohen's theorem.
\newblock {\em Comm. Algebra}, 43(11):4783--4796, 2015.

\bibitem{Tuganbaev02}
A.~Tuganbaev.
\newblock Semiregular, weakly regular, and {$\pi$}-regular rings.
\newblock {\em J. Math. Sci. (New York)}, 109(3):1509--1588, 2002.
\newblock Algebra, 16.

\bibitem{Ster}
J.~\v{S}ter.
\newblock Rings in which nilpotents form a subring.
\newblock {\em Carpathian J. Math.}, 32(2):251--258, 2016.

\bibitem{Zalesskii65}
A.~Zalesskii.
\newblock Solvable groups and crossed products.
\newblock {\em Mat. Sb. (N.S.)}, 67 (109):154--160, 1965.

\bibitem{Zalesskii_Smirnov_82}
A.~Zalesskii and M.~Smirnov.
\newblock Associative rings satisfying the identity of {L}ie solvability.
\newblock {\em Vests\=\i \ Akad. Navuk BSSR Ser. F\=\i z.-Mat. Navuk},
  (2):15--20, 123, 1982.

\bibitem{Zelmanov87}
E.~Zelmanov.
\newblock Engel {L}ie algebras.
\newblock {\em Dokl. Akad. Nauk SSSR}, 292(2):265--268, 1987.

\bibitem{Zelmanov89}
E.~Zelmanov.
\newblock Some problems in the theory of groups and {L}ie algebras.
\newblock {\em Mat. Sb.}, 180(2):159--167, 1989.

\bibitem{Zelmanov90}
E.~Zelmanov.
\newblock Solution of the restricted {B}urnside problem for groups of odd
  exponent.
\newblock {\em Izv. Akad. Nauk SSSR Ser. Mat.}, 54(1):42--59, 221, 1990.

\end{thebibliography}
\end{document}